\patchcmd{\subsubsection}{\scshape}{\bf}{}{}
\renewcommand{\tocsection}[3]{%
  \indentlabel{\@ifnotempty{#2}{\bfseries\ignorespaces#1 #2\quad}}\bfseries#3}
\renewcommand{\tocsubsection}[3]{%
  \indentlabel{\@ifnotempty{#2}{\ignorespaces#1 #2\quad}}#3}
\renewcommand{\tocsubsubsection}[3]{%
  \indentlabel{\@ifnotempty{#2}{\hspace{1.4cm}\ignorespaces#1 #2\quad}}#3}
\pretocmd{\chapter}{\addtocontents{toc}{\protect\addvspace{15\p@}}}{}{}
\pretocmd{\section}{\addtocontents{toc}{\protect\addvspace{5\p@}}}{}{}
\newcommand\@dotsep{4.5}
\def\@tocline#1#2#3#4#5#6#7{\relax
  \ifnum #1>\c@tocdepth 
  \else
    \par \addpenalty\@secpenalty\addvspace{#2}%
    \begingroup \hyphenpenalty\@M
    \@ifempty{#4}{%
      \@tempdima\csname r@tocindent\number#1\endcsname\relax
    }{%
      \@tempdima#4\relax
    }%
    \parindent\z@ \leftskip#3\relax \advance\leftskip\@tempdima\relax
    \rightskip\@pnumwidth plus1em \parfillskip-\@pnumwidth
    #5\leavevmode\hskip-\@tempdima{#6}\nobreak
    \leaders\hbox{$\m@th\mkern \@dotsep mu\hbox{.}\mkern \@dotsep mu$}\hfill
    \nobreak
    \hbox to\@pnumwidth{\@tocpagenum{\ifnum#1=1\bfseries\fi#7}}\par
    \nobreak
    \endgroup
  \fi}
\renewcommand\csname r@tocindent0\endcsname{0pt}
\def\l@subsection{\@tocline{2}{0pt}{2.5pc}{5pc}{}}
\newcommand{\Addresses}{{

  \bigskip
  \footnotesize

	\medskip
	\textsc{Instituto de Ciencias Matem\'aticas (CSIC), C. Nicol\'as Cabrera, 13-15, 28049 Madrid, Spain}\par\nopagebreak
  \textit{E-mail address}: \qquad \texttt{angel\textunderscore castro@icmat.es},\quad  \texttt{dcg@icmat.es},\quad \texttt{daniel.lear@icmat.es}  }}
\newtheorem{thm}{Theorem}[section]
\newtheorem{cor}[thm]{Corollary}
\newtheorem*{prop*}{Proposition}
\newtheorem{lemma}[thm]{Lemma}
\newcommand{\R}{\mathbb{R}}
\newcommand{\Z}{\Bbb{Z}}
\newcommand{\N}{\Bbb{N}}
\newcommand{\pa}{\partial}
\newcommand{\Pm}{\mathbb{P}_m}
\newcommand{\psim}{\psi^{[m]}}
\begin{document}

\title[Global Existence for the Confined IPM Equation]{\textsc{Global Existence of Quasi-Stratified Solutions \\for the Confined IPM Equation}}



\author{\'Angel Castro, Diego C\'ordoba and Daniel Lear}

\date{\today}

\maketitle

\begin{abstract}
In this paper, we consider a confined physical scenario to prove global existence of smooth solutions with bounded density and finite energy for the inviscid incompressible porous media (IPM) equation. The result is proved using the stability of stratified solutions, combined with an additional structure of our initial perturbation, which allows us to get rid of the boundary terms in the energy estimates. \\
\end{abstract}


\tableofcontents

\section{Introduction}\label{Sec_1}
In this paper we study the global in time existence of smooth solutions with bounded density and finite energy of the (2D) Incompressible Porous Media equation in a strip domain $\Omega$. That is, we consider the following active scalar equation:
$$\partial_t \varrho+\textbf{u}\cdot\nabla\varrho=0$$
with a velocity field $\mathbf{u}$ satisfying the momentum equation given by Darcy's law:
\begin{equation}\label{def_u}
\frac{\mu}{\kappa}\mathbf{u}=-\nabla p-g(0,\varrho),
\end{equation}
where $(\mathbf{x},t)\in\Omega\times\R^{+}$, $\mathbf{u}=(u_1,u_2)$ is the incompressible velocity (i.e., $\nabla\cdot \mathbf{u}=0$), $p$ is the pressure, $\mu$ is the dynamic viscosity, $\kappa$ is the permeability of the isotropic medium, $g$ is the acceleration due to gravity and $\varrho$ corresponds to the density transported without diffusion by the fluid.

Due to the direction of gravity, the horizontal and the vertical coordinates play different roles.  Here we assume spatial periodicity in the horizontal space variable, says $\varrho(x+2\pi k,y,t)=\varrho(x,y,t)$ and similarly $p(x+2\pi k,y,t)=p(x,y,t)$.  Finally, as these equations are studied on a bounded domain, we assume that our physical domain is impermeable, which
is exactly satisfied if $\mathbf{u}$ satisfies the no-slip boundary condition
\begin{equation}\label{Boundary_Conditions}
\mathbf{u}\cdot \mathbf{n}=0\qquad \text{on $\partial\Omega$,}
\end{equation}
where $\mathbf{n}$ denotes the exterior normal vector.

In this work we will focus on the case in which the evolution problem is posed on a porous strip with width $2l$. That is, the domain is the two-dimensional flat strip $\Omega:=\mathbb{T}\times[-l,l]$ with $0<l<\infty$.

This problem is known as the \emph{confined} IPM equation. Without loss of generality we will assume from now on that $\mu=\kappa=g=l=1$. To summarize, we have the following system of equations in $\Omega$:
\begin{equation}\label{System_A}
\left\{
\begin{array}{rl}
\partial_t\varrho +\mathbf{u}\cdot\nabla \varrho &= 0  \\
\mathbf{u}&=-\nabla p -(0,\varrho)\\
\nabla\cdot\mathbf{u}&=0 \\
\end{array}
\right.
\end{equation}
with the boundary condition $\mathbf{u}\cdot \mathbf{n}=0$ on $\partial\Omega\equiv\{y=\pm 1\}$. In our case, this implies that $u_2|_{\partial\Omega}=0$. In our physical system where there is gravity and stratification ($\mathbf{u}=0$ and $\varrho\equiv\varrho(y)$ is a stationary solution), vertical movement may be penalized while horizontal movement is not. This opens up the possibility of treating the corresponding initial value problem from a perturbative point of view.
As in \cite{Elgindi}, this paper studies the solutions of (\ref{System_A}) in the perturbative regime near the stratified state $\Theta(y):=-y$ for a specific type of perturbations:
\begin{equation}\label{perturbation}
\varrho(x,y,t)=\Theta(y)+\rho(x,y,t) \qquad (\mathbf{x},t)\in\Omega\times\R^{+}.
\end{equation}
The main result is that small perturbations $\rho$ in a suitable Sobolev space $X^k(\Omega)$, which we define below in (\ref{Space_X}), converge to a shear and nearby stationary flow in the sense: $\varrho(x,y,t)\equiv \Theta(y)+\rho(x,y,t)\to \Theta(y)+\rho_{\infty}(y)$ and $\mathbf{u}(x,y,t)\to 0$ as $t\to \infty$. The main mechanism of decay can be seen from the linearized equation:
$$\partial_t\rho(x,y,t)=-\Theta'(y)\,u_2(x,y,t)$$
which, after solving the velocity $\mathbf{u}=(u_1,u_2)$ in terms of $\rho$ yields
$$\partial_t\rho(x,y,t)=\Theta'(y) \left(\rho(x,y,t)+(-\Delta_{\Omega})^{-1}\partial_y^2\rho(x,y,t) \right).$$
Setting $\Theta(y):=-y$, the previous equation clearly shows the frequency dependent exponential decay over time of $\rho$, except the zero mode in $x$.
The goal of the present paper is to show how to control the nonlinearity, so that it would not destroy the decay provided by the linearized equation.

To do this, controlling the boundary terms  is the new additional difficulty. This  can be done by working with perturbations in the appropriate Sobolev space $X^{k}(\Omega)$. Using standard techniques, we will prove local in time existence of solutions for the perturbated problem in the space $X^k(\Omega)$. For sake of completeness we include the proof, where the cornerstone will be the properties of an orthonormal basis adapted to $X^k(\Omega)$. The reason for working with initial perturbations with that additional structure will be seen in the apriori energy estimates. There, all the boundary terms that \hyphenation{appear} appear in the computations vanish thanks to \hyphenation {pe-rio-di-ci-ty} periodicity in the horizontal variable and by the additional structure of our initial perturbations, which is preserved in time by the local existence result, as long as the solution exists.\\

Namely, we will prove the following result:

\noindent
\textbf{Theorem}
\emph{The stratified state $\Theta$ of the confined IPM equation is asymptotically stable in $X^{\kappa}(\Omega)$ for $\kappa\geq 10$. In other words, there exists $\varepsilon_0>0$ such that if we solve (\ref{System_A}) with initial data $\varrho(0)=\Theta+\rho(0)$ and $\rho(0)\in X^{\kappa}(\Omega)$ with $||\rho||_{H^{\kappa}(\Omega)}(0)\leq \epsilon_0$ then, the solution exists globally in time and satisfies:
\begin{enumerate}
	\item $||\mathbf{u}||_{H^{3}(\Omega)}(t)\lesssim \epsilon_0 \,(1+t)^{-\frac{5}{4}}$,
	\item $||\bar{\varrho}||_{H^{3}(\Omega)}(t)\lesssim \epsilon_0\,(1+t)^{-\frac{5}{4}}$,
	\item $||\tilde{\varrho}-\Theta||_{H^{\kappa}(\Omega)}(t)\leq 2\,\varepsilon$.
\end{enumerate}
where $\varrho:=\bar{\varrho}+\tilde{\varrho}$ such that $\bar{\varrho}\perp\tilde{\varrho}$ and $\bar{\varrho}$ is given by the projection operator onto the subspace of functions with zero average in the horizontal variable.\\
}

\noindent
\textbf{Remark:} If we perturb the stratified state by a function of $y$ only then there should be no decay. For this reason, the orthogonal decomposition $\varrho=\bar{\varrho}+\tilde{\varrho}$ will be considered.
\\

\noindent
\textbf{Remark:} The strategy used in our paper can be applied to a more general class of monotone shear flows.  The  proof works for small perturbations in some sense of our steady state with $\Theta'<0$. However, a highly non trivial problem is to extend this to the case of possibly degenerate shear flows where $\Theta'=0$ at some value.\\


A more precise statement of our result is presented as Theorem (\ref{main_thm}), where we
also illustrate its proof through a bootstrap argument. Despite the apparent simplicity, understanding the stability of this flow is far from been trivial.

\subsection{Motivation}
The study of partial differential equations arising in fluid mechanics has been an
active field in the past century, but many important and physically relevant questions  remain  wide  open  from  the  point  of  view  of  mathematical  analysis. Among the problems that attracted recently renewed interest, \textit{active scalar} equations that arise in fluid dynamics present a challenging set of problems in PDE. Maybe
the best example is the Surface Quasi-Geostrophic equation (SQG), introduced in the mathematical literature
in \cite{Constantin-Majda-Tabak}. The inviscid SQG equation in $\R^2$ takes the form:
\begin{equation*}
\begin{cases}
\partial_t \theta +\mathbf{u}\cdot\nabla \theta=0\\
\mathbf{u}=\mathbf{R}^{\perp}\theta\\
\end{cases}
\end{equation*}
where $\mathbf{R}=(R_1,R_2)$ denote the 2D Riesz transforms. This problem has been widely investigated due to their mathematical analogies with the 3D Euler equation, but little is known. Local well-posedness and
regularity criteria in various functional settings have been established, see  \cite{Cordoba-Castro-Gomez-Serrano} as a survey. The global regularity problem for the Cauchy problem with a general smooth initial data remains open. Besides radially symmetric solutions, which are all stationary, the first examples of non-trivial global smooth solutions we are aware of were recently provided in \cite{Cordoba-Castro-Gomez-Serrano}. An alternative construction of smooth families of global special solutions can be found in \cite{Gravejat-Smets}, where the authors focus on travelling-wave solutions to the inviscid SQG. On the other hand, whether finite time blow up  can happen for smooth initial data remains completely open.

It is important to note that, both IPM and SQG, the operator relating the velocity and the active scalar is a \textit{singular integral operator} of  zero order. Even more, in the whole space, the velocity (\ref{def_u}) can be rewritten in a more convenient way as $\mathbf{u}=\mathbf{R}^{\perp}R_1\varrho$. Despite the fact that there are great similarities between the inviscid versions of SQG and IPM equations, there are also important differences.
This work appears to be the first to find an scenario to prove global existence  of smooth solutions with bounded density and finite energy for the inviscid IPM equation.

\subsubsection{The question of long-time behavior}
A fundamental challenge in mathematical physics is to understand the dynamics  of  physical  systems  as  they  evolve  over long times. This is particularly true when it comes to the study of the long-time behavior of such systems without dissipation. Depending upon the specific physical situation that a given fluid equation models, we find vastly different mathematical objects arising. In recent years, researchers have discovered numerous interesting phenomena such as the existence of solutions whose long-time behavior is determined entirely by:
\begin{itemize}
	\item some linear or dispersive effect, for example in water waves \cite{Germain-Masmoudi-Shatah}, \cite{Ionescu-Pusateri} and \cite{Wu}.
	\item some linear mixing effect, for the Couette flow in Navier-Stokes and Euler equations \cite{Bedrossian-Germain-Masmoudi}, \cite{Bedrossian-Masmoudi}.
	\item some hypocoercive dissipative mechanisms, for kinetic theory \cite{Desvillettes-Villani_1} and \cite{Desvillettes-Villani_2}.
\end{itemize}

The idea of taking a non-linear equation where global well-posedness is unknown and to prove it for a perturbation ``close'' to a stationary solution of the equation is so natural. For small enough initial data, one might conjecture that solutions to the nonlinear problem behave asymptotically like solutions of the corresponding linear problem.

As in \cite{Elgindi}, where the author gives in $\R^2$ the first construction of a non-trivial global smooth solution for the inviscid IPM equation, the main idea is that stratification can be a stabilizing force. One can imagine that a fluid with density that is proportional to depth is in some sense ``stable''. The mechanism behind the stability is that the linearized IPM equation around the stratified state exhibit certain damping properties.  This convergence back to equilibrium, despite the lack of dissipative mecha\-nisms, is known as \textit{inviscid damping} and is a close relative of Landau damping in plasma physics.  It was proved that Landau damping provides a similar stability for Vlasov-Poisson in Mouhot and Villani's breakthrough work \cite{Mouhot-Villani}.

\subsubsection{Previous results for IPM with smooth initial data}
In {\cite{Cordoba-Gancedo-Orive}, the local existence and uniqueness in H\"older space $C^{\delta}$ with $0<\delta<1$ was shown by the particle-trajectory method for the whole space case. By a similar approach, the local well-posedness  in Besov and Triebel-Lizorkin spaces was proved in \cite{Xue} and \cite{Yu-He}.

For the Lagrangian formulation, in \cite{Constantin-Vicol-Wu}, the  authors show that as long as the solution of this equation is in a class of regularity that assures H\"older continuous gradients of the velocity, the corresponding Lagrangian paths are real analytic functions
of time.

In the class of weaker solutions, the results of \cite{Cordoba-Faraco-Gancedo} and \cite{Shvydkoy} establish the non-uniqueness of $L_{t,x}^{\infty}$ weak solutions to the inviscid IPM equation starting from the zero solution. Recently, in \cite{Isett-Vicol} the authors were able to construct global weak solutions to the inviscid IPM equation which are of class $C_{t,x}^{\delta}$ with $\delta<1/9$ starting from a smooth initial data. All these works are based on a variant of the method of convex integration.

In the direction of classical solutions, the only result known is due to Elgindi \cite{Elgindi} shows that solutions which are ``close'' to certain stable stratified solutions exists globally in time, but since he works in the whole space, such solutions have unbounded density. He considers perturbations in two settings which are fundamentally different:
\begin{itemize}
	\item On the whole space $\R^2$: In this case the stationary solution does not belong to $L^2(\R^2)$. However, the author can perturb the stationary solution by a sufficiently small $H^s$ function, and to prove that the perturbation decay to equilibrium as $t\to +\infty$.

	\item On the two dimensional torus $\mathbb{T}^2$: Similarly, the stationary solution is not periodic but the author may perturb it by a periodic function and once more the perturbation will remain periodic. The result here is quite different for the main reason that $\varrho$ itself does not decay. Even so, smooth perturbations of the stationary solution are stable for all time in Sobolev spaces.
\end{itemize}

\noindent
We now motivate our attack setting. We start with the observation that gravity term in Darcy's law (\ref{def_u}) convert IPM in an anisotropic problem, which implies different properties in different directions. In our case, the vertical direction pointing in the direction of gravity will play a key role. By this anisotropic property, it seems natural that  $\mathbb{T}\times[-1,1]$ might be an adequate scenario to set our equations.\\

In order to solve our problem in the bounded domain $\Omega$, in certain Sobolev space, we have to overcome the following new difficulties:
\begin{enumerate}
	\item[i)] To be able to handle the boundary terms that appear in the computations.
	
	\item[ii)] The lack of higher order boundary conditions at the boundaries, due to the fact that we work in Sobolev spaces.
\end{enumerate}

\noindent
Indeed, both difficulties i) and ii) can be bypass if our initial perturbation has a special structure. We introduce the following spaces to characterize our initial data:
\begin{align}
X^k(\Omega)&:=\{f\in H^{k}(\Omega):\partial_y^{n}f|_{\partial\Omega}=0 \quad \text{for } n=0,2,4,\ldots,k^{\star}\}\label{Space_X},\\
Y^k(\Omega)&:=\{f\in H^{k}(\Omega):\partial_y^{n}f|_{\partial\Omega}=0 \quad \text{for } n=1,3,5,\ldots,k_{\star}\}\label{Space_Y}
\end{align}
where, we defined the auxiliary values of $k^{\star}$ and $k_{\star}$ as follows:
$$k^{\star}:= \begin{cases}
k-2 \qquad k \quad \text{even}\\
k-1 \qquad k \quad \text{odd}
\end{cases} \qquad \text{and }  \qquad k_{\star}:= \begin{cases}
k-1 \qquad k \quad \text{even}\,\\
k-2 \qquad k \quad \text{odd}.
\end{cases} $$
Lastly, we remember that the Trace operator $T:H^{1}(\Omega)\rightarrow L^2(\partial \Omega)$ defined by $T[f]:=f|_{\partial \Omega}$ is bounded for all $f\in H^{1}(\Omega)$. Consequently, both spaces are well defined.

\subsection{The equations}
In this section, we describe the equation that a perturbation of the stratified solution (\ref{perturbation}) must satisfy. In order to prove our goal, we plug into the system \eqref{System_A} the following ansatz:
\begin{align*}
\varrho(x,y,t)&= -y+\rho(x,y,t),\\
p(x,y,t)&= \Pi(x,y,t) -\tfrac{1}{2}y^2+\int_{0}^y \tilde{\rho}(y',t)dy'
\end{align*}
where, for a general function $f:\Omega\times\R^{+}\rightarrow \R$, we define
\begin{equation*}
\tilde{f}(y,t):=\frac{1}{2\pi}\int_{-\pi}^\pi f(x',y,t)dx'\quad \text{and} \quad \bar{f}(x,y,t):=f(x,y,t)-\tilde{f}(y,t).
\end{equation*}

Then, for the perturbation $\rho$, we obtain the system
\begin{equation}\label{System_A_PI}
\left\{
\begin{array}{rl}
\partial_t\rho +\mathbf{u}\cdot\nabla \rho &=  u_2  \\
\mathbf{u}&=-\nabla \Pi -(0,\bar{\rho})\\
\nabla\cdot\mathbf{u}&=0
\end{array}
\right.
\end{equation}
besides the boundary condition $\mathbf{u}\cdot \mathbf{n}=0$ on $\partial\Omega$. Note that in $\Omega$, our perturbation $\rho$ does not have to decay in time. Indeed, if we perturb the stationary solution  by a function of $y$ only there is no decay. More specifically, $\rho\equiv\rho(y)$ and $u=0$ is a stationary solution of \eqref{System_A_PI}. To overcome this difficulty, the orthogonal decomposition $\rho= \bar{\rho}+\tilde{\rho}$ will be considered.

The system (\ref{System_A_PI}) can be rewritten in terms of $\bar{\rho}$ and $\tilde{\rho}$ as follows:
\begin{equation}\label{System_B}
\left\{
\begin{array}{rl}
\partial_t \bar{\rho}+\overline{\textbf{u}\cdot \nabla \bar{\rho}}+\partial_y\tilde{\rho} \,u_2&=u_2\\
\partial_t \tilde{\rho}+ \widetilde{\textbf{u}\cdot \nabla \bar{\rho}}\hspace{1.4 cm }&=0\\
\textbf{u}&=-\nabla\Pi-(0,\bar{\rho})\\
\nabla\cdot\mathbf{u}&=0
\end{array}
\right.
\end{equation}
Notice that $\tilde{\rho}$ is always a function of $y$ only and $\bar{\rho}$ has zero average in the horizontal variable. It is expected that $\bar{\rho}$ will decay on time and $\tilde{\rho}$ will just remain bounded.
The systems (\ref{System_A_PI}) and (\ref{System_B}) are the same, but depending on what we need, we will work with one or the other.

\subsection{Notation \& Organization}
We shall denote by $(f,g)$ the $L^2(\Omega)$ inner product of $f$ and $g$.
As usual, we use bold for vectors valued functions. Let $\mathbf{u}=(u_1,u_2)$ and $\mathbf{v}=(v_1,v_2)$, we define
$\langle \mathbf{u},\mathbf{v}\rangle=(u_1,v_1)+(u_2,v_2)$. Also, we remember that the natural norm in Sobolev spaces is defined by:
$$||f||_{H^{k}(\Omega)}^2:=||f||_{L^2(\Omega)}^2+||f||_{\dot{H}^{k}(\Omega)}^2, \qquad ||f||_{\dot{H}^{k}(\Omega)}^2:=||\partial^{k} f||_{L^2(\Omega)}^2.$$
For convenience, in some place of this paper, we may use $L^2,\dot{H}^k $ and $H^k$ to stand for $L^2(\Omega), \dot{H}^k(\Omega)$ and $H^{k}(\Omega)$, respectively. Moreover, to avoid clutter in computations, function arguments (time and space) will be omitted whenever they are obvious from context. Finally, we use the notation $f\lesssim g$ when there exists a constant $C>0$ independent of the parameters of interest such that $f\leq Cg$.\\

\noindent
\textbf{Organization of the paper:}
In Section \ref{Sec_2}, we introduce the functional spaces $X^k(\Omega)$ and $Y^k(\Omega)$ where we will work. The key point of working with initial perturbations with the structure given by these spaces it is showed in Section \ref{Sec_3}. Section \ref{Sec_4} contains the proof of the local existence in time for initial data  in $X^k(\Omega)$ for the \textit{confined} problem, together with a blow-up criterion. The core of the article is the proof of the main theorem in Section \ref{Sec_5}. We commence by the \textit{a priori} energy estimates given in Section \ref{Sec_5.1}. This is followed by an explanation of the decay given by the linear semigroup of our system in Section \ref{Sec_5.2}. Finally, in Section \ref{Sec_5.3} we exploit a bootstrapping argument to prove our theorem.

\section{Mathematical setting and preliminares}\label{Sec_2}

In this section, we will see the importance of working with initial perturbations belonging to $X^k(\Omega)$.
We also consider an  adapted orthonormal basis for working with these perturbations, together with their eigenfunction expansion.

\subsection{Motivation of the spaces $X^k(\Omega)$ and $Y^k(\Omega)$.}

By the no-slip condition $u_2(t)|_{\partial\Omega}=0$, the solution $\rho(t)$ of $(\ref{System_A_PI})$ satisfies  the following transport equation on the boundary:
\begin{equation}\label{transport_eq}
\partial_t\rho(t)|_{\partial\Omega}+u_1(t)\partial_x\rho(t)|_{\partial\Omega}=0.
\end{equation}
As our objective is to obtain global stability and decay to equilibrium of sufficiently small perturbations, it seems natural to consider $\rho(0)|_{\partial\Omega}=0$. Then, by the transport character of (\ref{transport_eq}) the initial condition is preserved in time $\rho(t)|_{\partial\Omega}=0$ as long as the solution exists. In addition, taking derivatives in Darcy's law, using the incompressibility condition, and restricting to the boundary we have
\begin{align}\label{anulacion}
\partial_y u_1(t)|_{\partial\Omega}=0\quad\text{and}\quad
\partial_y^{2}u_2(t)|_{\partial\Omega}=0,
\end{align}
given that $\rho(t)|_{\partial\Omega}=u_2(t)|_{\partial\Omega}=0$. Relations \eqref{anulacion}  give rise  to the following equation for the  derivative in time of  $\partial^2_y\rho(t)$ at the boundary:
\begin{align*}
\partial_t\partial_y^{2}\rho(t)|_{\partial\Omega}= -u_1(t)\partial_x(\partial_y^2\rho)(t)|_{\partial\Omega}-\partial_y u_2(t) \partial^2_y\rho(t)|_{\partial\Omega}.
\end{align*}
Thus, we find that $\partial^2_y\rho(0)|_{\partial\Omega}=0$ implies that $\partial_t\partial^2_y\rho(t)|_{\partial\Omega}=0$, and consequently the condition on the boundary is preserved in time.

Iterating this procedure we can check that  the conditions $\partial_y^{n}\rho(0)|_{\partial\Omega}=0$,  for $n=2,4,...$ are preserved  in time. This is the reason why we can look for solutions $\rho(t)$ in the space $X^k(\Omega)$, if the initial data belongs to it. Moreover $u_1(t)$ will belong to $Y^k(\Omega)$ and $u_2(t)$ will belong to $X^k(\Omega)$.

\subsection{Biot-Savart law and stream formulation}
In the whole space $\R^2$ we have a simple expression for $\nabla\Pi$ in terms of $\bar{\rho}$: $$\nabla\Pi=\nabla(-\Delta)^{-1}\partial_y\bar{\rho}$$ so, we can write the velocity in terms of $\bar{\rho}$ as
$$\mathbf{u}=-\nabla \Pi-(0,\bar{\rho})=\mathbf{R}^{\perp}R_1\bar{\rho}$$
where $\mathbf{R}^{\perp}=(-R_2,R_1)$, being $R_i$ the Riesz's transform.

In our setting $\Omega=\mathbb{T}\times[-1,1]$, to obtain an analogous expression we proceed as follow. Due to the incompressibility of the flow, by taking the divergence of Darcy's law we find that
\begin{equation}\label{Delta_PI}
\Delta \Pi= -\partial_y\bar{\rho}.
\end{equation}
Moreover, the no-slip conditon (\ref{Boundary_Conditions}) give us the following boundary condition:
\begin{equation}\label{PI_boundary}
\partial_y \Pi|_{\partial\Omega}=-\bar{\rho}|_{\partial\Omega}=0,
\end{equation}
which vanishes as  $\rho\in X^k(\Omega)$. Then, putting together (\ref{Delta_PI}) and (\ref{PI_boundary}) (notice that we look for a periodic in the $x$-variable $\Pi$), we recover the velocity field, in terms of $\bar{\rho}$, by the expression $\mathbf{u}=-\nabla\Pi-(0,\bar{\rho})$.\\

Other way to reach this expression follows the next steps: as $\nabla\cdot\mathbf{u}=0$,  we can write the velocity as the gradient perpendicular of a \textit{stream function} $\psi$, i.e.,
\begin{equation}\label{Stream_1}
\mathbf{u}=\nabla^{\perp}\psi
\end{equation}
with $\nabla^{\perp}\equiv (-\partial_y,\partial_x)$.  Then, applying the   $\emph{curl}$ operator on (\ref{def_u}), we get the Poisson equation for $\psi$:
\begin{equation*}
\Delta\psi=-\partial_x\bar{\rho}.
\end{equation*}
  Taking in account (\ref{Stream_1}) and the no-slip condition (\ref{Boundary_Conditions}) we obtain  the boundary condition:
\begin{equation*}
\partial_x\psi|_{\partial\Omega}=0.
\end{equation*}
Thus, we need to impose $\psi|_{\{y=\pm 1\}}=c_{\pm}$ where $c_{+}$ could be, in principle, different from $c_{-}$. However, the periodicity in the $x$-variable of $\Pi$ forces to take $c_{+}=c_{-}$, and since we are only interested in the derivatives of $\psi$ we will take $c_{\pm}=0$.\\

To sum up, in order to close the system of equations,  we first solve either
\begin{equation*}
\left\{
\begin{array}{rlrr}
\Delta \Pi&= -\partial_y\bar{\rho}   \qquad &\text{in}\quad &\Omega,\\
\partial_y \Pi&=0  \hspace{1.1 cm} &\text{on} \quad &\partial\Omega,
\end{array}
\right.
\end{equation*}
or
\begin{equation} \label{Poisson_Homogeneous_Neumann_psi}
\left\{
\begin{array}{rlrr}
\Delta \psi&= -\partial_x\bar{\rho}  \qquad &\text{in}\quad &\Omega,\\
\psi &=0 \hspace{1.1 cm} &\text{on}  \quad &\partial\Omega,
\end{array}
\right.
\end{equation}
and after that write
\begin{equation*}
\mathbf{u}=-\nabla \Pi-(0,\bar{\rho}) \qquad \text{or} \qquad \mathbf{u}=\nabla^{\perp}\psi.
\end{equation*}

In the rest of the paper we will use the \textit{stream formulation} to recover the velocity field. In the next section,  we present an orthonormal basis of $X^k(\Omega)$ in order to solve (\ref{Poisson_Homogeneous_Neumann_psi}), which allows to write the velocity in terms of the ``Fourier coefficients" of $\bar{\rho}$.

\subsection{An orthonormal  basis for $X^k(\Omega)$}
Our goal is to solve (\ref{Poisson_Homogeneous_Neumann_psi}). In order to do this, we define:
$$a_p(x):=\frac{1}{\sqrt{2\,\pi}}\, \exp\left( i p x  \right) \quad \text{with } x\in \mathbb{T} \quad \text{for } p\in \Z$$
and
$$b_{q}(y):= \begin{cases}
\cos\left(q y \frac{\pi}{2}  \right) \qquad q \quad \text{odd}\\
\sin\left(q y \frac{\pi}{2}   \right) \qquad q \quad \text{even}
\end{cases} \quad \text{with } y\in[-1,1] \quad \text{for } q\in \N $$
where $\{a_p\}_{p\in\Z}$ and $\{b_{q}\}_{q\in\N}$ are orthonormal basis for $L^2(\mathbb{T})$ and $L^2([-1,1])$ respectively. Indeed, $\{b_{q}\}_{q\in\N}$ consists of eigenfunctions of the operator $S=(1-\pa^2_y)$ with domain $\mathscr{D}(S)=\{f\in H^2[-1,1]\,:\, f(\pm 1)=0\}$.    Consequently, the product of them $\omega_{p,q}(x,y):=a_{p}(x)\,b_{q}(y)$ with $(p,q)\in\Z\times\N$ is an orthonormal basis for the product space $L^2(\mathbb{T}\times[-1,1])\equiv L^2(\Omega)$.

Now, we define an  auxiliary orthonormal basis for $L^2([-1,1])$ given by
$$c_{q}(y):= \begin{cases}
\sin\left(q y \frac{\pi}{2}  \right) \qquad q \quad \text{odd}\\
\cos\left(q y \frac{\pi}{2}   \right) \qquad q \quad \text{even}
\end{cases} \quad \text{with } y\in[-1,1] \quad \text{for } q\in \N\cup\{0\},$$
consisting of eigenfunctions of the operator $S=1-\pa^2_y$ with domain $\mathscr{D}(S)=\{f\in H^2[-1,1]\,:\, (\pa_yf)(\pm 1)=\nolinebreak0\}$.
In the same way as before, the product $\varpi_{p,q}(x,y):=a_{p}(x)\,c_{q}(y)$ with $(p,q)\in\Z\times\left(\N\cup \{0\}\right)$ is again an orthonormal basis for $L^2(\Omega)$.\\

\noindent
\textbf{Remark:} Let us describe the analogue of the Fourier expansion in terms of our eigenfunctions expansion. This is, for $f\in L^2(\Omega)$, we have the $L^2(\Omega)$-conergence given by:
\begin{equation}\label{def_eigenfunction_expansion}
f(x,y)=\sum_{p\in\Z}\,\sum_{q\in \N}\mathcal{F}_{\omega}[f](p,q)\,\omega_{p,q}(x,y) \quad \text{where} \quad \mathcal{F}_{\omega}[f](p,q):=\int_{\Omega}f(x',y')\,\overline{\omega_{p,q}(x',y')}\,dx'dy'
\end{equation}
or
\begin{equation}\label{def_eigenfunction_expansion_base_Y}
f(x,y)=\sum_{p\in\Z}\,\sum_{q\in \N\cup \{0\}}\mathcal{F}_{\varpi}[f](p,q)\,\varpi_{p,q}(x,y) \quad \text{where} \quad \mathcal{F}_{\varpi}[f](p,q):=\int_{\Omega}f(x',y')\,\overline{\varpi_{p,q}(x',y')}\,dx'dy'.
\end{equation}

The main result of this part is to see that $\{\omega_{p,q}\}_{(p,q)\in\Z\times\N}$ is an orthonormal basis not only for $L^2(\Omega)$ but for $X^k(\Omega)$, and that $\{\varpi_{p,q}\}_{(p,q)\in\Z\times(\N\cup\{0\})}$ is basis of $Y^k(\Omega)$. The sequence $\{a_p\}_{p\in\Z}$ is the standard Fourier basis in $H^k(\mathbb{T})$. Then, we will  focus only on the convergence properties of \emph{span}$\{b_1,b_2,b_3,\ldots\}$ and \emph{span}$\{c_0,c_1,c_2,\ldots\}$.\\

As we will see below, the relation between derivatives of $\{b_q\}_{q\in\N}$ and $\{c_q\}_{q\in\N\cup\{0\}}$ plays a key role in the convergence properties. An easy computation gives us:
\begin{equation}\label{pa_b}
(\pa_y b_q)(y)=(-1)^{q} q \tfrac{\pi}{2} c_q(y) \qquad \text{for} \quad q\in\N
\end{equation}
and
\begin{equation}\label{pa_c}
(\pa_y c_q)(y)=\begin{cases}
-(-1)^{q} q \tfrac{\pi}{2} b_q(y)   \quad &q\in\N,\\
0   \quad &q=0.
\end{cases}
\end{equation}
Then, as a consequence of (\ref{pa_b}) and (\ref{pa_c}), for $q\in\N$ we have:
\begin{equation}\label{pa^2_basis}
(\pa_y^2 b_q)(y)=- \left(q \tfrac{\pi}{2}\right)^2 b_q(y) \qquad \text{and} \qquad  (\pa_y^2 c_q)(y)=- \left(q \tfrac{\pi}{2}\right)^2 c_q(y).\\
\end{equation}

\noindent
Hence, for each $f\in L^2([-1,1])$, as $\{b_q\}_{q\in\N}$ and $\{c_q\}_{q\in\N\cup\{0\}}$ are orthonormal bases for $L^2([-1,1])$ we have:
\begin{equation}\label{L^2[-L,L]_CONVERGENCE}
P_M f\xrightarrow{M\rightarrow \infty} f \qquad \text{and} \qquad Q_M f\xrightarrow{M\rightarrow \infty} f \qquad \text{in} \quad L^2([-1,1])
\end{equation}
where the partial sums are given by:
\begin{equation}\label{Partial_sum}
P_M f(y)=\sum_{m=1}^{M} \left\langle f,b_m \right\rangle\, b_m(y) \qquad \text{and} \qquad  Q_M f(y)=\sum_{m=0}^{M} \left\langle f,c_m\right\rangle\, c_m(y).
\end{equation}
\textbf{Remark:} Here, the notation $\left\langle\cdot,\cdot\right\rangle$ refers to the inner product in $L^2([-1,1])$.\\

We are now ready to present the main lemmas of this section. Let us recall first definitions (\ref{Space_X}) and (\ref{Space_Y}), which give us:
$$X^k([-1,1])=\{f\in H^{k}([-1,1]): (\partial_y^{n}f)(\pm 1)=0 \quad \text{for } n=0,2,4,\ldots,k^{\star}\}\,$$
and
$$Y^k([-1,1])=\{f\in H^{k}([-1,1]): (\partial_y^{n}f)(\pm 1)=0 \quad \text{for } n=1,3,4,\ldots,k_{\star}\}.$$

\begin{lemma}\label{Base_X^k}
$\{b_q\}_{q\in\N}$ is an orthonormal base of $X^k([-1,1])$.
\end{lemma}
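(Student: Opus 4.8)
The plan is to establish two facts: each $b_q$ belongs to $X^k([-1,1])$, and the $L^2$-orthonormal system $\{b_q\}_{q\in\N}$ is complete in the $H^k$ topology, i.e. for every $f\in X^k([-1,1])$ the partial sums $P_Mf$ from \eqref{Partial_sum} converge to $f$ in $H^k([-1,1])$.

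First I would verify membership. Set $\lambda_q:=q\tfrac\pi2$. By \eqref{pa^2_basis} one has $\partial_y^{2i}b_q=(-1)^i\lambda_q^{2i}b_q$, so every even-order derivative of $b_q$ is a constant multiple of $b_q$. Since $b_q(\pm1)=0$ for all $q$ — for $q$ odd because $\cos(\pm q\tfrac\pi2)=0$, for $q$ even because $\sin(\pm q\tfrac\pi2)=0$ — each $\partial_y^{n}b_q$ with $n$ even vanishes at $y=\pm1$, in particular for $n=0,2,\ldots,k^\star$. Hence $b_q\in X^k([-1,1])$; orthonormality in $L^2([-1,1])$ is part of the construction of the basis.

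The heart of the lemma is completeness in $H^k$. Since $\|g\|_{H^k}^2=\sum_{j=0}^k\|\partial_y^jg\|_{L^2}^2$, it suffices to prove $\partial_y^jP_Mf\to\partial_y^jf$ in $L^2([-1,1])$ for each $0\le j\le k$. The strategy is to recognise $\partial_y^jP_Mf$ as a partial sum of $\partial_y^jf$ in one of the two bases. Differentiating \eqref{Partial_sum} and using \eqref{pa_b}--\eqref{pa^2_basis}, $\partial_y^jb_m$ equals an explicit sign times $\lambda_m^jb_m$ when $j$ is even and times $\lambda_m^jc_m$ when $j$ is odd, so $\partial_y^jP_Mf=\sum_{m=1}^M\langle f,b_m\rangle\,\partial_y^jb_m$ is this constant times $\sum_{m=1}^M\langle f,b_m\rangle b_m$ or $\sum_{m=1}^M\langle f,b_m\rangle c_m$. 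On the other hand I would integrate by parts $j$ times to compute $\langle\partial_y^jf,b_m\rangle$ (for $j$ even) and $\langle\partial_y^jf,c_m\rangle$ (for $j$ odd), transferring all derivatives onto the basis function, and show these equal the same constant times $\langle f,b_m\rangle$. The step I expect to be the main obstacle is the control of the boundary terms: each integration by parts yields a term whose basis factor is $\partial_y^ib_m$ (resp. $\partial_y^ic_m$), which by \eqref{pa_b}--\eqref{pa_c} vanishes at $y=\pm1$ for every $i$ of one fixed parity; the surviving terms then carry a factor $\partial_y^{j-1-i}f$ of even order at most $k^\star$, which vanishes because $f\in X^k([-1,1])$. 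Matching this parity bookkeeping to the precise list $n=0,2,\ldots,k^\star$ in \eqref{Space_X}, in both the cases $j$ even and $j$ odd (and for $j=k$ with $k$ of either parity), is the delicate point. One must also check that the mode $c_0$ absent from $\partial_y^jP_Mf$ contributes nothing, which holds because $\langle\partial_y^jf,c_0\rangle=\int_{-1}^1\partial_y^jf\,dy=\big[\partial_y^{j-1}f\big]_{-1}^1=0$ by the same boundary conditions.

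With these identities in hand the proof closes quickly: for $j$ even $\partial_y^jP_Mf=P_M(\partial_y^jf)$, and for $j$ odd $\partial_y^jP_Mf=Q_M(\partial_y^jf)$ with the $c_0$ term absent, so \eqref{L^2[-L,L]_CONVERGENCE} applied to $\partial_y^jf\in L^2([-1,1])$ gives $\partial_y^jP_Mf\to\partial_y^jf$ in $L^2$ for all $0\le j\le k$. Summing in $j$ yields $P_Mf\to f$ in $H^k([-1,1])$, which together with $L^2$-orthonormality and $b_q\in X^k([-1,1])$ proves the claim. The same integration-by-parts identities incidentally show $\langle\partial_y^jb_p,\partial_y^jb_q\rangle=0$ for $p\ne q$, so $\{b_q\}$ is in fact $H^k$-orthogonal as well.
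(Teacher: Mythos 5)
Your proof is correct and follows essentially the same route as the paper: both identify $\partial_y^jP_Mf$ with $P_M(\partial_y^jf)$ for $j$ even and $Q_M(\partial_y^jf)$ for $j$ odd via integration by parts (the paper does the odd case by a single integration by parts reducing $\langle\partial_y^{n+1}f,c_m\rangle$ to $\langle\partial_y^nf,b_m\rangle$, while you transfer all $j$ derivatives at once — a cosmetic difference), kill the boundary terms by exactly the parity bookkeeping you describe, dispose of the $c_0$ mode via $\langle\partial_y^jf,c_0\rangle=\bigl[\partial_y^{j-1}f\bigr]_{-1}^{1}=0$, and conclude by $L^2$ convergence of the partial sums in each derivative order. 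Your parity check against the list $n=0,2,\ldots,k^\star$ is sound in both cases $k$ even and $k$ odd, so there is no gap.
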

\begin{proof}
Since the orthogonality is trivial, we will give the details of the completeness of the basis. For a function $f\in X^{k}([-1,1])$ we know that $f\in H^{k}([-1,1])$. Then, by (\ref{L^2[-L,L]_CONVERGENCE}) we have that:
$$P_m \pa_y^n f \xrightarrow{M\rightarrow \infty} \pa_y^n f  \qquad \text{in} \quad  L^2([-1,1]) \qquad \text{for} \quad  n=0,2,4,\ldots,\text{either $k$ or $k-1$}.$$
By (\ref{Partial_sum}) we get:
\begin{equation}\label{P_M derivadas}
P_M \partial_y^n f=\sum_{m=1}^{M}\left\langle \pa_y^n f,b_m \right\rangle\, b_m(y)
\end{equation}
where, by integration by parts and (\ref{pa^2_basis}), we have:
\begin{align}\label{recusion_derivadas_1}
\left\langle \pa_y^n f,b_m \right\rangle=\int_{-1}^{+1}\pa_y^n f(y') b_m(y')\,dy'&=\int_{-1}^{+1}f(y') \pa_y^n b_q(y')\,dy' \nonumber\\
&=(-1)^n \left( q \tfrac{\pi}{2}\right)^{n}\int_{-1}^{+1}f(y') b_q(y')\,dy' \nonumber\\
&=(-1)^n \left( q \tfrac{\pi}{2}\right)^{n} \left\langle  f,b_m \right\rangle.
\end{align}
We must note that, thanks to $b_q(\pm 1)=0$ and the boundary conditions, the boundary terms in the integration by parts vanish. Therefore, putting (\ref{recusion_derivadas_1}) in (\ref{P_M derivadas}) and applaying again (\ref{pa^2_basis}) we arrive to $P_M \pa_y^n f\equiv \pa_y^n P_M f$ and we obtain:
$$\pa_y^n P_M  f \xrightarrow{M\rightarrow \infty} \pa_y^n f  \qquad \text{in} \quad  L^2([-1,1]) \qquad \text{for} \quad  n=0,2,4,\ldots,\text{either $k$ or $k-1$}.$$
Moreover, by (\ref{L^2[-L,L]_CONVERGENCE}) we have:
$$Q_M \pa_y^{n+1} f \xrightarrow{M\rightarrow \infty} \pa_y^{n+1} f  \qquad \text{in} \quad  L^2([-1,1]) \qquad \text{for} \quad  n=0,2,4,\ldots,k^*$$
where by (\ref{Partial_sum}), we get:
\begin{equation}\label{Q_M derivadas}
Q_M \partial_y^{n+1} f=\sum_{m=0}^{M}\left\langle \pa_y^{n+1} f,c_m \right\rangle\, c_m(y).
\end{equation}
We notice that $\left\langle \pa_y^{n+1} f,c_0 \right\rangle=0$ due to the fact that $(\pa_y^n f)(\pm 1)=0$ by hypothesis. In addition, by integration by parts and (\ref{pa_c}) for $m\geq 1$ we obtain:
\begin{align}\label{recusion_derivadas_2}
\left\langle \pa_y^{n+1} f,c_m \right\rangle=\int_{-1}^{+1}\pa_y^{n+1} f(y') c_q(y')\,dy'&=-\int_{-1}^{+1}\pa_y^{n} f(y') (\pa_y c_q)(y')\,dy' \nonumber\\
&=(-1)^q \left(q\tfrac{\pi}{2}\right) \int_{-1}^{+1} \pa_y^n f(y') b_q(y')\, dy'\nonumber\\
&=(-1)^q \left( q\tfrac{\pi}{2}\right) \left\langle \pa_y^{n} f,b_m \right\rangle.
\end{align}
Here, the boundary term vanishes because by hypothesis we have that $(\pa_y^n f)(\pm 1)=0$.
Hence, putting (\ref{recusion_derivadas_2}) in (\ref{Q_M derivadas}) and applaying again (\ref{pa_b}) we arrive to $Q_M \partial_y^{n+1} f\equiv \pa_y P_M \pa_y^n f\equiv \pa_y^{n+1}P_M f$. Therefore:
$$\pa_y^{n+1} P_M  f \xrightarrow{M\rightarrow \infty} \pa_y^{n+1} f  \qquad \text{in} \quad  L^2([-1,1]) \qquad \text{for} \quad  n=0,2,4,\ldots,k^{\star}.$$
\end{proof}

\begin{lemma}\label{Base_Y^k}
$\{c_q\}_{q\in\N\cup\{0\}}$ is an orthonormal base of $Y^k([-1,1])$.
\end{lemma}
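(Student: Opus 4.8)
The plan is to repeat the argument of Lemma \ref{Base_X^k} almost verbatim, with the roles of $\{b_q\}$ and $\{c_q\}$ — and correspondingly of $P_M$ and $Q_M$ — interchanged. Orthonormality is immediate, so the whole content is completeness: for $f\in Y^k([-1,1])$ one must show $Q_M f\to f$ in $H^k([-1,1])$, that is $\pa_y^n Q_M f\to \pa_y^n f$ in $L^2$ for every $0\le n\le k$. First I would record that $\{c_q\}$ indeed lies in $Y^k$: by (\ref{pa_c}), $\pa_y c_q$ is a multiple of $b_q$, which vanishes at $\pm1$, and iterating with (\ref{pa^2_basis}) shows $(\pa_y^n c_q)(\pm1)=0$ for every odd $n$, which are exactly the conditions defining $Y^k$.

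The engine of the proof is the pair of operator identities $\pa_y Q_M f=P_M\pa_y f$ and $\pa_y P_M g=Q_M\pa_y g$. The first holds for every $f\in H^1$: integrating by parts in $\langle\pa_y f,b_m\rangle$, the boundary term $[f\,b_m]_{-1}^{1}$ vanishes since $b_m(\pm1)=0$, and (\ref{pa_b}) identifies the coefficient as $-(-1)^m m\tfrac{\pi}{2}\langle f,c_m\rangle$, which by (\ref{pa_c}) is exactly the $b_m$-coefficient of $\pa_y Q_M f$. The second identity instead requires $g(\pm1)=0$, because the boundary term is now $[g\,c_m]_{-1}^{1}$ and one must also kill the $c_0$-coefficient $\langle\pa_y g,c_0\rangle=g(1)-g(-1)$. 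Applying these two relations alternately yields $\pa_y^n Q_M f=Q_M\pa_y^n f$ for even $n$ and $\pa_y^n Q_M f=P_M\pa_y^n f$ for odd $n$, where each passage from an odd order to the next even order is the step that invokes $(\pa_y^{\mathrm{odd}}f)(\pm1)=0$. Equivalently, one can mirror the displayed computation of Lemma \ref{Base_X^k} directly, doing the integrations by parts in $\langle\pa_y^n f,c_m\rangle$ for even $n$ and in $\langle\pa_y^n f,b_m\rangle$ for odd $n$, and treating the $c_0$ coefficient separately via $\langle\pa_y^n f,c_0\rangle=[\pa_y^{n-1}f]_{-1}^{1}$.

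The delicate point, which I expect to be the main obstacle, is the bookkeeping that matches the boundary terms to the definition of $Y^k$. One must verify that every boundary term generated in reaching $\pa_y^n Q_M f$ is annihilated either by $(\pa_y^{\mathrm{odd}}f)(\pm1)=0$ or by $(\pa_y^{\mathrm{odd}}c_m)(\pm1)=0$, and that the highest odd derivative of $f$ actually used is of order $n-1$ when $n$ is even and $n-2$ when $n$ is odd. Since the largest even $n\le k$ and the largest odd $n\le k$ are $k$ and $k-1$ (resp. $k-1$ and $k$) according to the parity of $k$, the required orders are precisely $1,3,\ldots,k_\star$ — exactly the conditions built into $Y^k([-1,1])$ through the definition of $k_\star$. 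With this matching secured, the $L^2$-convergence (\ref{L^2[-L,L]_CONVERGENCE}) of $P_M$ and $Q_M$ applied to $\pa_y^n f$ gives $\pa_y^n Q_M f\to\pa_y^n f$ for all $0\le n\le k$, hence $Q_M f\to f$ in $H^k$, which establishes completeness and finishes the proof.
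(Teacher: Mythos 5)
Your proposal is correct and takes essentially the same approach as the paper: the paper's own proof of Lemma \ref{Base_Y^k} is a one-line deferral to the argument of Lemma \ref{Base_X^k}, and your mirrored version — commuting $Q_M$ and $P_M$ with $\partial_y$ via integration by parts, killing boundary terms either through $b_m(\pm 1)=0$ (no condition on $f$) or through $(\partial_y^{\mathrm{odd}}f)(\pm 1)=0$, and treating the $c_0$-coefficient $\langle \partial_y g, c_0\rangle \propto g(1)-g(-1)$ separately — is exactly that dualization, with the parity bookkeeping correctly showing that the odd orders $1,3,\ldots,k_\star$ demanded by the definition of $Y^k([-1,1])$ are precisely the boundary conditions consumed by the iteration. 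In fact your write-up supplies the detail the paper omits (in particular the observation that $\partial_y Q_M f = P_M \partial_y f$ needs no boundary condition on $f$, while $\partial_y P_M g = Q_M \partial_y g$ requires $g(\pm 1)=0$), so there is no gap.
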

\begin{proof}
This results follows from the same ideas than the proof of the above Lemma (\ref{Base_X^k}).
\end{proof}

Because of  Lemmas \ref{Base_X^k} and \ref{Base_Y^k} one has the following expressions for both the $X^k(\Omega)$ and $Y^k(\Omega)$ norms.

\begin{cor}\label{thm_norma_expansion}
Let $f\in X^k(\Omega)$ and $g\in Y^k(\Omega)$. For $s_1,s_2\in\N\cup\{0\}$ such that $s_1+s_2\leq k$, we have:
\begin{align*}
||\partial_x^{s_1}\partial_y^{s_2}f||_{L^2(\Omega)}^2&=\sum_{p\in\Z}\sum_{q\in\N}|p|^{2 s_1} |q\tfrac{\pi}{2}|^{2 s_2}\left|\mathcal{F}_{\omega}[f](p,q)\right|^2\\
||\partial_x^{s_1}\partial_y^{s_2}g||_{L^2(\Omega)}^2&=\sum_{p\in\Z}\sum_{q\in\N\cup\{0\}}|p|^{2 s_1} |q\tfrac{\pi}{2}|^{2 s_2}\left|\mathcal{F}_{\varpi}[f](p,q)\right|^2
\end{align*}
where $\mathcal{F}_{\omega}[f](p,q)$ and $\mathcal{F}_{\varpi}[f](p,q)$ are given by (\ref{def_eigenfunction_expansion}) and (\ref{def_eigenfunction_expansion_base_Y}) respectively.\\
\end{cor}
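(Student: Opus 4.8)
The plan is to reduce the identity to a Parseval relation for the differentiated series, so the first task is to record how $\partial_x^{s_1}\partial_y^{s_2}$ acts on a single basis element $\omega_{p,q}=a_p b_q$. Since $\partial_x a_p = ip\,a_p$, we get $\partial_x^{s_1}a_p=(ip)^{s_1}a_p$ at once. For the $y$-derivatives I would iterate (\ref{pa^2_basis}): when $s_2=2j$ is even, $\partial_y^{s_2}b_q=(-1)^j(q\tfrac{\pi}{2})^{s_2}b_q$, while when $s_2=2j+1$ is odd, one further application of (\ref{pa_b}) gives $\partial_y^{s_2}b_q=(-1)^{j+q}(q\tfrac{\pi}{2})^{s_2}c_q$. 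In either case $\partial_y^{s_2}b_q=\varepsilon\,(q\tfrac{\pi}{2})^{s_2}\beta_q$ with $|\varepsilon|=1$ and $\beta_q$ the unit-norm function $b_q$ (even $s_2$) or $c_q$ (odd $s_2$). Consequently
$$\partial_x^{s_1}\partial_y^{s_2}\omega_{p,q}=\varepsilon\,(ip)^{s_1}(q\tfrac{\pi}{2})^{s_2}\,a_p\beta_q,$$
a scalar multiple, of modulus $|p|^{s_1}(q\tfrac{\pi}{2})^{s_2}$, of an element of an orthonormal family: the basis $\{\omega_{p,q}\}$ when $s_2$ is even and $\{\varpi_{p,q}\}=\{a_pc_q\}$ when $s_2$ is odd.

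Second, I would justify differentiating the expansion (\ref{def_eigenfunction_expansion}) term by term. This is exactly what Lemma \ref{Base_X^k} supplies in the $y$-variable: for $f\in X^k$ the truncations $\partial_y^n P_M f$ converge to $\partial_y^n f$ in $L^2$ for the relevant orders $n\le k^{\star}$ (with the odd orders handled through the $c_q$ expansion), while the standard Fourier basis $\{a_p\}$ handles the $x$-derivatives; since $f\in H^k$, the operator $\partial_x^{s_1}\partial_y^{s_2}$ with $s_1+s_2\le k$ therefore commutes with the doubly indexed partial sums in the $L^2$ limit, yielding
$$\partial_x^{s_1}\partial_y^{s_2}f=\sum_{p\in\Z}\sum_{q\in\N}\mathcal{F}_{\omega}[f](p,q)\,\partial_x^{s_1}\partial_y^{s_2}\omega_{p,q}$$
with convergence in $L^2(\Omega)$.

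Third, I would substitute the formula from the first step and invoke Parseval's identity for the pertinent orthonormal basis (the $\{\omega_{p,q}\}$ basis if $s_2$ is even, the $\{\varpi_{p,q}\}$ basis if $s_2$ is odd). Orthonormality annihilates all cross terms, leaving
$$\|\partial_x^{s_1}\partial_y^{s_2}f\|_{L^2}^2=\sum_{p\in\Z}\sum_{q\in\N}|p|^{2s_1}(q\tfrac{\pi}{2})^{2s_2}\,|\mathcal{F}_{\omega}[f](p,q)|^2,$$
which is the claim. The $Y^k$ statement follows by the symmetric computation, using (\ref{pa_c}), (\ref{pa^2_basis}) and Lemma \ref{Base_Y^k}; the only point worth noting is the index $q=0$, where $\partial_y c_0=0$ is matched by the vanishing factor $(q\tfrac{\pi}{2})^{2s_2}=0$ for $s_2\ge 1$, so that term drops out (and contributes the pure $x$-derivative when $s_2=0$).

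The computations are elementary; the step that genuinely needs care is the second one, the term-by-term differentiation, because it is there that the boundary conditions encoded in $X^k$ and $Y^k$ are indispensable. They are precisely what make the completeness statements of Lemmas \ref{Base_X^k} and \ref{Base_Y^k}, hence the $L^2$-convergence of the differentiated series, hold: without the vanishing of the appropriate boundary traces the integration-by-parts identities (\ref{recusion_derivadas_1})–(\ref{recusion_derivadas_2}) underpinning those lemmas would pick up boundary terms and the Parseval identity for the derivatives would fail.
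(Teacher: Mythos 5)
Your proof is correct and takes essentially the same route as the paper, which states the corollary as a direct consequence of Lemmas \ref{Base_X^k} and \ref{Base_Y^k}: differentiate the eigenfunction expansion term by term (legitimate precisely because of the completeness of $\{b_q\}$ and $\{c_q\}$ in $X^k$ and $Y^k$, which encodes the boundary conditions), observe that $\partial_x^{s_1}\partial_y^{s_2}$ sends $\omega_{p,q}$ to a unimodular multiple of $|p|^{s_1}(q\tfrac{\pi}{2})^{s_2}$ times an element of the orthonormal family $\{\omega_{p,q}\}$ or $\{\varpi_{p,q}\}$ according to the parity of $s_2$, and conclude by Parseval. Your explicit sign bookkeeping via \eqref{pa_b}--\eqref{pa^2_basis} and your remark on the $q=0$ mode merely make explicit what the paper leaves implicit.
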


Introducing a threshold number $m\in\N$, we define the projections $\mathbb{P}_m$ and $\mathbb{Q}_m$ of $L^2(\Omega)$ onto the linear span of eigenfunctions generated by $\{\omega_{p,q}\}_{(p,q)\in \Z\times \N}$ and $\{\varpi_{p,q}\}_{(p,q)\in \Z\times \N\cup\{0\}}$  respectively, such that $\{|p|,\,q\} \leq m$. This is, we have that:
\begin{equation}\label{projector_def}
\mathbb{P}_m [f](x,y):=\sum_{\substack{|p| \leq m\\ p\in \Z}}\sum_{\substack{q \leq m\\ q\in\N}} \mathcal{F}_{\omega}[f](p,q)\, w_{p,q}(x,y) \qquad \text{and} \qquad  \mathbb{Q}_m [f](x,y):=\sum_{\substack{|p|\leq m\\p\in\Z}}\sum_{\substack{q\leq m\\q\in\N\cup\{0\}}} \mathcal{F}_{\varpi}[f](p,q)\, \varpi_{p,q}(x,y).
\end{equation}
These projections have the following properties:
\begin{lemma}\label{projectors_properties}
For $f\in L^2(\Omega)$, we have that $\mathbb{P}_m[f]$ and  $\mathbb{Q}_m[f]$ are  $C^{\infty}(\Omega)$ functions such that:
\begin{itemize}
	\item 	For $f\in H^1(\Omega)$ we have that:
\begin{align*}\label{Pm}
&\partial_x\mathbb{P}_m[f]=\mathbb{P}_m[\partial_x f],  \quad  \partial_x\mathbb{Q}_m[f]=\mathbb{Q}_m[\partial_x f],\quad \partial_y\mathbb{P}_m[f]=\mathbb{Q}_m[\partial_y f] \quad\text{and}  \quad \partial_y\mathbb{Q}_m[f]=\mathbb{P}_m[\partial_y f].
\end{align*}	
\emph{As a consequence, for $f\in H^2(\Omega)$, we have:}
	$$\partial_y^2\mathbb{P}_m[f]=\mathbb{P}_m[\partial_y^2 f] \qquad \text{and} \qquad  \partial_y^2\mathbb{Q}_m[f]=\mathbb{Q}_m[\partial_y^2 f].$$
	\item The projectors are self-adjoint in $L^2(\Omega)$:
	$$(\mathbb{P}_m[f],g)=(f,\mathbb{P}_m[g]) \quad \text{and}\quad  (\mathbb{Q}_m[f],g)=(f,\mathbb{Q}_m[g]) \quad \qquad \forall f,g\in L^{2}(\Omega).$$
\item For $f\in X^k(\Omega)$ and $g\in Y^k(\Omega)$:
\begin{align*}
||\mathbb{P}_m[f]||_{H^k(\Omega)}\leq ||f||_{H^k(\Omega)}, \quad \mathbb{P}_m[f]\to f \quad \text{in $X^k(\Omega)$}\,\\
||\mathbb{Q}_m [g]||_{H^k(\Omega)}\leq ||g||_{H^k(\Omega)},\quad \mathbb{Q}_m[f]\to f \quad \text{in $Y^k(\Omega)$}.
\end{align*}
\end{itemize}
\end{lemma}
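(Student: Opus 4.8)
My plan is to reduce every assertion to the one-dimensional behaviour of the bases $\{b_q\}_{q\in\N}$ and $\{c_q\}_{q\in\N\cup\{0\}}$ tensored with the Fourier basis $\{a_p\}_{p\in\Z}$, exploiting the three structural identities \eqref{pa_b}, \eqref{pa_c} and \eqref{pa^2_basis}. The smoothness is immediate: by \eqref{projector_def} each of $\mathbb{P}_m[f]$ and $\mathbb{Q}_m[f]$ is a \emph{finite} linear combination of the functions $\omega_{p,q}=a_pb_q$ (resp. $\varpi_{p,q}=a_pc_q$), and each such product is a product of trigonometric functions, hence $C^{\infty}(\Omega)$; a finite sum of smooth functions is smooth. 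The self-adjointness is equally direct: $\mathbb{P}_m$ (resp. $\mathbb{Q}_m$) is the orthogonal projection of $L^2(\Omega)$ onto the span of the subfamily $\{\omega_{p,q}:|p|,q\le m\}$ (resp. $\{\varpi_{p,q}:|p|,q\le m\}$) of an orthonormal basis, so expanding $f$ and $g$ via \eqref{def_eigenfunction_expansion} and \eqref{def_eigenfunction_expansion_base_Y} makes $(\mathbb{P}_m[f],g)$ and $(f,\mathbb{P}_m[g])$ manifestly equal to the same truncated sum.

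For the commutation relations I would argue coefficient by coefficient, computing the eigenfunction coefficients of $\partial_x f$ and $\partial_y f$ by integration by parts. Since $\partial_x\omega_{p,q}=ip\,\omega_{p,q}$ and $\partial_x\varpi_{p,q}=ip\,\varpi_{p,q}$, and since integration by parts in the periodic variable $x$ produces no boundary term, $\partial_x$ acts as the diagonal multiplier $ip$ on both expansions; as $\mathbb{P}_m$ and $\mathbb{Q}_m$ truncate identically in $p$, the relations $\partial_x\mathbb{P}_m[f]=\mathbb{P}_m[\partial_x f]$ and $\partial_x\mathbb{Q}_m[f]=\mathbb{Q}_m[\partial_x f]$ follow for every $f\in H^1(\Omega)$. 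For the vertical derivative, identities \eqref{pa_b}--\eqref{pa_c} say that $\partial_y$ \emph{interchanges} the two bases up to the factor $(-1)^q q\tfrac{\pi}{2}$, which is the algebraic reason the projectors swap. The analytic content is the boundary term produced when one integrates $\int_{-1}^{1}(\partial_y f)\,b_q\,dy$ or $\int_{-1}^{1}(\partial_y f)\,c_q\,dy$ by parts: in the first case it carries the factor $b_q(\pm1)=0$ and therefore vanishes for \emph{every} $f\in H^1(\Omega)$, giving $\partial_y\mathbb{Q}_m[f]=\mathbb{P}_m[\partial_y f]$ unconditionally.

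This is exactly where I expect the main difficulty. In the conjugate relation $\partial_y\mathbb{P}_m[f]=\mathbb{Q}_m[\partial_y f]$ the boundary term instead carries the factor $c_q(\pm1)$, which does \emph{not} vanish, together with a genuine $q=0$ (constant) contribution; these drop out precisely when the Dirichlet trace $f|_{\partial\Omega}$ vanishes, i.e. for $f\in X^1(\Omega)$ --- this is where the structure built into the space \eqref{Space_X} enters, and the symmetric point for $\mathbb{Q}_m$ uses the Neumann-type trace of \eqref{Space_Y}. Granting the first-order relations on the appropriate spaces, the second-order statements follow by composition, $\partial_y^2\mathbb{P}_m[f]=\partial_y\big(\mathbb{Q}_m[\partial_y f]\big)=\mathbb{P}_m[\partial_y^2 f]$ for $f$ with vanishing Dirichlet trace (and symmetrically for $\mathbb{Q}_m$); they can also be checked directly from $\partial_y^2 b_q=-(q\tfrac{\pi}{2})^2 b_q$ in \eqref{pa^2_basis} by two integrations by parts, the point being that the surviving boundary term is exactly the one annihilated by the conditions defining $X^k$, resp. $Y^k$.

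Finally, for the norm bound and the convergence I would invoke Corollary \ref{thm_norma_expansion}. Since each $\omega_{p,q}$ lies in $X^k(\Omega)$ --- its even $y$-derivatives are scalar multiples of $b_q$, which vanishes on $\partial\Omega$ --- the truncation $\mathbb{P}_m[f]$ again lies in $X^k(\Omega)$, so the corollary applies to it. Writing $\|h\|_{H^k(\Omega)}^2$ as a weighted series $\sum_{p,q}\Lambda_k(p,q)\,|\mathcal{F}_{\omega}[h](p,q)|^2$ with nonnegative weights $\Lambda_k(p,q)$ assembled from the monomials $|p|^{2s_1}|q\tfrac{\pi}{2}|^{2s_2}$, the coefficients of $\mathbb{P}_m[f]$ are those of $f$ restricted to $\{|p|,q\le m\}$; hence $\|\mathbb{P}_m[f]\|_{H^k}^2$ is a sub-sum of $\|f\|_{H^k}^2$, yielding $\|\mathbb{P}_m[f]\|_{H^k}\le\|f\|_{H^k}$, while $\|f-\mathbb{P}_m[f]\|_{H^k}^2$ is the tail of a convergent series and so tends to $0$, which is precisely convergence in $X^k(\Omega)$. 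The statements for $\mathbb{Q}_m$ and $Y^k(\Omega)$ are identical, with $\{c_q\}$, \eqref{def_eigenfunction_expansion_base_Y} and Lemma \ref{Base_Y^k} replacing their counterparts. The one genuine obstacle, to reiterate, is the bookkeeping of the integration-by-parts boundary terms in the vertical commutation relations, and recognising that the spaces $X^k$ and $Y^k$ are designed exactly so that the non-vanishing traces $c_q(\pm1)$ never contribute.
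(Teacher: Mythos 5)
Your proposal is correct and follows the same route the paper intends: its proof of Lemma \ref{projectors_properties} is just a pointer to the integration-by-parts arguments of Lemma \ref{Base_X^k} --- the identities \eqref{pa_b}--\eqref{pa^2_basis} together with the vanishing of $b_q(\pm 1)$ and the trace conditions built into $X^k(\Omega)$ and $Y^k(\Omega)$ --- which is exactly what you spell out, including the orthogonal-truncation argument for self-adjointness, the sub-sum/tail argument via Corollary \ref{thm_norma_expansion} for the norm bound and convergence, and the correct observation that $\partial_y\mathbb{Q}_m[f]=\mathbb{P}_m[\partial_y f]$ is unconditional because the boundary term carries $b_q(\pm1)=0$. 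One point in your favour: your bookkeeping is sharper than the lemma's own wording, since $\partial_y\mathbb{P}_m[f]=\mathbb{Q}_m[\partial_y f]$ (and hence $\partial_y^2\mathbb{P}_m[f]=\mathbb{P}_m[\partial_y^2 f]$, resp.\ $\partial_y^2\mathbb{Q}_m[f]=\mathbb{Q}_m[\partial_y^2 f]$) genuinely requires $f|_{\partial\Omega}=0$ (resp.\ $\partial_y f|_{\partial\Omega}=0$) because $c_q(\pm1)\neq 0$, and not merely $f\in H^1(\Omega)$ ($H^2(\Omega)$) as stated; since the paper only ever applies the lemma to functions with this structure (e.g.\ $\rho^{[m]}$, $u_2^{[m]}\in X^k(\Omega)$, $u_1^{[m]}\in Y^k(\Omega)$), your restricted version is the one actually used.
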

\begin{proof} The proof is based in the arguments of the proof of Lemma \ref{Base_X^k}.\end{proof}

\section{Poisson's problem in a bounded strip}\label{Sec_3}
With all this in mind, it is time to solve  the Poisson's system with homogeneous Dirichlet condition (\ref{Poisson_Homogeneous_Neumann_psi}).
\begin{lemma}\label{Solve_Poisson}
Let $\rho\in X^k(\Omega)$. The solution of the Poisson's problem
\begin{equation*}
\left\{
\begin{array}{rllr}
\Delta \psi&= -\partial_x\bar{\rho}  \quad &\text{in}\quad &\Omega\\
\psi &=0  \hspace{1.5 cm} &\text{on} \quad &\partial\Omega \\
\end{array}
\right.
\end{equation*}
satisfies that $\psi\in X^{k+1}(\Omega)$ with $||\psi||_{H^{k+1}(\Omega)}\lesssim ||\bar{\rho}||_{H^k(\Omega)}$ and  its Fourier expansion is given by
\begin{equation}\label{expresion_psi}
\psi(x,y)=\, \sum_{p\in\Z}\sum_{q\in\N}\left( \frac{ip}{p^2+ \left(q\tfrac{\pi}{2}\right)^2 }\right)\, \mathcal{F}_{\omega}[\bar{\rho}](p,q)\, \omega_{p,q}(x,y).
\end{equation}
\end{lemma}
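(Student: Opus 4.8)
The plan is to solve the problem spectrally, exploiting that by Lemma \ref{Base_X^k} together with the product structure of the basis, $\{\omega_{p,q}\}_{(p,q)\in\Z\times\N}$ is an orthonormal basis of $X^{j}(\Omega)$ for every $j$, and that by (\ref{pa^2_basis}) together with $\partial_x^2 a_p=-p^2 a_p$ each $\omega_{p,q}$ is an eigenfunction of the Laplacian,
\begin{equation*}
\Delta\omega_{p,q}=-\left(p^2+\left(q\tfrac{\pi}{2}\right)^2\right)\omega_{p,q},\qquad \partial_x\omega_{p,q}=ip\,\omega_{p,q},
\end{equation*}
satisfying the homogeneous Dirichlet condition (indeed, every even $y$-derivative of $b_q$ is a multiple of $b_q$ and hence vanishes on $\partial\Omega$). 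Since $\bar{\rho}\in X^k(\Omega)$ we may expand it as in (\ref{def_eigenfunction_expansion}); note that the zero horizontal average of $\bar{\rho}$ means only $p\neq 0$ modes occur. Writing the candidate $\psi=\sum_{p,q}c_{p,q}\,\omega_{p,q}$ and matching $\Delta\psi=-\partial_x\bar{\rho}$ term by term forces $c_{p,q}\left(p^2+\left(q\tfrac{\pi}{2}\right)^2\right)=ip\,\mathcal{F}_{\omega}[\bar{\rho}](p,q)$, which is exactly the coefficient appearing in (\ref{expresion_psi}); the denominator never vanishes because $q\geq 1$.

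The second step is to establish the key \emph{a priori} estimate, which simultaneously yields convergence of the series in $H^{k+1}(\Omega)$ and the bound $\|\psi\|_{H^{k+1}}\lesssim\|\bar{\rho}\|_{H^k}$. By Corollary \ref{thm_norma_expansion}, both norms are weighted $\ell^2$ sums of the coefficients $\mathcal{F}_{\omega}[\bar{\rho}](p,q)$, so it suffices to prove a pointwise symbol bound: for $s_1+s_2=k+1$ and $p\neq 0$,
\begin{equation*}
|p|^{2s_1}\left(q\tfrac{\pi}{2}\right)^{2s_2}\frac{p^2}{\left(p^2+\left(q\tfrac{\pi}{2}\right)^2\right)^2}\lesssim |p|^{2t_1}\left(q\tfrac{\pi}{2}\right)^{2t_2}\qquad\text{for some }t_1+t_2\leq k.
\end{equation*}
Writing $\lambda=\left(q\tfrac{\pi}{2}\right)^2$, if $s_1\geq 1$ I take $t_1=s_1-1,\ t_2=s_2$ and use $p^4\leq(p^2+\lambda)^2$; if $s_1=0$ I take $t_1=0,\ t_2=k$ and use the AM--GM inequality $(p^2+\lambda)^2\geq 4p^2\lambda$. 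The $L^2$ contribution is controlled likewise by $\tfrac{p^2}{(p^2+\lambda)^2}\leq\tfrac{1}{p^2+\lambda}\leq 1$. Summing over $(p,q)$ gives the estimate and, applied to the tails of the series, the Cauchy property that makes it converge in $H^{k+1}(\Omega)$.

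Finally, the limit $\psi$ lies in $X^{k+1}(\Omega)$ for free: it is the $H^{k+1}$-limit of finite linear combinations of the $\omega_{p,q}$, and by Lemma \ref{Base_X^k} these span exactly $X^{k+1}(\Omega)$, so no separate trace computation on the limit is required. That the equation holds follows since $\Delta$ of the partial sums equals $-\partial_x$ of the partial sums of $\bar{\rho}$, which converge to $-\partial_x\bar{\rho}$; uniqueness is the standard energy argument (if $\Delta\psi=0$ with $\psi|_{\partial\Omega}=0$ then $\psi\equiv 0$). I expect the only genuine work to be the case analysis in the symbol bound — in particular the endpoint $s_1=0$, where the gain must come entirely from the AM--GM factor rather than from the $p^4$ comparison. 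Once that is in hand, convergence, membership in $X^{k+1}(\Omega)$, and the estimate all follow together.
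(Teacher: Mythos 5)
Your proposal is correct, but it runs in the opposite order from the paper's proof, and the key mechanism is different. The paper first establishes existence and the bound $\|\psi\|_{H^{k+1}(\Omega)}\lesssim\|\bar{\rho}\|_{H^k(\Omega)}$ by a Galerkin scheme: it solves $\Delta\psi^{[m]}=-\mathbb{P}_m[\partial_x\bar{\rho}]$ with $\psi^{[m]}|_{\partial\Omega}=0$, obtains $m$-uniform $H^{k+1}$ bounds by taking $n$ derivatives ($n=0,\dots,k$), testing against $\partial^{n}\psi^{[m]}$, integrating by parts (the even-order conditions $\partial_y^{n}\psi^{[m]}|_{\partial\Omega}=0$, which follow from the structure of $\mathbb{P}_m$ and the Dirichlet condition, kill the boundary terms) and applying Young's inequality, and then passes to the limit to get $\psi\in X^{k+1}(\Omega)$; only \emph{afterwards}, knowing $\bar{\rho}\in X^k(\Omega)$ and $\psi\in X^{k+1}(\Omega)$, does it expand both sides in the basis $\{\omega_{p,q}\}$ and match coefficients to read off \eqref{expresion_psi}. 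You invert this: you take \eqref{expresion_psi} as the \emph{definition} of $\psi$ and prove the estimate by an explicit multiplier bound on the coefficients via Corollary \ref{thm_norma_expansion}, with the two-case symbol analysis (for $s_1\geq 1$ the comparison $p^4\leq\left(p^2+\lambda\right)^2$ with $t_1=s_1-1$, $t_2=s_2$; for $s_1=0$ the AM--GM bound $4p^2\lambda\leq\left(p^2+\lambda\right)^2$, which is indeed where the one-derivative gain at the pure-$y$ endpoint comes from), and you get $\psi\in X^{k+1}(\Omega)$ from the fact that the partial sums lie in every $X^j(\Omega)$ and $X^{k+1}(\Omega)$ is closed in $H^{k+1}(\Omega)$ by continuity of the trace of $\partial_y^n$ for $n\leq k$. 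Both arguments are sound and both lean on the Section \ref{Sec_2} machinery, but they buy different things: your spectral route is more quantitative and self-contained --- the regularity gain is visible as a pointwise symbol estimate, no compactness or limit extraction is needed beyond $\ell^2$ tail convergence, and uniqueness plus the validity of the equation follow by passing to the limit in $\Delta\psi_N=-\partial_x\bar{\rho}_N$ in $H^{k-1}(\Omega)$, exactly as you indicate --- whereas the paper's energy/Galerkin argument does not depend on having a diagonalizing eigenbasis and serves as the template that is reused essentially verbatim for the nonlinear evolution problem in Section \ref{Sec_4}. Your observation that only $p\neq 0$ modes occur (so the denominator $p^2+\left(q\tfrac{\pi}{2}\right)^2$ is harmless even without it, since $q\geq 1$) is correct and matches the paper's implicit use of the zero horizontal average of $\bar{\rho}$.
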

\begin{proof} We consider the sequence of  problems
\begin{equation*}
\left\{
\begin{array}{rllr}
\Delta \psi^{[m]}&=-\Pm[\pa_x \bar{\rho}]  \quad &\text{in}\quad &\Omega,\\
\psi^{[m]} &=0  \hspace{1.5 cm} &\text{on} \quad &\partial\Omega. \\
\end{array}
\right.
\end{equation*}
Taking $n$-derivatives with $n=0,\ldots,k$, testing again $\pa^n\psim$, integrating by parts and applying Young's inequality yields  $||\psim||_{H^{k+1}(\Omega)}\leq C||\Pm[\bar{\rho}]||_{H^k(\Omega)}\leq ||\bar{\rho}||_{H^k(\Omega)}$, since $\rho\in X^k(\Omega)$  (the constant $C$ does not depend on $m$). In addition, it is easy to check that $\pa^{n}_y\psim|_{\pa\Omega}=0$, for any even number $n$ (this is because of the definition of $\Pm$ and the boundary condition $\psim|_{\pa\Omega}=0$). These two facts allow us to pass to the limit in $m$ to find $\psi\in X^{k+1}(\Omega)$ solving \eqref{Poisson_Homogeneous_Neumann_psi}.\\

As $\bar{\rho}\in X^k(\Omega)$ and $\psi\in X^{k+1}(\Omega)$ we can expand
$$\bar{\rho}(x,y)=\sum_{p\in\Z}\sum_{q\in\N}\mathcal{F}_{\omega}[\bar{\rho}](p,q)\,\omega_{p,q}(x,y) \qquad \text{and} \qquad \psi(x,y)=\sum_{p\in\Z}\sum_{q\in\N}\mathcal{F}_{\omega}[\psi](p,q)\,\omega_{p,q}(x,y)$$
then:
\begin{align*}
-\partial_x \bar{\rho}(x,y)&=-\sum_{p\in\Z}\sum_{q\in\N} \,(ip)\,\mathcal{F}_{\omega}[\bar{\rho}](p,q)\, \omega_{p,q}(x,y),\\
\Delta \psi(x,y)&=-\sum_{p\in\Z}\sum_{q\in\N}\left(p^2+ \left(q\tfrac{\pi}{2}\right)^2 \right)\,\mathcal{F}_{\omega}[\psi](p,q)\, \omega_{p,q}(x,y).
\end{align*}
Consequently,  the following relation between the coefficients must be verified:
\begin{equation}\label{psi_rho_fourier}
\mathcal{F}[\psi](p,q)= \,\frac{ ip}{ p^2+\left( q\tfrac{\pi}{2}\right)^2 }\,\mathcal{F}_{\omega}[\bar{\rho}](p,q).
\end{equation}
\end{proof}
\begin{cor}
The velocity $\mathbf{u}=(u_1,u_2)=\nabla^\perp\psi$ from (\ref{Poisson_Homogeneous_Neumann_psi}) satisfies:
$$u_1\in Y^k(\Omega), u_2 \in X^k(\Omega)\quad  \text{and} \quad  ||\mathbf{u}||_{H^k(\Omega)}\lesssim ||\bar{\rho}||_{H^k(\Omega)}.$$
\end{cor}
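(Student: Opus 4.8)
\noindent
The plan is to read off the two components of $\mathbf{u}=\nabla^{\perp}\psi=(-\pa_y\psi,\pa_x\psi)$ directly from the expansion \eqref{expresion_psi} furnished by Lemma \ref{Solve_Poisson}, and to treat the space membership and the norm bound separately. In both cases I start from the fact that $\psi\in X^{k+1}(\Omega)$, so by \eqref{Space_X} it satisfies $\pa_y^n\psi|_{\pa\Omega}=0$ for every even $n\le(k+1)^{\star}$, and $\psi\in H^{k+1}(\Omega)$.

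For $u_2=\pa_x\psi$ I would argue that $\pa_x$ is tangential to $\pa\Omega=\{y=\pm1\}$, so it commutes both with restriction to the boundary and with $\pa_y$. Hence $\pa_y^n u_2|_{\pa\Omega}=\pa_x\big(\pa_y^n\psi|_{\pa\Omega}\big)=0$ for the even orders $n\le k^{\star}$ demanded by \eqref{Space_X}, the index check being $k^{\star}\le(k+1)^{\star}$ in both parities of $k$ (with equality when $k$ is odd). Together with $u_2\in H^{k}(\Omega)$, inherited from $\psi\in H^{k+1}(\Omega)$, this gives $u_2\in X^{k}(\Omega)$.

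For $u_1=-\pa_y\psi$ the key point is that differentiation in $y$ interchanges the two bases, converting the ``$X$-type'' (even derivatives vanish) structure of $\psi$ into the ``$Y$-type'' (odd derivatives vanish) structure required by \eqref{Space_Y}. Concretely, since $\pa_y^n u_1=-\pa_y^{n+1}\psi$, for an odd order $n\le k_{\star}$ the order $n+1$ is even and $\le(k+1)^{\star}$, so $\pa_y^n u_1|_{\pa\Omega}=0$; the largest needed order is $k_{\star}+1$, which matches $(k+1)^{\star}$ exactly in both parities. Equivalently, using \eqref{pa_b} one has $\pa_y\omega_{p,q}=(-1)^{q}q\tfrac{\pi}{2}\,\varpi_{p,q}$, whence
$$u_1=-\sum_{p\in\Z}\sum_{q\in\N}(-1)^{q}q\tfrac{\pi}{2}\,\mathcal{F}_{\omega}[\psi](p,q)\,\varpi_{p,q},$$
an expansion in the basis $\{\varpi_{p,q}\}$ of $Y^{k}(\Omega)$; this is exactly the identity $\pa_y\Pm[f]=\Qm[\pa_y f]$ recorded in Lemma \ref{projectors_properties}. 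Thus $u_1\in Y^{k}(\Omega)$.

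Finally, the estimate follows at once from $\|\mathbf{u}\|_{H^{k}}\le\|\nabla\psi\|_{H^{k}}\lesssim\|\psi\|_{H^{k+1}}\lesssim\|\bar{\rho}\|_{H^{k}}$, the last step being the bound in Lemma \ref{Solve_Poisson}. If a sharper, symbol-level argument is preferred, I would instead substitute \eqref{psi_rho_fourier} to get the Fourier multipliers $\mathcal{F}_{\omega}[u_2](p,q)=\tfrac{-p^2}{p^2+(q\pi/2)^2}\,\mathcal{F}_{\omega}[\bar{\rho}](p,q)$ and $\mathcal{F}_{\varpi}[u_1](p,q)=\tfrac{-(-1)^{q}(q\pi/2)\,ip}{p^2+(q\pi/2)^2}\,\mathcal{F}_{\omega}[\bar{\rho}](p,q)$, whose magnitudes are bounded by $1$ and by $1/2$ (the latter via $p^2+(q\pi/2)^2\ge 2|p|(q\pi/2)$); since the weights $|p|^{2s_1}|q\tfrac{\pi}{2}|^{2s_2}$ in Corollary \ref{thm_norma_expansion} depend only on the frequency $(p,q)$ and not on which basis is used, the desired bound follows term by term. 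The main obstacle here is purely the index bookkeeping that places $u_1$ in $Y^{k}$ rather than $X^{k}$: one must verify that the odd-derivative conditions of $Y^{k}$ up to order $k_{\star}$ are precisely those supplied by the even-derivative conditions of $\psi\in X^{k+1}$ up to order $(k+1)^{\star}$, which requires carefully tracking the parity-dependent definitions of $k^{\star}$, $k_{\star}$ and $(k+1)^{\star}$; everything else (tangentiality of $\pa_x$, boundedness of the multipliers) is routine.
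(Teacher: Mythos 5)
Your proposal is correct and takes essentially the same route as the paper, which states this corollary without proof as an immediate consequence of Lemma \ref{Solve_Poisson}: from $\psi\in X^{k+1}(\Omega)$ with $\|\psi\|_{H^{k+1}(\Omega)}\lesssim\|\bar{\rho}\|_{H^k(\Omega)}$, the tangential derivative $\pa_x$ preserves the even-order vanishing conditions (giving $u_2\in X^k$), while $\pa_y$ shifts them by one order to the odd-order conditions of \eqref{Space_Y} (giving $u_1\in Y^k$), exactly as you verify. Your parity bookkeeping ($k^{\star}\le(k+1)^{\star}$ and $k_{\star}+1=(k+1)^{\star}$ in both parities) is accurate, and the supplementary multiplier computation via \eqref{psi_rho_fourier} and Corollary \ref{thm_norma_expansion} is a valid, if optional, refinement.
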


\section{Local solvability of solutions in $X^k(\Omega)$}\label{Sec_4}

 To obtain a local existence result for a general smooth initial data in a general bounded domain for an \textit{active scalar} is far from being trivial. The presence of boundaries makes the well-posedness issues become more delicate (see for example \cite{Constantin-Nguyen_2} and \cite{Constantin-Nguyen_1}, in the case of SQG).\\

Here, we only focus  on our setting $\Omega$. Apart from working with the spaces $X^k(\Omega)$ and as a consequence, be careful with the special boundary conditions they impose, the proof in this section is a standard application of  Galerkin aproximations. For sake of completeness  we write the details below.\\

We return to the equations for the perturbation of the \textit{confined} IPM in $\Omega$:
\begin{equation}\label{perturbado_local_existence}
\left\{
\begin{array}{rl}
\partial_t\rho +\mathbf{u}\cdot\nabla \rho &=  u_2  \\
\mathbf{u}&=\nabla^{\perp}\psi \\
\nabla\cdot\mathbf{u}&=0
\end{array}
\right.
\end{equation}
where $\psi$ solves (\ref{Poisson_Homogeneous_Neumann_psi}) together with the no-slip condition $\mathbf{u}\cdot \mathbf{n}=0$ on $\partial\Omega$ and initial data $\rho(0)\in X^k(\Omega).$ Hence, we will prove the following result:

\begin{thm}\label{local_existence}
Let $k\in\N$ with $k\geq 3$ and an initial data $\rho(0)\in X^k(\Omega)$. Then, there exists a time $T>0$ and a constant $C$, both depending only on $||\rho||_{H^{3}(\Omega)}(0)$ and a unique solution $\rho\in C\left(0,T;X^{k}(\Omega)\right)$ of the equations (\ref{perturbado_local_existence}) such that: $$\sup_{0\leq t < T} ||\rho||_{H^{k}(\Omega)}(t)\leq C\, ||\rho||_{H^{k}(\Omega)}(0).$$
Moreover, for all $t\in[0,T)$ the following estimate holds:
\begin{equation}\label{criteria}
||\rho||_{H^{k}(\Omega)}(t)\leq ||\rho||_{H^{k}(\Omega)}(0)\,\exp\left[\widetilde{C}\int_{0}^{t}\left( ||\nabla\rho||_{L^{\infty}(\Omega)}(s)+||\nabla\mathbf{u}||_{L^{\infty}(\Omega)}(s)\right)\,ds \right].
\end{equation}
\end{thm}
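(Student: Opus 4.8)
The plan is to construct the solution through a Galerkin scheme adapted to the eigenfunction basis $\{\omega_{p,q}\}$, derive uniform-in-$m$ energy estimates that already contain the continuation bound \eqref{criteria}, and then pass to the limit by compactness.

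First I would set up the approximation. For $m\in\N$ consider
\[
\pa_t\rho^{[m]}=-\Pm\!\left[\mathbf{u}^{[m]}\cdot\nabla\rho^{[m]}\right]+u_2^{[m]},\qquad \rho^{[m]}(0)=\Pm[\rho(0)],
\]
where $\mathbf{u}^{[m]}=\nabla^{\perp}\psim$ and $\psim$ solves \eqref{Poisson_Homogeneous_Neumann_psi} with datum $\bar{\rho}^{[m]}$. Since $u_2^{[m]}=\pa_x\psim$ again lies in the span of $\{\omega_{p,q}\}_{|p|,q\le m}$ (by Lemma \ref{Solve_Poisson}) and $\Pm$ projects onto that finite-dimensional space, this is a system of ODEs for the finitely many coefficients $\mathcal{F}_{\omega}[\rho^{[m]}](p,q)$ whose right-hand side is quadratic, hence locally Lipschitz. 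Picard--Lindel\"of then yields a unique maximal solution on $[0,T_m)$, and by construction every $\rho^{[m]}(t)$ is a smooth function satisfying the boundary conditions defining $X^k(\Omega)$ exactly.

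Next come the a priori estimates, the heart of the argument. Applying $\pa^k$ (understood as $\pa_x^{s_1}\pa_y^{s_2}$ with $s_1+s_2=k$, and separately the $L^2$ case $k=0$), testing against $\pa^k\rho^{[m]}$, and using that $\Pm$ is self-adjoint, the top-order transport term $(\mathbf{u}^{[m]}\cdot\nabla\pa^k\rho^{[m]},\pa^k\rho^{[m]})$ vanishes after integration by parts, because $\nabla\cdot\mathbf{u}^{[m]}=0$ and the boundary term carries the factor $\mathbf{u}^{[m]}\cdot\mathbf{n}=u_2^{[m]}|_{\pa\Omega}=0$. The source term is handled by the identity behind Lemma \ref{u1+u2_H^k}: since $u_2^{[m]}=\pa_x\psim$ with $\psim\in X^{k+1}(\Omega)$, integration by parts (again with vanishing boundary terms) gives $(\pa^k u_2^{[m]},\pa^k\rho^{[m]})=(\pa^k u_2^{[m]},\pa^k\bar{\rho}^{[m]})=-\|\pa^k\mathbf{u}^{[m]}\|_{L^2(\Omega)}^2\le 0$, so it can simply be dropped. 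The remaining commutator $[\pa^k,\mathbf{u}^{[m]}\cdot\nabla]\rho^{[m]}$ is estimated in $L^2$ by a Moser/Kato--Ponce product inequality on $\Omega$, yielding
\[
\tfrac{d}{dt}\|\rho^{[m]}\|_{H^k(\Omega)}^2\lesssim\bigl(\|\nabla\rho^{[m]}\|_{L^\infty(\Omega)}+\|\nabla\mathbf{u}^{[m]}\|_{L^\infty(\Omega)}\bigr)\|\rho^{[m]}\|_{H^k(\Omega)}^2 .
\]
Gronwall's inequality gives \eqref{criteria} for each $\rho^{[m]}$. Specializing to $k=3$ and using the $2$D embedding $H^3(\Omega)\hookrightarrow W^{1,\infty}(\Omega)$ together with $\|\mathbf{u}^{[m]}\|_{H^3(\Omega)}\lesssim\|\rho^{[m]}\|_{H^3(\Omega)}$ turns this into a Riccati inequality $\tfrac{d}{dt}\|\rho^{[m]}\|_{H^3(\Omega)}^2\lesssim\|\rho^{[m]}\|_{H^3(\Omega)}^3$, which produces a time $T>0$ and a bound on $\sup_{[0,T]}\|\rho^{[m]}\|_{H^3(\Omega)}$ depending only on $\|\rho\|_{H^3(\Omega)}(0)$, uniformly in $m$ (in particular $T_m\ge T$). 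Feeding this $H^3$ bound back into the $H^k$ inequality and applying Gronwall once more yields the uniform bound $\sup_{[0,T]}\|\rho^{[m]}\|_{H^k(\Omega)}\le C\|\rho\|_{H^k(\Omega)}(0)$.

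Finally I would pass to the limit and settle uniqueness. The uniform bound in $L^\infty(0,T;H^k(\Omega))$ and the equation (which controls $\pa_t\rho^{[m]}$ in $H^{k-1}(\Omega)$, using that $H^{k-1}(\Omega)$ is an algebra for $k\ge3$) let me invoke Aubin--Lions to extract a subsequence converging strongly in $C([0,T];H^{k-1}(\Omega))$ and weakly-$*$ in $L^\infty(0,T;H^k(\Omega))$; the strong $H^{k-1}$ convergence suffices to pass to the limit in the quadratic nonlinearity in the sense of distributions and identify $\rho$ as a solution, while closedness of $X^k(\Omega)$ and the convergence $\Pm[\rho(0)]\to\rho(0)$ in $X^k(\Omega)$ (Lemma \ref{projectors_properties}) keep the limit in $X^k(\Omega)$. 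Weak lower semicontinuity preserves the energy bound, and a standard argument (weak continuity combined with continuity of $t\mapsto\|\rho(t)\|_{H^k(\Omega)}$ from the energy balance) upgrades the weak-in-time continuity to $\rho\in C([0,T];X^k(\Omega))$. Uniqueness follows from an $L^2$ estimate for the difference $\delta=\rho_1-\rho_2$: writing $\pa_t\delta+\mathbf{u}_1\cdot\nabla\delta=-(\mathbf{u}_1-\mathbf{u}_2)\cdot\nabla\rho_2+(u_{2,1}-u_{2,2})$ and using the order-zero bound $\|\mathbf{u}_1-\mathbf{u}_2\|_{L^2(\Omega)}\lesssim\|\delta\|_{L^2(\Omega)}$ together with $\nabla\rho_2\in L^\infty(\Omega)$ gives $\tfrac{d}{dt}\|\delta\|_{L^2(\Omega)}^2\lesssim\|\delta\|_{L^2(\Omega)}^2$, hence $\delta\equiv0$. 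The main obstacle throughout is the control of the boundary contributions in the energy identities and the validity of the commutator/product estimates on the bounded strip; both are overcome precisely by the $X^k(\Omega)$ structure, which is preserved by $\Pm$, forces $\psim\in X^{k+1}(\Omega)$, and makes every relevant boundary term vanish.
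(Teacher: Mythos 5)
Your proposal follows essentially the same route as the paper's proof: the same Galerkin scheme in the adapted basis $\{\omega_{p,q}\}$ with data $\Pm[\rho(0)]$, the same energy estimate in which the linear source contributes the sign-definite term $-\|\partial^s\mathbf{u}^{[m]}\|_{L^2(\Omega)}^2$ and the transport term is handled by the commutator bound \eqref{commutator}, the same Riccati closure at the $H^3(\Omega)$ level giving a uniform time $T$ followed by Gronwall at level $H^k(\Omega)$, and the same Aubin--Lions compactness with preservation of the $X^k(\Omega)$ structure and lower semicontinuity for \eqref{criteria}. The only deviations are minor or favorable: you bound $\partial_t\rho^{[m]}$ in $H^{k-1}(\Omega)$ where the paper settles for $H^{k-2}(\Omega)$ (either works for the compactness triple), you make explicit the $L^2$ uniqueness argument that the paper only asserts, and your uniform bound on $\Pm\left[\mathbf{u}^{[m]}\cdot\nabla\rho^{[m]}\right]$ tacitly relies on the fact---verified in the paper via the identity $\pa_y^2(b_q\,c_q)=-(q\pi)^2\,b_q\,c_q$ and $b_q(\pm1)=0$---that the product $\mathbf{u}^{[m]}\cdot\nabla\rho^{[m]}$ itself satisfies the even-order boundary conditions defining $X^{k-1}(\Omega)$, since $\Pm$ is not bounded on $H^s(\Omega)$ for arbitrary data.
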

\noindent
The general method of the proof is similar to that for proving existence of solutions to the Navier-Stokes and Euler equations which can be found in \cite{Majda-Bertozzi}.

The strategy of this section has two parts. First we find an approximate equation and approximate solutions that have two properties: (1) the approximate solutions exists for all time, (2) the solutions satisfy an analogous energy estimate. The second part is the passage to a limit in the approximation scheme to obtain a solution to the original equations.\\

\noindent
Before embarking on the proof, we will need some basic properties of the Sobolev spaces in bounded domains. In the next lemma, $D \subset \R^d$ is a bounded domain with smooth boundary $\partial D$.

\begin{lemma}
For $s\in\N$, the following estimates hold:
\begin{itemize}
	\item If $f,g\in H^s(D)\cap \mathcal{C}(D)$, then
\begin{equation}\label{Leibniz}
||f\,g||_{H^s(D)}\lesssim \left(||f||_{H^s(D)}\,||g||_{L^{\infty}(D)}+||f||_{L^{\infty}(D)}\,||g||_{H^s(D)}\right)
\end{equation}

	\item If $f\in H^s(D)\cap \mathcal{C}^1(D)$ and $g\in H^{s-1}(D)\cap \mathcal{C}(D)$, then for $|\alpha|\leq s$ we have that:
\begin{equation}\label{commutator}
||\partial^{\alpha} (f g)-f\partial^{\alpha} g||_{L^2(D)}\lesssim || f||_{W^{1,\infty}(D)}\, ||g||_{H^{s-1}(D)}+|| f||_{H^s(D)}\, ||g||_{L^{\infty}(D)}
\end{equation}
\end{itemize}
Moreover, the following Sobolev embeddings hold:
\begin{itemize}
	\item $W^{s,p}(D)\subseteq L^{q}(D)$ continuously if $s<n/p$ and $p\leq q \leq np/(n-sp)$.
	
	\item  $W^{s,p}(D)\subseteq \mathcal{C}^{k}(\overline{D})$ consinuously is $s>k+n/p.$
\end{itemize}
\end{lemma}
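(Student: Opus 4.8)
The plan is to treat the four assertions separately, all via standard \emph{tame} (Moser-type) estimates and the classical embedding machinery on bounded domains. For the product estimate \eqref{Leibniz}, I would expand $\partial^{\alpha}(fg)$ with the Leibniz rule, $\partial^{\alpha}(fg)=\sum_{\beta\le\alpha}\binom{\alpha}{\beta}\partial^{\beta}f\,\partial^{\alpha-\beta}g$, for each multi-index $|\alpha|\le s$, and bound a generic summand in $L^2(D)$. Writing $j=|\beta|$, H\"older's inequality with exponents $2s/j$ and $2s/(s-j)$ gives $\|\partial^{\beta}f\,\partial^{\alpha-\beta}g\|_{L^2}\le\|\partial^{\beta}f\|_{L^{2s/j}}\|\partial^{\alpha-\beta}g\|_{L^{2s/(s-j)}}$, and the Gagliardo--Nirenberg interpolation inequality on $D$ then yields $\|\partial^{\beta}f\|_{L^{2s/j}}\lesssim\|f\|_{L^{\infty}}^{1-j/s}\|f\|_{H^s}^{j/s}$ together with the mirror bound for $g$. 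Multiplying and invoking Young's inequality collapses every term into the right-hand side of \eqref{Leibniz}.

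The commutator estimate \eqref{commutator} I would obtain from the same Leibniz expansion, this time observing that the $\beta=0$ term is exactly $f\,\partial^{\alpha}g$ and therefore cancels, so that $\partial^{\alpha}(fg)-f\,\partial^{\alpha}g=\sum_{0<\beta\le\alpha}\binom{\alpha}{\beta}\partial^{\beta}f\,\partial^{\alpha-\beta}g$ carries at least one derivative on $f$. The two borderline summands---one derivative on $f$ with $s-1$ on $g$, and all $|\alpha|\le s$ derivatives on $f$ with none on $g$---produce precisely the two terms $\|f\|_{W^{1,\infty}}\|g\|_{H^{s-1}}$ and $\|f\|_{H^s}\|g\|_{L^{\infty}}$ on the right of \eqref{commutator}; the intermediate summands are once more controlled by Gagliardo--Nirenberg interpolation between these endpoints.

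For the two embeddings I would reduce to the Euclidean case by a bounded extension operator $E\colon W^{s,p}(D)\to W^{s,p}(\R^n)$, which exists because $\partial D$ is smooth. Applying the classical Gagliardo--Sobolev--Nirenberg inequality to $Ef$ gives $W^{s,p}(\R^n)\hookrightarrow L^{q}(\R^n)$ for $p\le q\le np/(n-sp)$ when $sp<n$, while the Morrey inequality gives $W^{s,p}(\R^n)\hookrightarrow C^{k}(\R^n)$ when $s-k>n/p$; restricting back to $D$ and using the boundedness of $E$ yields both statements, the lower endpoint $q\ge p$ being automatic on the bounded set $D$.

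The main obstacle is not any single step but the uniform handling of the interpolation exponents: one must verify that for every $0\le j\le s$ the Gagliardo--Nirenberg inequality applies with the exponents $2s/j$ and $2s/(s-j)$---interpreting $j=0$ and $j=s$ as the $L^{\infty}$ and $L^2$ endpoints---which on a bounded domain relies on the cone (equivalently, extension) property of $D$. Once that is secured, the remainder is bookkeeping over multi-indices, and the product, commutator, and embedding statements follow in turn.
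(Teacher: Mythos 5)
Your proposal is correct, and it is essentially the argument the paper relies on: the paper does not prove this lemma itself but cites \cite[p.~280]{Ferrari} and the classical references (\cite{Beale-Kato-Majda}, \cite{Klainerman-Majda}, \cite{Majda_1}, \cite{Adams}), whose proofs are precisely your Leibniz-plus-Gagliardo--Nirenberg scheme for \eqref{Leibniz} and \eqref{commutator} and the extension-operator reduction for the embeddings. You also correctly identify the one point where the bounded domain matters --- the validity of the interpolation inequalities on $D$ via the extension (cone) property, which lets the inhomogeneous $H^s$ norms on the right-hand sides absorb the lower-order corrections --- so nothing is missing.
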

\begin{proof}
 See \cite[p.~280]{Ferrari} and references therein.
\end{proof}

\begin{proof}[Proof of Theorem \ref{local_existence}]

We firstly construct  approximate
equations by using a smoothing procedure called Galerkin method.
The $m^{\text{th}}$-Galerkin approximation of (\ref{perturbado_local_existence}) is the following  system:

\begin{equation}\label{rho_sharp}
\left\{
\begin{array}{rl}
\partial_t\rho^{[m]} +\mathbb{P}_m\left[\mathbf{u}^{[m]}\cdot\nabla \rho^{[m]} \right]&=  u_2^{[m]}  \\
\mathbf{u}^{[m]}&=\nabla^\perp \psi^{[m]}\\
\rho^{[m]}|_{t=0}&=\mathbb{P}_m[\rho](0)
\end{array}
\right.
\end{equation}
where
\begin{equation}\label{Poisson_rho_sharp}
\left\{
\begin{array}{rllr}
\Delta \psi^{[m]}&= -\partial_x\rho^{[m]}  \quad &\text{in}\quad &\Omega\,\\
\psi^{[m]} &=0  \hspace{1.5 cm} &\text{on} \quad &\partial\Omega. \\
\end{array}
\right.
\end{equation}
and with $\rho(0)\in X^k(\Omega)$. Since the initial data in \eqref{rho_sharp} belongs to $\Pm L^2(\Omega)$ and because of the structure of the equations, we look for solutions of the form: $$\rho^{[m]}(t)=\sum_{\substack{|p| \leq m\\ p\in \Z}}\sum_{\substack{q \leq m\\ q\in\N}} \mathfrak{c}^{[m]}_{p,q}(t) \omega_{p,q}(x,y).$$
Then, by Lemma (\ref{Solve_Poisson}) we get:
$$\psi^{[m]}(t)=\sum_{\substack{|p| \leq m\\ p\in \Z}}\sum_{\substack{q \leq m\\ q\in\N}}\left(\frac{ip}{p^2+\left(q\tfrac{\pi}{2}\right)^2}  \right) \mathfrak{c}^{[m]}_{p,q}(t) \omega_{p,q}(x,y).$$

In this way, \eqref{rho_sharp} is reduced to a finite dimensional ODE system for the coefficients $\mathfrak{c}^{[m]}_{p,q}(t)$ for $\{|p|, q\}\leq m$, and we can apply Picard's theorem to find a solution on a time of existence depending on $m$. Next, we will use energy estimates to prove that there is a time of existence $T$,  uniform in $m$, for every solution $\rho^{[m]}(t)$ of \eqref{rho_sharp} and a limit $\rho(t)$ which will solve $\eqref{perturbado_local_existence}$. To do it, we recall that:
$$\rho^{[m]}=\mathbb{P}_m\left[\rho^{[m]}\right] \qquad \text{and} \qquad \mathbf{u}^{[m]}=\left(u_1^{[m]},u_2^{[m]}\right)=\left(\mathbb{Q}_m\left[u_1^{[m]}\right],\mathbb{P}_m\left[u_2^{[m]}\right]\right).$$
Taking derivatives $\partial^{s}$, with $|s|\leq k$ on the first equation of (\ref{rho_sharp}) and then taking the  $L^2(\Omega)$ inner product with $\partial^s \rho^{[m]}$, we obtain:
\begin{equation*}
\left( \partial_t \partial^s \rho^{[m]},\partial^s \rho^{[m]}\right)=\left( \partial^s u_2^{[m]},\partial^s \rho^{[m]}\right) - \left(\partial^s \mathbb{P}_m  \left[\mathbf{u}^{[m]}\cdot\nabla \rho^{[m]} \right],\partial^s \rho^{[m]}\right)=I-II.
\end{equation*}
For the first term, since $\psi^{[m]}$ solves the Poisson's problem (\ref{Poisson_rho_sharp}), integrations by parts give us:
\begin{equation}\label{I}
I=\left( \partial^s \partial_x\psi^{[m]}, \partial^s\rho^{[m]}\right)=\left(\partial^s \psi^{[m]},\partial^s \Delta \psi^{[m]}\right)=-||\pa^s\mathbf{u}^{[m]}||_{L^2(\Omega)}^2
\end{equation}
thanks to $\pa^{n}_y\psim|_{\pa\Omega}=0$, for any even number $n$.
For the second one, we need to distinguish between an even or odd number of $y$-derivatives. In any case, the properties of $\mathbb{P}_m, \mathbb{Q}_m$ given by Lemma (\ref{projectors_properties}) and the commutator estimate (\ref{commutator}) with $f=\mathbf{u}^{[m]}$ and $g=\nabla\rho^{[m]}$ give us the inequality:
\begin{equation}\label{II}
II\lesssim ||\pa^s\rho^{[m]}||_{L^2(\Omega)}\left( ||\nabla \mathbf{u}^{[m]}||_{L^{\infty}(\Omega)}||\rho^{[m]}||_{H^k(\Omega)}+||\mathbf{u}^{[m]}||_{H^k(\Omega)}||\nabla \rho^{[m]}||_{L^{\infty}(\Omega)} \right).
\end{equation}
Summing over $|s|\leq k$ and putting together (\ref{I}) and (\ref{II}) we obtain:
\begin{align*}\label{estimate_m_BKM}
\tfrac{1}{2}\partial_t ||\rho^{[m]}||_{H^{k}(\Omega)}^2 &\lesssim ||\rho^{[m]}||_{H^{k}(\Omega)}\left( ||\nabla \mathbf{u}^{[m]}||_{L^{\infty}(\Omega)}||\rho^{[m]}||_{H^k(\Omega)}+||\mathbf{u}^{[m]}||_{H^k(\Omega)}||\nabla \rho^{[m]}||_{L^{\infty}(\Omega)} \right)\nonumber
\end{align*}
and as $\mathbf{u}^{[m]}=\nabla^{\perp}\psi^{[m]}$ where $\psi^{[m]}$ solves (\ref{Poisson_rho_sharp}) by Lemma (\ref{Solve_Poisson}) we get the bound $||\mathbf{u}^{[m]}||_{H^k(\Omega)} \lesssim ||\rho^{[m]}||_{H^k(\Omega)}$.
Therefore, we finally obtain that:
\begin{equation}\label{estimate_Hk}
\tfrac{1}{2}\partial_t ||\rho^{[m]}||_{H^{k}(\Omega)}^2 \lesssim ||\rho^{[m]}||_{H^{k}(\Omega)}^2\left( ||\nabla \mathbf{u}^{[m]}||_{L^{\infty}(\Omega)}+||\nabla \rho^{[m]}||_{L^{\infty}(\Omega)} \right)\lesssim ||\rho^{[m]}||_{H^{k}(\Omega)}^2 ||\rho^{[m]}||_{H^3(\Omega)}
\end{equation}
where the last inequality is true provided that $k\geq 3$ due to the Sobolev embedding $L^{\infty}(\Omega)\hookrightarrow H^{2}(\Omega)$.
Hence, for all $m$ and $0\leq t < T\leq\left( c\, ||\rho||_{H^{3}(\Omega)}(0)\right)^{-1}$ we have that:
\begin{equation}\label{estimate_H3}
||\rho^{[m]}||_{H^{3}(\Omega)}(t)\leq \frac{||\mathbb{P}_m[\rho]||_{H^{3}(\Omega)}(0)}{1-c\,t\, ||\mathbb{P}_m[\rho]||_{H^{3}(\Omega)}(0)}\leq \frac{||\rho||_{H^{3}(\Omega)}(0)}{1-c\,t\, ||\rho||_{H^{3}(\Omega)}(0)}
\end{equation}
and, in particular	
$$\sup_{0\leq t< T}||\rho^{[m]}||_{H^{3}(\Omega)}(t)\leq \frac{||\rho||_{H^{3}(\Omega)}(0)}{1-c\,T\, ||\rho||_{H^{3}(\Omega)}(0)}.$$
Applying (\ref{estimate_H3}) in the last term of (\ref{estimate_Hk}), we obtain for all $m$ and $0\leq t< T$ by Gronwall's lemma that:
\begin{align*}
||\rho^{[m]}||_{H^{k}(\Omega)}(t)&\leq ||\mathbb{P}_m[\rho^{[m]}]||_{H^{k}(\Omega)}(0)\,\exp\left[c \int_{0}^t \frac{||\rho||_{H^{3}(\Omega)}(0)}{1-c\,s\, ||\rho||_{H^{3}(\Omega)}(0)}\, ds \right]\nonumber\\
&\leq ||\rho||_{H^{k}(\Omega)}(0)\,\exp\left[c \int_{0}^t \frac{||\rho||_{H^{3}(\Omega)}(0)}{1-c\,s\, ||\rho||_{H^{3}(\Omega)}(0)}\, ds \right]
\end{align*}
and, in particular	
\begin{equation}\label{control_H^k_time_t0}
\sup_{0\leq t< T}||\rho^{[m]}||_{H^{k}(\Omega)}(t)\leq C\,||\rho||_{H^{k}(\Omega)}(0)
\end{equation}
where $C$ is a constant depending only on $||\rho||_{H^3(\Omega)}(0).$\\

Therefore, the family $\rho^{[m]}$ is uniformly bounded, with respect to $m$, in $L^{\infty}\left(0,T;H^{k}(\Omega)\right)$. One consequence of the Banach-Alaoglu theorem  (see \cite{Royden}) is that a bounded sequence $||\rho^{[m]}||_{H^{k}(\Omega)}\leq K$ has a subsequence that converges weakly to some limit in $H^{k}(\Omega)$, which is the dual of a separable Ba\nolinebreak nach space. This is $\rho^{[m]}(t)\rightharpoonup\rho(t)$ in $H^{k}(\Omega)$ for $0\leq t< T$.\\

Moreover, the family  $\partial_t \rho^{[m]}$ is uniformly bounded in $L^{\infty}\left(0,T;H^{k-2}(\Omega)\right)$. By (\ref{rho_sharp}) we have that:
\begin{align*}
\sup_{0\leq t< T}||\partial_t \rho^{[m]}||_{H^{k-2}(\Omega)}(t)&=\sup_{0\leq t< T}||u_2^{[m]}-\mathbb{P}_m\left[\mathbf{u}^{[m]}\cdot\nabla \rho^{[m]}\right]||_{H^{k-2}(\Omega)}(t) \\
&\leq \sup_{0\leq t< T}|| u_2^{[m]}||_{H^{k-2}(\Omega)}(t)+\sup_{0\leq t< T}||\mathbb{P}_m\left[\mathbf{u}^{[m]}\cdot\nabla \rho^{[m]} \right]||_{H^{k-2}(\Omega)}(t).
\end{align*}
 We need to show that $\mathbf{u}^{[m]}\cdot\nabla \rho^{[m]}\in X^{k-1}(\Omega)$ to apply Lemma (\ref{projectors_properties}), for $k\geq 3$, and to get:
\begin{align*}
||\mathbb{P}_m\left[\mathbf{u}^{[m]}\cdot\nabla \rho^{[m]} \right]||_{H^{k-2}(\Omega)}(t)&\leq ||\mathbf{u}^{[m]}\cdot\nabla \rho^{[m]} ||_{H^{k-2}(\Omega)}(t)\\
&\lesssim  \left[||\mathbf{u}^{[m]}||_{H^{k-2}(\Omega)}\,||\nabla \rho^{[m]} ||_{L^{\infty}(\Omega)}+||\mathbf{u}^{[m]}||_{L^{\infty}(\Omega)}\,||\nabla \rho^{[m]}||_{H^{k-2}(\Omega)}\right](t)\\
&\lesssim ||\mathbf{u}^{[m]}||_{H^{k}(\Omega)}(t)\, ||\rho^{[m]} ||_{H^{k}(\Omega)}(t)
\end{align*}
where in the last inequalities we have used (\ref{Leibniz}) and the Sobolev embedding $L^{\infty}(\Omega)\hookrightarrow H^{2}(\Omega)$.

  Checking that $\mathbf{u}^{[m]}\cdot\nabla \rho^{[m]}\in X^{k-1}(\Omega)$  reduces to see that $\partial_y^{n}\left(\mathbf{u}^{[m]}\cdot\nabla \rho^{[m]}\right)|_{\partial\Omega}=0$ for any even natural number $n$. We start, with the following observation:
$$\mathbf{u}^{[m]}\cdot\nabla \rho^{[m]}=\mathbb{Q}_m \left[u_1^{[m]}\right]\mathbb{P}_m\left[\pa_x\rho^{[m]}\right]+\mathbb{P}_m\left[u_2^{[m]}\right] \mathbb{Q}_m\left[\pa_y \rho^{[m]}\right]$$
and the fact, due to (\ref{pa_b}) and (\ref{pa_c}), that:
\begin{align*}
\pa_y^2(b_q \,c_q)(y)&=(\pa_y^2 b_q)(y)\, c_q(y)+2 (\pa_y b_q)(y)\, (\pa_y c_q)(y)+b_q(y)\, (\pa_y^2 c_q)(y)\\
&=(-1)(q\,\pi)^2 b_q(y)\,c_q(y).
\end{align*}
Iterating this procedure and using that $b_q(\pm 1)=0$ we prove the boundary conditions for the derivatives of even order of the non-linear term.

As before, by Lemma (\ref{Solve_Poisson}) we obtain the bound $||\mathbf{u}^{[m]}||_{H^{k}(\Omega)}(t)\lesssim ||\rho^{[m]}||_{H^{k}(\Omega)}(t)$ and putting all together we obtain:
\begin{align*}
\sup_{0\leq t< T}||\partial_t \rho^{[m]}||_{H^{k-2}(\Omega)}(t)&\lesssim \sup_{0\leq t< T} ||\rho^{[m]}||_{H^{k}(\Omega)}(t)\left[1+||\rho^{[m]}||_{H^{k}(\Omega)}(t)\right]\\
&\leq C \,||\rho||_{H^{k}(\Omega)}(0) \left[1+C \,||\rho||_{H^{k}(\Omega)}(0)\right]
\end{align*}
thanks to (\ref{control_H^k_time_t0}). Hence, the family of time derivatives $\partial_t \rho^{[m]}(t)$ is uniformly bounded in $L^{\infty}\left(0,T;H^{k-2}(\Omega)\right)$.

Therefore, as we have seen above, the family of time derivatives $\partial_t \rho^{[m]}(t)$ is uniformly bounded in $L^{\infty}\left(0,T;H^{k-2}(\Omega)\right)$. Then, by Banach-Alaoglu theorem, $\pa_t\rho^{[m]}(t)$ has a subsequence that converges weakly to some limit in $H^{k-2}(\Omega)$ for $0\leq t<T$.

Moreover, by virtue of Aubin-Lions's compactness lemma  (see for instance \cite{Lions}) applied with the triple $H^{k}(\Omega)\subset\subset H^{k-1}(\Omega)\subset H^{k-2}(\Omega)$ we obtain that the convergence $\rho^{[m]} \rightarrow \rho$ is strong in $C(0,T;H^{k-1}(\Omega))$.
As $\mathbf{u}^{[m]}=\nabla^{\perp}\psi^{[m]}$ where $\psi^{[m]}$ solves (\ref{Poisson_rho_sharp}) and the convergence $\rho^{[m]} \rightarrow \rho$ is strong in $C(0,T;H^{k-1}(\Omega))$, we obtain the strong convergence $\mathbf{u}^{[m]} \rightarrow \mathbf{u}$ in $C(0,T; Y^{k-1}(\Omega)\times X^{k-1}(\Omega))$. Using these facts, we may pass to the limit in the non-linear part of (\ref{rho_sharp}) to see that $\mathbb{P}_m[\mathbf{u}^{[m]}\cdot\nabla\rho^{[m]}]\rightarrow \mathbf{u}\cdot\nabla\rho$ in $C(0,T; H^{k-2}(\Omega))$ as follows:
\begin{align*}
||\mathbb{P}_m[\mathbf{u}^{[m]}\cdot\nabla\rho^{[m]}]- \mathbf{u}\cdot\nabla\rho||_{H^{k-2}(\Omega)}&=||\mathbb{P}_m[\mathbf{u}^{[m]}\cdot\nabla\rho^{[m]}] \pm \mathbf{u}^{[m]}\cdot\nabla\rho^{[m]}\pm \mathbf{u}^{[m]}\cdot\nabla\rho - \mathbf{u}\cdot\nabla\rho||_{H^{k-2}(\Omega)}\\
&\leq \big|\big|(\mathbb{P}_m-\mathbb{I})[\mathbf{u}^{[m]}\cdot\nabla\rho^{[m]}]\big|\big|_{H^{k-2}(\Omega)}+\big|\big| \mathbf{u}^{[m]}\cdot\nabla(\rho^{[m]}-\rho)\big|\big|_{H^{k-2}(\Omega)}\\
&+\big|\big|(\mathbf{u}^{[m]}-\mathbf{u})\cdot\nabla\rho\big|\big|_{H^{k-2}(\Omega)}\rightarrow 0 \qquad \text{as}\quad  m\to \infty.
\end{align*}
In the limit, we use the fact that $\lim_{m\to\infty} ||\mathbb{P}_m[f]-f||_{H^s(\Omega)}=0$ for $f\in X^s(\Omega)$, together with the convergences of $\mathbf{u}^{[m]}\rightarrow \mathbf{u}$ and $\rho^{[m]}\rightarrow \rho$ and \eqref{Leibniz}, for  $k\geq 3$.\\

Now, from (\ref{rho_sharp}), we have that
$\partial_t\rho^{[m]} =  u_2^{[m]}-\mathbb{P}_m\left[\mathbf{u}^{[m]}\cdot\nabla \rho^{[m]} \right]\rightarrow u_2- \mathbf{u}\cdot\nabla\rho$ in $C(0,T;H^{k-2}(\Omega))$.
Since $\rho^{[m]}\rightarrow \rho$ in $C(0,T;H^{k-1}(\Omega))$, the distribution limit of $\pa_t \rho^{[m]}$ must be  $\pa_t\rho$  for the Closed Graph theorem \cite{Brezis}. So, in particular, it follows that $\rho(t)$ is the unique classical solution of (\ref{perturbado_local_existence}) which lies in $C(0,T; H^{k-1}(\Omega))$.
Then, to show that $\rho\in C(0,T; H^{k}(\Omega))$ we follow \cite[p.~110]{Majda-Bertozzi}.

Firstly, we recall that $\rho\in L^{\infty}(0,T;H^k(\Omega))\subset L^{2}(0,T;H^k(\Omega))$ and we start proving that
$\rho(t)$ is continuous on $[0,T)$ in the weak topology of $H^k(\Omega)$. To prove that  $\rho\in C_{W}(0,T; H^{k}(\Omega))$, we define the dual pairing  of $(H^{s})^{\star}(\Omega)$ and $H^{s}(\Omega)$ as $[\cdot,\cdot]: (H^{s}(\Omega))^{\star}\times H^{s}(\Omega)\rightarrow \R$ given by $[\varphi,f]:=\varphi[f]$. Hence, because $\rho^{[m]}\rightarrow \rho $ in $C(0,T;H^{k-1}(\Omega))$, it follows that $[\varphi,\rho^{[m]}(t)]\rightarrow [\varphi,\rho(t)]$ uniformly on $[0,T)$ for any $\varphi \in (H^{k-1}(\Omega))^{\star}$.

Using the fact that $(H^{k-1}(\Omega))^{\star}$ is dense in $(H^{k}(\Omega))^{\star}$ by means of an $\epsilon$-argument together with (\ref{control_H^k_time_t0}), we have
$[\varphi,\rho^{[m]}]\rightarrow [\varphi,\rho]$ uniformly on $[0,T)$ for any $\varphi\in (H^{k}(\Omega))^{\star}$. This fact implies that $\rho \in C_{W}(0,T;H^k(\Omega))$.

By virtue of the fact that $\rho\in C_{W}(0,T; H^{k}(\Omega))$, it suffices to show that the norm $||\rho||_{H^k(\Omega)}(t)$ is a contin\nolinebreak uous function of time to get that $\rho\in C(0,T; H^{k}(\Omega))$.\\

\noindent
Recall the relation for the uniform $H^{k}(\Omega)$ norm  for the approximations:
$$||\rho^{[m]}||_{H^k(\Omega)}(t)\leq \frac{||\rho||_{H^k(\Omega)}(0)}{1-C\, t\, ||\rho||_{H^k(\Omega)}(0)}=||\rho||_{H^k(\Omega)}(0)+\frac{C\, t\, ||\rho||_{H^k(\Omega)}^2(0)}{1-C\, t\, ||\rho||_{H^k(\Omega)}(0)} \qquad \text{for all }\quad 0\leq t< T.$$
For fixed time $t\in [0,T)$ we have $||\rho||_{H^k(\Omega)}(t)\leq \liminf_{m\to\infty}||\rho^{[m]}||_{H^k(\Omega)}(t)$. Using it in the above expression, we obtain:
$$||\rho||_{H^{k}(\Omega)}(t)\leq ||\rho||_{H^k(\Omega)}(0)+\frac{C\, t\, ||\rho||_{H^k(\Omega)}^2(0)}{1-C\, t\, ||\rho||_{H^k(\Omega)}(0)}.$$
On one hand, from the fact that $\rho\in C_{W}(0,T;H^k(\Omega))$, we have that $||\rho||_{H^k(\Omega)}(0)\leq \liminf_{t\to 0^{+}} ||\rho||_{H^k(\Omega)}(t)$. On the other hand, the above expression gives us that $\limsup_{t\to 0^{+}} ||\rho||_{H^k(\Omega)}(t)\leq ||\rho||_{H^k(\Omega)}(0)$. Then, in particular $\lim_{t\to 0^{+}} ||\rho||_{H^k(\Omega)}(t)=||\rho||_{H^k(\Omega)}(0)$. This gives us strong right continuity at $t=0$.

It remains to prove continuity of the $||\cdot||_{H^k(\Omega)}(t)$ norm of the solution at times other than
the initial time. Consider a time $t^{\star}\in (0,T)$ and the solution $\rho(t^{\star})\in H^k(\Omega)$. At this fixed time, we define $\rho^{\star}(0):=\rho(t^{\star})$. So we can take, $\rho^{\star}(0)$ as initial data and construct solution as above by solving regularized equation (\ref{rho_sharp}). Following the argument we used above
 to show that $||\rho||_{H^k(\Omega)}(t)$ is continuous at $t=0$, we also conclude that it is continuous as $t=t^{\star}$.
Because $t^{\star}\in(0,T)$ is arbitrary, we have just showed that $||\rho||_{H^k(\Omega)}(t)$ is a continuous function on $[0,T)$. In consequence, we have proved that $\rho\in C(0,T;H^k(\Omega))$.

Since for every $m\in\N$ we have $\rho^{[m]}=\mathbb{P}_m[\rho^{[m]}]\in X^{k}(\Omega)$, i.e. $\partial_y^{n}\rho^{[m]}|_{\partial\Omega}=0$ for any even number $n$ and this property is closed, we obtain that the limiting function also has the desired property, which concludes that the solution $\rho$ lies in $C\left(0,T;X^{k}(\Omega)\right).$\\

Finally, applying the Gronwall's lemma on the above estimate (\ref{estimate_Hk}) and the previous convergence results, for all $t\in[0,T)$ we deduce:
\begin{align*}
||\rho^{[m]}||_{H^{k}(\Omega)}(t)&\leq ||\rho^{[m]}||_{H^{k}(\Omega)}(0)\,\exp\left[\widetilde{C} \int_{0}^{t}\left( ||\nabla\rho^{[m]}||_{L^{\infty}(\Omega)}(s)+||\nabla\mathbf{u}^{[m]}||_{L^{\infty}(\Omega)}(s)\right)\,ds \right]\\
&\leq ||\rho||_{H^{k}(\Omega)}(0)\,\exp\left[\widetilde{C}\int_{0}^{t}\left( ||\nabla\rho||_{L^{\infty}(\Omega)}(s)+||\nabla\mathbf{u}||_{L^{\infty}(\Omega)}(s)\right)\,ds \right]
\end{align*}
and by lower
semicontinuity we obtain (\ref{criteria}).

\end{proof}

\begin{thm}
If $\rho(t)$ is a solution of (\ref{perturbado_local_existence}) in the class $C\left(0,T,X^{k}(\Omega)\right)$ with $\rho(0)\in X^k(\Omega)$, and if $T=T^{\star}$ is the first time such that $\rho(t)$ is not contained in this class, then
\begin{equation*}
\int_{0}^{T^{\star}}\left(||\nabla\mathbf{u}||_{L^{\infty}(\Omega)}(s)+||\nabla\rho||_{L^{\infty}(\Omega)}(s)\right)\, ds=\infty.
\end{equation*}
\end{thm}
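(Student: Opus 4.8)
The plan is to argue by contradiction, using the continuation estimate (\ref{criteria}) already established in the proof of Theorem \ref{local_existence} as the sole analytic input. Suppose the conclusion fails, so that $T^{\star}<\infty$ and yet
\[
M:=\int_{0}^{T^{\star}}\left(\|\nabla\mathbf{u}\|_{L^{\infty}(\Omega)}(s)+\|\nabla\rho\|_{L^{\infty}(\Omega)}(s)\right)ds<\infty .
\]
First I would feed this finiteness directly into (\ref{criteria}): for every $t\in[0,T^{\star})$,
\[
\|\rho\|_{H^{k}(\Omega)}(t)\leq \|\rho\|_{H^{k}(\Omega)}(0)\,e^{\widetilde{C}M}=:K<\infty ,
\]
so the top-order norm stays bounded, $\sup_{0\leq t<T^{\star}}\|\rho\|_{H^{k}(\Omega)}(t)\leq K$. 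In particular the subcritical norm obeys $\|\rho\|_{H^{3}(\Omega)}(t)\leq K$ uniformly on $[0,T^{\star})$, which is exactly the quantity that governs the local existence time.

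Next I would invoke Theorem \ref{local_existence} as a \emph{restarting} mechanism. Recall that there the guaranteed existence time depends only on the $H^{3}(\Omega)$ norm of the datum, quantitatively through $T\sim\bigl(c\,\|\rho\|_{H^{3}(\Omega)}(0)\bigr)^{-1}$ (cf.\ the bound (\ref{estimate_H3})). Fix any $t_{0}\in[0,T^{\star})$ and take $\rho(t_{0})$ as a new initial datum. Since, by hypothesis, the solution lies in the class $C(0,T^{\star};X^{k}(\Omega))$ for $t<T^{\star}$, we have $\rho(t_{0})\in X^{k}(\Omega)$ — consistent with the structural fact from Section \ref{Sec_2} that the boundary conditions $\partial_{y}^{n}\rho|_{\partial\Omega}=0$ for even $n$ are propagated by the flow and that $X^{k}(\Omega)$ is closed. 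Because $\|\rho(t_{0})\|_{H^{3}(\Omega)}\leq K$, Theorem \ref{local_existence} furnishes a solution emanating from $\rho(t_{0})$, in the class $C(\,\cdot\,;X^{k}(\Omega))$, on a time interval of length at least $\tau_{0}:=(cK)^{-1}>0$, a length \emph{independent} of the choice of $t_{0}$.

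Finally, choosing $t_{0}$ so close to $T^{\star}$ that $T^{\star}-t_{0}<\tau_{0}$, this restarted solution lives on $[t_{0},t_{0}+\tau_{0}]$ with $t_{0}+\tau_{0}>T^{\star}$. By the uniqueness part of Theorem \ref{local_existence} it coincides with $\rho$ on the overlap $[t_{0},T^{\star})$, so it genuinely extends $\rho$ to the strictly larger interval $[0,t_{0}+\tau_{0}]$, on which $\rho$ remains in $C(0,\,\cdot\,;X^{k}(\Omega))$. This contradicts the definition of $T^{\star}$ as the first time the solution leaves this class, and the contradiction forces $M=\infty$. The crux of the argument — rather than the Gronwall-type inequality (\ref{criteria}) itself, which is already in hand — is securing the uniform-in-$t_{0}$ lower bound $\tau_{0}$ on the restart time; this is precisely where the fact that the local existence time in Theorem \ref{local_existence} is controlled by the lower-order $H^{3}(\Omega)$ norm (kept bounded by $K$ up to $T^{\star}$) becomes indispensable.
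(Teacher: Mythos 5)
Your proof is correct and follows essentially the same route as the paper, whose entire argument is the remark that the result ``follows from estimate (\ref{criteria})'': you simply make explicit the standard continuation reasoning implicit there, namely that finiteness of the integral keeps $\|\rho\|_{H^k(\Omega)}$ (hence $\|\rho\|_{H^3(\Omega)}$) bounded via (\ref{criteria}), so Theorem \ref{local_existence}, whose existence time depends only on the $H^3(\Omega)$ norm, restarts the solution past $T^{\star}$ and contradicts maximality. The filled-in details (uniform restart time $\tau_0$, gluing by uniqueness, propagation of the $X^k(\Omega)$ boundary conditions) are all sound.
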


\begin{proof}
This result follows from estimate \eqref{criteria}.
\end{proof}

\section{Global regularity for small initial data}\label{Sec_5}
This section is devoted to prove the main result of this paper:

\begin{thm}\label{main_thm}
Let $\Theta(y):=-y$. There exists $\varepsilon_0>0$ and a parameter $\gamma\in\N$ with $\gamma>4$ such that if we solve (\ref{System_A}) with initial data $\varrho(0)=\Theta+\rho(0)$ and $\rho(0)\in X^{\kappa}(\Omega)$ with $||\rho||_{H^{\kappa}(\Omega)}(0)< \varepsilon\leq \varepsilon_0$ where $\kappa\geq 5+\gamma$ then, the solution exists globally in time and satisfies the following:
\begin{enumerate}
	\item $||\bar{\varrho}||_{H^{3}(\Omega)}(t)\equiv ||\bar{\rho}||_{H^{3}(\Omega)}(t)\lesssim \frac{\varepsilon}{(1+t)^{\frac{\gamma}{4}}}$
	
	\item $||\tilde{\varrho}-\Theta||_{H^{\kappa}(\Omega)}(t)\equiv ||\tilde{\rho}||_{H^{\kappa}(\Omega)}(t)\leq 2\,\varepsilon$
\end{enumerate}
where $\varrho:=\bar{\varrho}+\tilde{\varrho}$ such that $\bar{\varrho}\perp\tilde{\varrho}$ and $\bar{\varrho}$ is given by the projection operator onto the subspace of functions with zero average in the horizontal variable.
\end{thm}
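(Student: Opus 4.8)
The plan is to run a bootstrap (continuity) argument on the maximal interval of existence $[0,T^\star)$ furnished by Theorem \ref{local_existence} and its blow-up criterion. On an interval $[0,T]\subset[0,T^\star)$ I would posit the bootstrap hypotheses
\begin{align*}
\|\bar\rho\|_{H^{3+2\gamma}(\Omega)}(t)&\le 2C_0\,\frac{\varepsilon}{(1+t)^{\tilde\gamma-(1/4)^+}},\qquad \|\bar\rho\|_{H^{\kappa}(\Omega)}(t)\le 2\varepsilon,\\
\|\tilde\rho\|_{H^{\kappa}(\Omega)}(t)&\le 3\varepsilon,
\end{align*}
and seek to recover them with the constants $2C_0,2,3$ improved to $C_0,\tfrac32,2$; standard continuity then forces $T^\star=\infty$ and yields the two stated bounds. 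Throughout I would freely use that, by Lemma \ref{Solve_Poisson}, the velocity is controlled by the zero-mean density, $\|\mathbf{u}\|_{H^s(\Omega)}\lesssim\|\bar\rho\|_{H^s(\Omega)}$, and that by Corollary \ref{thm_norma_expansion} every Sobolev norm is diagonalised in the eigenbasis.

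\textbf{Step 1: the high norm and the average.} Working with the form \eqref{System_B}, I would first propagate $\|\bar\rho\|_{H^\kappa}$ and $\|\tilde\rho\|_{H^\kappa}$. For the high norm, the energy estimate behind \eqref{estimate_Hk}--\eqref{criteria} gives $\partial_t\|\rho\|_{H^\kappa}\lesssim(\|\nabla\mathbf{u}\|_{L^\infty}+\|\nabla\rho\|_{L^\infty})\|\rho\|_{H^\kappa}$; since $\|\nabla\mathbf{u}\|_{L^\infty}\lesssim\|\bar\rho\|_{H^{3}}$ decays like $(1+t)^{-(\tilde\gamma-1/4)}$ and $\tilde\gamma\ge2$, the integral $\int_0^\infty(\cdots)\,ds$ is finite and of size $O(\varepsilon)$, so $\|\bar\rho\|_{H^\kappa}$ stays below $\tfrac32\varepsilon$. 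For the average, the second equation of \eqref{System_B} reads $\partial_t\tilde\rho=-\widetilde{\mathbf{u}\cdot\nabla\bar\rho}$, whose right-hand side is purely quadratic in $\bar\rho$; estimating $\|\widetilde{\mathbf{u}\cdot\nabla\bar\rho}\|_{H^\kappa}\lesssim\|\bar\rho\|_{H^\kappa}\|\bar\rho\|_{H^{3+2\gamma}}$ and integrating against the decay of $\|\bar\rho\|_{H^{3+2\gamma}}$ gives $\|\tilde\rho\|_{H^\kappa}(t)\le\|\tilde\rho\|_{H^\kappa}(0)+C\varepsilon^2\le2\varepsilon$.

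\textbf{Step 2: the decay of $\bar\rho$.} The crux is the first bound. Linearising \eqref{System_B} one isolates the operator $\mathcal{L}$ defined by $u_2=\mathcal{L}\bar\rho$, which by \eqref{expresion_psi} is the Fourier multiplier with symbol $-p^2/(p^2+(q\tfrac{\pi}{2})^2)$, strictly negative on the range of $\bar\rho$ since there $|p|\ge1$. Writing \eqref{System_B} in Duhamel form,
\begin{equation*}
\bar\rho(t)=e^{t\mathcal{L}}\bar\rho(0)+\int_0^t e^{(t-s)\mathcal{L}}\,\mathbf{N}(s)\,ds,\qquad \mathbf{N}:=-\overline{\mathbf{u}\cdot\nabla\bar\rho}-\partial_y\tilde\rho\,u_2,
\end{equation*}
I would exploit the damping estimate following from $p^2/(p^2+(q\tfrac{\pi}{2})^2)\gtrsim(1+q)^{-2}$ together with $e^{-x}\le C_m x^{-m}$, namely $\|e^{t\mathcal{L}}f\|_{H^{3+2\gamma}(\Omega)}\lesssim t^{-m/2}\,\|f\|_{H^{3+2\gamma+m}(\Omega)}$ for $m\ge0$, valid because the weight $(1+q)^{2m}$ is absorbed by $m$ extra derivatives in Corollary \ref{thm_norma_expansion}. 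For the free evolution, taking $m=4\tilde\gamma$ (so that $3+2\gamma+m=\kappa$) already produces decay $t^{-2\tilde\gamma}$, comfortably faster than required, so the whole difficulty concentrates on the Duhamel term.

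\textbf{Step 3: closing the bootstrap and the main obstacle.} To estimate the forcing I would split $\mathbf{N}=\mathbf{N}_{\mathrm{q}}+\mathbf{N}_{\mathrm{c}}$ into the genuinely quadratic part $\mathbf{N}_{\mathrm{q}}=-\overline{\mathbf{u}\cdot\nabla\bar\rho}$ and the \emph{coupling} part $\mathbf{N}_{\mathrm{c}}=-\partial_y\tilde\rho\,u_2$. The quadratic part carries two decaying factors and is harmless. The term $\mathbf{N}_{\mathrm{c}}$ is the real enemy: since $\tilde\rho$ is only bounded and does \emph{not} decay, $\mathbf{N}_{\mathrm{c}}$ decays merely like a single power $(1+s)^{-(\tilde\gamma-1/4)}$, and it behaves as a term linear in $\bar\rho$ with a bounded coefficient of size $\varepsilon$ — a small but non-perturbative modification of the damping. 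Here I would use smallness of $\varepsilon$ to absorb $\mathbf{N}_{\mathrm{c}}$, splitting $\int_0^t=\int_0^{t/2}+\int_{t/2}^t$: on $[0,t/2]$ the factor $(t-s)^{-m/2}$ is integrable with $m$ large, paid for by high norms of $\bar\rho$ controlled through the interpolation $\|\bar\rho\|_{H^{3+2\gamma+j}}\lesssim\|\bar\rho\|_{H^{3+2\gamma}}^{1-\frac{j}{4\tilde\gamma}}\|\bar\rho\|_{H^\kappa}^{\frac{j}{4\tilde\gamma}}$; on $[t/2,t]$ one uses that $\mathbf{N}_{\mathrm{c}}(s)$ has already decayed since $s\sim t$, taking only $m<2$ so that $(t-s)^{-m/2}$ stays integrable at the endpoint. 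Optimising this trade-off between semigroup decay and the derivative loss in $\mathbf{N}_{\mathrm{c}}$ is exactly what forces the borderline exponent $\tilde\gamma-(1/4)^+$: the endpoint regularity $m\uparrow2$ and the summation in $q$ each cost an arbitrarily small positive power, which is the origin of the $(1/4)^+$ loss. I expect this balancing — extracting enough decay from $e^{(t-s)\mathcal{L}}$ while keeping the regularity demand on $\bar\rho$ below $\kappa$ — to be the main technical obstacle, with the smallness of $\varepsilon$ providing the margin that lets the recovered constant $C_0$ beat the assumed $2C_0$ and close the loop.
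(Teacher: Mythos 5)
You have the right skeleton (local existence plus blow-up criterion, a bootstrap, semigroup decay via the eigenbasis, Duhamel), but two of your steps fail as written. First, Step 1 cannot propagate the top norm: the Gr\"onwall coefficient you take from \eqref{criteria} contains $\|\nabla\rho\|_{L^{\infty}}$, and $\nabla\rho$ contains $\partial_y\tilde{\rho}$, which by your own Step 3 (and the paper) never decays — it merely stays of size $\varepsilon$. Hence $\int_0^T\left(\|\nabla\mathbf{u}\|_{L^{\infty}}+\|\nabla\rho\|_{L^{\infty}}\right)ds\gtrsim \varepsilon\,T$, the exponential is $e^{C\varepsilon T}$, and the bootstrap cannot close globally. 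This is exactly the difficulty the paper's refined energy estimate (Theorem \ref{Energy_Estimate}) is built to remove: using incompressibility, the stream formulation and the $X^{k}$ boundary conditions, every dangerous transport term is either rewritten so that the only coefficient multiplying $\|\rho\|^2_{H^{\kappa}}$ is $\|\partial u_2\|_{L^{\infty}}$ (which does decay integrably) or absorbed into the sign-definite damping term $-\left(1-C\|\rho\|_{H^{\kappa}}\right)\|\mathbf{u}\|^2_{H^{\kappa}}$. Your separate treatment of $\tilde{\rho}$ has a related loss-of-derivative problem: estimating $\widetilde{\mathbf{u}\cdot\nabla\bar{\rho}}$ directly in $H^{\kappa}$ costs $\|\bar{\rho}\|_{H^{\kappa+1}}$, which you do not control; the paper's unified Gr\"onwall on the full $\|\rho\|_{H^{\kappa}}$ (statement (2) then follows with constant $2\varepsilon$) avoids this through the commutator/energy structure rather than a direct product estimate.

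Second, your bootstrap hypotheses omit the decay of $u_2$, and this omission is fatal to Step 3 at the stated parameters. In the paper, $u_2=\partial_x\psi$ carries the extra multiplier $p^2/\left(p^2+(q\tfrac{\pi}{2})^2\right)$ and therefore decays one full power faster than $\bar{\rho}$ ($(1+t)^{-(\tilde{\gamma}+(3/4)^{-})}$ versus $(1+t)^{-(\tilde{\gamma}-(1/4)^{+})}$), and this gain is injected into the nonlinearity through Lemma \ref{u1+u2_H^k}, $\|\mathbf{u}\|^2_{H^{n}}\leq\|\bar{\rho}\|_{H^{n}}\|u_2\|_{H^{n}}$. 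When the forcing is measured at the regularity $3+2(\gamma+\tilde{\gamma})$ that the semigroup demands and interpolated against $H^{\kappa}$ (as in Lemma \ref{decay_u2_rho}), it decays like $(1+s)^{-(\tilde{\gamma}/2+(1/8)^{-})}$ \emph{with} the $u_2$ gain — integrable precisely when $\tilde{\gamma}>7/4$, whence $\tilde{\gamma}\geq 2$ — but only like $(1+s)^{-\tilde{\gamma}/2+1/8}$ without it, which is not integrable at $\tilde{\gamma}=2$; correspondingly your $[t/2,t]$ piece, where interpolation degrades the decay as $m\uparrow 2$, falls short of the target exponent by roughly $t^{1/2}$. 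The paper closes instead by running the secondary bootstrap simultaneously on the pair $\left(\|\bar{\rho}\|_{H^{3+2\gamma}},\|u_2\|_{H^{3+2\gamma}}\right)$ with Gagliardo--Nirenberg and the convolution Lemma \ref{Basic_Lemma}, with no $t/2$-splitting needed. Two smaller points: your $H^{m}$-loss semigroup bound is correct (it is $L^2$-based via Plancherel, which is in fact why the $(1/4)^{+}$ would \emph{not} arise in your framework — in the paper it comes from the $\zeta(1^{+})$ summation in Lemma \ref{decay_j} after using $W^{2\alpha,1}$ data norms, not from any $m\uparrow 2$ endpoint as you assert); but taking $m$ up to $4\tilde{\gamma}$ requires $\|\mathbf{N}(s)\|_{H^{\kappa}}$, which again costs $\kappa+1$ derivatives of $\bar{\rho}$ — a second unaddressed derivative loss.
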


In the next three sections we give the proof of this result.

\subsection{Energy methods for the confined IPM equation}\label{Sec_5.1}

From what we have seen, we know that for $\rho(0)\in X^k(\Omega)$ there exits $T>0$ such that $\rho(t)\in X^k(\Omega)$ is a solution of (\ref{System_A_PI}) for all $t\in[0,T)$. Moreover, if $T^{\star}$ is the first time such that $\rho(t)$ is not contained in this class, then
$$\int_{0}^{T^{\star}}\left(||\nabla\mathbf{u}||_{L^{\infty}(\Omega)}(s)+||\nabla\rho||_{L^{\infty}(\Omega)}(s)\right)\, ds=\infty.$$

\subsubsection{A priori energy estimate}In  what follows,  we  assume that $\rho(t)\in X^k(\Omega)$ is a solution of (\ref{System_A_PI}) for any $t\geq 0$. Then, the following estimate holds for $k\geq 6$:
$$\tfrac{1}{2}\partial_t ||\rho||_{H^{k}(\Omega)}^2(t) \lesssim  ||\partial u_2||_{L^{\infty}(\Omega)}(t)\, ||\rho||_{H^{k}(\Omega)}^2(t)- \left(1-||\rho||_{H^{k}(\Omega)}(t)\right)\, ||\mathbf{u}||_{H^{k}(\Omega)}^2(t).$$

\noindent
In this section we will perform the basic energy estimate for
\begin{equation}\label{Evolucion_rho}
\partial_t\rho +\mathbf{u}\cdot\nabla \rho = u_2.
\end{equation}
\textbf{$L^2(\Omega)$-estimate:} We begin with the $L^2(\Omega)$ bound. We multiply (\ref{Evolucion_rho}) by $\rho$ and integrate over $\Omega$. Then,
$$\tfrac{1}{2}\partial_t ||\rho||_{L^2(\Omega)}^2=\int_{\Omega}\rho \, \partial_t\rho\,dxdy=\int_{\Omega} \rho\, u_2\,dxdy-\int_{\Omega}\rho\, \left(\mathbf{u}\cdot\nabla\right)\rho\,dxdy.$$
By the incompressibility of the velocity and the boundary conditions, we have that the second term vanish.
So, by (\ref{Stream_1}) we get:
$$\tfrac{1}{2}\partial_t ||\rho||_{L^2(\Omega)}^2= \int_{\Omega} \rho\, u_2\,dxdy= \int_{\Omega} \rho\, \partial_x\psi\,dxdy.$$
Finally, applying integration by parts and using that $\psi$ solves (\ref{Poisson_Homogeneous_Neumann_psi}) we achieve:
$$\tfrac{1}{2}\partial_t ||\rho||_{L^2(\Omega)}^2= \int_{\Omega} \Delta\psi \, \psi\,dxdy=-\int_{\Omega}\left(\nabla \psi \right)^2\, dx dy+ \int_{\Omega}\partial_y[\partial_y\psi \, \psi]\, dxdy.$$
As $\psi|_{\partial\Omega}=0$, it is clear that the boundary term vanish, and consequently, we have that:
\begin{equation}
\tfrac{1}{2}\partial_t ||\rho||_{L^2(\Omega)}^2=- \,||\nabla \psi ||_{L^2(\Omega)}^2.
\end{equation}

\noindent
\textbf{$\dot{H}^{k}(\Omega)$-estimate:} We next take $\partial^{k}$ to (\ref{Evolucion_rho}), we multiply it by $\partial^{k}\rho$ and integrate over $\Omega$. Then,
\begin{align*}
\tfrac{1}{2}\partial_t ||\rho||_{\dot{H}^{k}(\Omega)}^2=\int_{\Omega}\partial^{k}\rho\,\partial_t\partial^{k}\rho\,dxdy&= \int_{\Omega}\partial^{k}\rho\,\partial^{k} u_2dxdy-\int_{\Omega}\partial^{k}\rho\,\partial^{k}(\mathbf{u}\cdot\nabla)\rho\, dxdy\\
&=I_1+I_2.
\end{align*}
First of all, we study $I_1$. By (\ref{Stream_1}), (\ref{Poisson_Homogeneous_Neumann_psi}) and integration by parts, we get:
\begin{align*}
I_1&=\int_{\Omega}\partial^{k}\rho\,\partial^{k}\partial_x \psi dxdy=-\int_{\Omega}\partial^{k}\partial_x\rho\,\partial^{k} \psi dxdy=\int_{\Omega}\Delta\partial^{k}\psi\,\partial^{k} \psi dxdy\\
&=-\int_{\Omega}\left(\nabla\partial^{k} \psi\right)^2 dxdy+\int_{\Omega}\partial_y\left[\partial_y\partial^{k}\psi \, \partial^{k}\psi\right] \, dxdy.
\end{align*}
As $\psi\in X^{k+1}(\Omega)$ due to Lemma (\ref{Solve_Poisson}), the boundary term vanish and we have proved that:
\begin{equation}
I_1=- \,||\nabla \psi ||_{\dot{H}^{k}(\Omega)}^2.
\end{equation}

\noindent
Secondly, we study $I_2$. The most singular term vanish by the incompressibility and the boundary conditions:
\begin{align*}
I_2&=-\int_{\Omega}\partial^{k}\rho\,\partial^{k}(\mathbf{u}\cdot\nabla)\rho\, dxdy\\
&=-\int_{\Omega}\partial^{k} \rho \, \left(\partial\mathbf{u}\cdot\nabla\partial^{k-1}\rho\right) \, dxdy -\sum_{i=1}^{k-1}{k \choose i}\int_{\Omega} \partial^{k}\rho \, \left(\partial^{i+1}\mathbf{u}\cdot\nabla\partial^{k-i-1}\rho\right) \, dx dy.
\end{align*}
Now, we want to distinguish between two kinds of terms, first the case where $i=0$ and then the case where $1\leq i\leq k-1$. The term for $i=0$ is bounded directly as:
\begin{equation*}
-\int_{\Omega}\partial^{k} \rho \, \left(\partial\mathbf{u}\cdot\nabla\partial^{k-1}\rho\right) \, dxdy\leq ||\partial \mathbf{u}||_{L^{\infty}(\Omega)}\, ||\rho||_{H^{k}(\Omega)}^2
\end{equation*}
but working a little bit harder, we achieve:
\begin{align*}
-\int_{\Omega}\partial^{k} \rho \, \left(\partial\mathbf{u}\cdot\nabla\partial^{k-1}\rho\right) \, dxdy&=-\int_{\Omega}\partial^{k} \rho \, \left(\partial u_1\,\partial_x\partial^{k-1}\rho+\partial u_2\,\partial_y\partial^{k-1}\rho\right) \, dxdy\\
&\leq \int_{\Omega}\partial^{k} \rho \, \left(\partial u_1\,\partial_x\partial^{k-1}\rho\right) \, dxdy+||\partial u_2||_{L^{\infty}(\Omega)}\, ||\rho||_{H^{k}(\Omega)}^2
\end{align*}
where, for the first integral, we consider two cases:
\begin{itemize}
	\item \Ovalbox{$\partial u_1 \equiv \partial_x u_1$} By the incompressibility of the flow it is clear that:
	$$\int_{\Omega}\partial^{k} \rho \, \left(\partial_x u_1\,\partial_x\partial^{k-1}\rho\right) \, dxdy=-\int_{\Omega}\partial^{k} \rho \, \left(\partial_y u_2\,\partial_x\partial^{k-1}\rho\right) \, dxdy\leq ||\partial u_2||_{L^{\infty}(\Omega)}\, ||\rho||_{H^{k}(\Omega)}^2.$$
	\item \Ovalbox{$\partial u_1 \equiv \partial_y u_1$} In this case, by (\ref{Stream_1}) we have that:
\begin{align*}
\int_{\Omega}\partial^{k} \rho \, \left(\partial_y u_1\,\partial_x\partial^{k-1}\rho\right) \, dxdy&=\int_{\Omega}\partial^{k} \rho \, \left(\partial_x u_2\,\partial_x\partial^{k-1}\rho\right) \, dxdy \, +\int_{\Omega}\partial^{k} \rho \, \left(\partial_x \rho\,\partial_x\partial^{k-1}\rho\right) \, dxdy\\
	&\leq ||\partial u_2||_{L^{\infty}(\Omega)}\, ||\rho||_{H^{k}(\Omega)}^2+||\rho||_{H^{k}(\Omega)}\,||\nabla\psi||_{H^{k}}^2.
\end{align*}
\end{itemize}
To sum up, we have proved that:
\begin{equation}\label{SPECIAL_TERM}
-\int_{\Omega}\partial^{k} \rho \, \left(\partial\mathbf{u}\cdot\nabla\partial^{k-1}\rho\right) \, dxdy\leq ||\partial u_2||_{L^{\infty}(\Omega)}\, ||\rho||_{H^{k}(\Omega)}^2+||\rho||_{H^{k}(\Omega)}\,||\nabla\psi||_{H^{k}}^2.
\end{equation}
\noindent
Indeed, this is the only term that cannot be absorbed by the linear part. This term is the reason why we need to have a integrable time decay  of $||\partial u_2||_{L^{\infty}(\Omega)}$. Precisely, the main goal of the next Section \ref{Sec_5.2} is to obtain a time decay rate for it. \\

By the other hand, for $i=1,\ldots,k-1$ we separate the other term as follows:
\begin{align*}
\int_{\Omega} \partial^{k}\rho \, \left(\partial^{i+1}\mathbf{u}\cdot\nabla\partial^{k-i-1}\rho\right) \, dx dy&=\int_{\Omega} \partial^{k}\rho \, \partial^{i+1}u_1\,\partial_x\partial^{k-i-1}\rho \, dx dy+\int_{\Omega} \partial^{k}\rho \, \partial^{i+1}u_2\,\partial_y\partial^{k-i-1}\rho \, dx dy\\
&=J_1(i)+J_2(i) \qquad i=1,\ldots,k-1.
\end{align*}

\noindent
In view of (\ref{Stream_1}) and (\ref{Poisson_Homogeneous_Neumann_psi}), we have that $J_1(i)$ can be rewritten as:
\begin{align*}
J_1(i)&=\int_{\Omega} \partial^{k}\rho \, \partial^{i+1}\partial_y\psi \,\partial^{k-i-1}\Delta\psi \, dx dy
\end{align*}
and we clearly have
\begin{align}\label{J1}
\sum_{i=1}^{k-1}J_1(i)&\leq  ||\partial^{k}\rho||_{L^2}\left[ \sum_{i=1}^{k-3} ||\partial^{i+1}\partial_y\psi||_{L^{\infty}} ||\partial^{k-i-1}\Delta\psi||_{L^2}+\sum_{i=k-2}^{k-1} ||\partial^{i+1}\partial_y\psi||_{L^{2}} ||\partial^{k-i-1}\Delta\psi||_{L^{\infty}} \right] \nonumber \\
&\leq ||\rho||_{H^{k}(\Omega)}\, ||\nabla\psi||_{H^{k}(\Omega)}^2 \qquad \text{for}\qquad k\geq 4.
\end{align}

\noindent
For $J_2(i)$, by (\ref{Stream_1}) we obtain that:
\begin{align*}
J_2(i)&=\int_{\Omega} \partial^{k}\rho \, \partial^{i+1}\partial_x\psi \,\partial_y\partial^{k-i-1}\rho \, dx dy
\end{align*}

\noindent
and for $i=1,\ldots, k-1$ we need to distinguish two situations:\\

$\hspace{-1.1 cm}\bullet$ \quad  We have at least one derivative in $x$. This is $\partial^{k}\equiv \partial^{k-1}\partial_x$. Then, by (\ref{Poisson_Homogeneous_Neumann_psi}) we can write $J_2(i)$ as follows:
\begin{align*}
J_2(i)&=-\int_{\Omega} \partial^{k-1}\Delta\psi \, \partial^{i+1}\partial_x\psi \,\partial_y\partial^{k-i-1}\rho \, dx dy
\end{align*}
and as before, we clearly have
\begin{align}\label{J2_A}
\sum_{i=1}^{k-1}J_2(i)&\leq  ||\partial^{k-1}\Delta\psi||_{L^2}\left[ \sum_{i=1}^{k-3} ||\partial^{i+1}\partial_x\psi||_{L^{\infty}} ||\partial_y\partial^{k-i-1}\rho||_{L^2}+\sum_{i=k-2}^{k-1} ||\partial^{i+1}\partial_x\psi||_{L^{2}} ||\partial_y\partial^{k-i-1}\rho||_{L^{\infty}} \right] \nonumber \\
&\leq ||\rho||_{H^{k}(\Omega)}\, ||\nabla\psi||_{H^{k}(\Omega)}^2 \qquad \text{for}\qquad k\geq 4.
\end{align}
	
$\hspace{-1.1 cm} \bullet$ \quad  All derivatives are in $y$. This is $\partial^{k}\equiv \partial_y^{k}$. In this case, we have that:
\begin{align*}
J_2(i)=\int_{\Omega} \partial_y^{k}\rho \, \partial_y^{i+1}\partial_x\psi \,\partial_y^{k-i}\rho \, dx dy
\end{align*}
and by integration by parts we achieve:
\begin{align*}
\hspace{-0.8 cm} J_2(i)&=\int_{\Omega}\partial_y^{k-1}\partial_x\rho \, \partial_y^{k-i+1}\rho\, \partial_y^{i+1}\psi\, dxdy+\int_{\Omega}\partial_y^{k-1}\partial_x\rho \, \partial_y^{k-i}\rho\, \partial_y^{i+2}\psi\, dxdy-\int_{\Omega}\partial_y^{k}\rho\, \partial_y^{k-i}\partial_x\rho \, \partial_y^{i+1}\psi \, dxdy\\
&-\int_{\Omega}\partial_y\left[\partial_y^{k-1}\partial_x\rho \, \partial_y^{k-i}\rho\, \partial_y^{i+1}\psi\right]\, dxdy +\int_{\Omega}\partial_x\left[\partial_y^{k} \rho\, \partial_y^{i+1} \psi \, \partial_y^{k-i} \rho \right] \, dxdy.
\end{align*}
By the periodicity in the horizontal varible, it is clear that the only boundary term that needs to be study  carefully is the first one, which vanish bucause $\rho\in X^k(\Omega)$ and $\psi\in X^{k+1}(\Omega)$.

Therefore, we get:
\begin{align*}
\hspace{-0.8 cm} J_2(i)&=\int_{\Omega}\partial_y^{k-1}\partial_x\rho \, \partial_y^{k-i+1}\rho\, \partial_y^{i+1}\psi\, dxdy+\int_{\Omega}\partial_y^{k-1}\partial_x\rho \, \partial_y^{k-i}\rho\, \partial_y^{i+2}\psi\, dxdy-\int_{\Omega}\partial_y^{k}\rho\, \partial_y^{k-i}\partial_x\rho \, \partial_y^{i+1}\psi \, dxdy\\
&=-\int_{\Omega}\partial_y^{k-1}\Delta\psi \, \partial_y^{k-i+1}\rho\, \partial_y^{i+1}\psi\, dxdy-\int_{\Omega}\partial_y^{k-1}\Delta\psi \, \partial_y^{k-i}\rho\, \partial_y^{i+2}\psi\, dxdy+\int_{\Omega}\partial_y^{k}\rho\, \partial_y^{k-i}\Delta\psi \, \partial_y^{i+1}\psi \, dxdy
\end{align*}
where in the last equality we have used (\ref{Poisson_Homogeneous_Neumann_psi}). Repeatedly applying H\"older's inequality we obtain that:
\begin{align*}
\sum_{i=1}^{k-1}J_2(i)&\leq ||\partial_y^{k-1}\Delta\psi||_{L^2}\left[\sum_{i=1}^{k-2} ||\partial_y^{k-i+1}\rho||_{L^2}\,||\partial_y^{i+1}\psi||_{L^{\infty}}+ ||\partial_y^{2}\rho||_{L^{\infty}}\,||\partial_y^{k}\psi||_{L^2}\right]\\
&+||\partial_y^{k-1}\Delta\psi||_{L^2}\left[\sum_{i=1}^{k-3} ||\partial_y^{k-i}\rho||_{L^2}\,||\partial_y^{i+2}\psi||_{L^{\infty}}+ \sum_{i=k-2}^{k-1} ||\partial_y^{k-i}\rho||_{L^{\infty}}\,||\partial_y^{i+2}\psi||_{L^{2}}\right]\\
&+||\partial_y^{k}\rho||_{L^2}\left[\sum_{i=1}^{k-2} ||\partial_y^{k-i}\Delta\psi||_{L^2}\, ||\partial_y^{i+1}\psi||_{L^{\infty}} + ||\partial_y\Delta\psi||_{L^{\infty}}\, ||\partial_y^{k}\psi||_{L^{2}}\right].
\end{align*}
Then, by the Sobolev embedding, we clearly have:
\begin{equation}\label{J2_B}
\sum_{i=1}^{k-1}J_2(i)\leq ||\rho||_{H^{k}(\Omega)}\, ||\nabla\psi||_{H^{k}(\Omega)}^2 \qquad \text{for}\qquad k\geq 6.
\end{equation}
Putting together (\ref{SPECIAL_TERM}), (\ref{J1}), (\ref{J2_A}) and (\ref{J2_B}) we have proved that:
\begin{equation}\label{I2}
I_2\lesssim ||\partial u_2||_{L^{\infty}(\Omega)}\, ||\rho||_{H^{k}(\Omega)}^2+||\rho||_{H^{k}(\Omega)}\, ||\nabla\psi||_{H^{k}(\Omega)}^2 \qquad \text{for}\qquad k\geq 6.
\end{equation}

\noindent
To sum up, we have obtained the next energy estimate.
\begin{thm}\label{Energy_Estimate}
Let $\rho(t)\in X^k(\Omega)$ be a solution of (\ref{System_A_PI}) for any $t\geq 0$. Then, the following estimate holds for \nolinebreak$k\geq 6$:
\begin{equation}\label{energy_estimate}
\tfrac{1}{2}\partial_t ||\rho||_{H^{k}(\Omega)}^2(t) \leq  C\,||\partial u_2||_{L^{\infty}(\Omega)}(t)\, ||\rho||_{H^{k}(\Omega)}^2(t)- \left(1-C\,||\rho||_{H^{k}(\Omega)}(t)\right)\, ||\mathbf{u}||_{H^{k}(\Omega)}^2(t).
\end{equation}
\end{thm}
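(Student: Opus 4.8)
The plan is to differentiate the full norm $\|\rho\|_{H^k}^2 = \|\rho\|_{L^2}^2 + \|\rho\|_{\dot H^k}^2$ in time using the evolution equation (\ref{Evolucion_rho}), and to exploit the two distinct mechanisms hidden in its right-hand side: the linear forcing $u_2 = \partial_x\psi$, which after integration by parts against the Poisson structure (\ref{Poisson_Homogeneous_Neumann_psi}) produces the \emph{good} damping term $-\|\mathbf{u}\|_{H^k}^2$, and the transport nonlinearity $\mathbf{u}\cdot\nabla\rho$, which I would estimate so that all but one contribution is absorbable into that damping. First I would treat the $L^2$ level by testing (\ref{Evolucion_rho}) against $\rho$: incompressibility together with $u_2|_{\partial\Omega}=0$ kills the transport term, while $\int_\Omega \rho\,\partial_x\psi = \int_\Omega \Delta\psi\,\psi = -\|\nabla\psi\|_{L^2}^2$, where $\psi|_{\partial\Omega}=0$ disposes of the boundary term.

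For the top order I would apply $\partial^k$ to (\ref{Evolucion_rho}), test against $\partial^k\rho$, and split the result into a forcing contribution $I_1$ and a transport contribution $I_2$. The term $I_1=\int_\Omega \partial^k\rho\,\partial^k\partial_x\psi$ reproduces the $L^2$ computation: integrating by parts in $x$, substituting $\Delta\psi=-\partial_x\bar\rho$, and integrating by parts in $y$ yields $I_1=-\|\nabla\psi\|_{\dot H^k}^2$, the boundary term $\int_\Omega \partial_y[\partial_y\partial^k\psi\,\partial^k\psi]$ vanishing precisely because $\psi\in X^{k+1}(\Omega)$ by Lemma \ref{Solve_Poisson}. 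This is the source of the coefficient $1$ in front of the damping, and since $\mathbf{u}=\nabla^\perp\psi$ gives $\|\nabla\psi\|_{H^k}=\|\mathbf{u}\|_{H^k}$ pointwise, it is exactly the $-\|\mathbf{u}\|_{H^k}^2$ of the statement.

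The heart of the matter is $I_2=-\int_\Omega \partial^k\rho\,\partial^k(\mathbf{u}\cdot\nabla)\rho$. Expanding by Leibniz, the most singular piece $-\int_\Omega \partial^k\rho\,(\mathbf{u}\cdot\nabla\partial^k\rho)$ vanishes by incompressibility and the boundary conditions. The remaining pieces split according to whether exactly one derivative lands on $\mathbf{u}$ (the $i=0$ term) or between two and $k-1$ do. For the intermediate terms $J_1(i),J_2(i)$ I would distribute derivatives so that one factor sits in $L^\infty$ and is controlled by Sobolev embedding, rewriting $\partial_y\psi$, $\partial_x\psi$ in terms of $\mathbf{u}$ and $\partial^{k-i-1}\Delta\psi=-\partial^{k-i-1}\partial_x\bar\rho$; each then contributes $\lesssim \|\rho\|_{H^k}\|\nabla\psi\|_{H^k}^2$, valid for $k\geq 4$. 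The genuinely obstructive piece is the $i=0$ term carrying $\partial_y u_1$: using $\partial_y u_1=\partial_x u_2+\partial_x\rho$ (from $\mathbf{u}=\nabla^\perp\psi$ and $\Delta\psi=-\partial_x\bar\rho$) isolates one piece bounded by $\|\partial u_2\|_{L^\infty}\|\rho\|_{H^k}^2$, which cannot be absorbed by the damping and forces the time-decay analysis of Section \ref{Sec_5.2}, plus a remainder bounded by $\|\rho\|_{H^k}\|\nabla\psi\|_{H^k}^2$.

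I expect the main obstacle to be the case inside $J_2(i)$ where all $k$ derivatives fall on $y$, since there is then no free $x$-derivative to integrate by parts harmlessly. Here I would integrate by parts in $y$ to move a derivative off the top-order factor, carefully tracking the boundary term $\int_\Omega \partial_y[\partial_y^{k-1}\partial_x\rho\,\partial_y^{k-i}\rho\,\partial_y^{i+1}\psi]$ and invoking $\rho\in X^k(\Omega)$ together with $\psi\in X^{k+1}(\Omega)$ to confirm it vanishes; the surviving integrals are then reorganized via $\partial_y^{k-1}\partial_x\rho=-\partial_y^{k-1}\Delta\psi$ and bounded, by Hölder and Sobolev embedding, by $\|\rho\|_{H^k}\|\nabla\psi\|_{H^k}^2$ for $k\geq 6$. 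Collecting $I_1$ and all the $I_2$ contributions gives $\tfrac12\partial_t\|\rho\|_{H^k}^2 \lesssim \|\partial u_2\|_{L^\infty}\|\rho\|_{H^k}^2 + \|\rho\|_{H^k}\|\mathbf{u}\|_{H^k}^2 - \|\mathbf{u}\|_{H^k}^2$, which is the asserted inequality once the last two terms are grouped as $-(1-C\|\rho\|_{H^k})\|\mathbf{u}\|_{H^k}^2$.
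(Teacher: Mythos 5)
Your proposal follows essentially the same route as the paper's proof: the same $L^2$ plus $\dot H^k$ energy splitting, the same $I_1/I_2$ decomposition with $I_1=-\|\nabla\psi\|_{\dot H^k}^2$ obtained from the Poisson structure and the vanishing boundary terms granted by $\psi\in X^{k+1}(\Omega)$, the same isolation of the $i=0$ term via $\partial_y u_1=\partial_x u_2+\partial_x\rho$ producing the non-absorbable $\|\partial u_2\|_{L^\infty}\|\rho\|_{H^k}^2$ contribution, and the same integration by parts in $y$ (with the boundary term killed by $\rho\in X^k(\Omega)$, $\psi\in X^{k+1}(\Omega)$) for the all-$y$-derivatives case of $J_2(i)$, which is exactly where the paper's requirement $k\geq 6$ arises. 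The argument is correct and matches the paper's proof in all essential steps.
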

\begin{proof}
First of all, we remember that $\mathbf{u}=\nabla^{\perp}\psi$ and:
\begin{align*}
\tfrac{1}{2}\partial_t ||\rho||_{L^2(\Omega)}^2\, &=-||\nabla\psi||_{L^2(\Omega)}^2,\\
\tfrac{1}{2}\partial_t ||\rho||_{\dot{H}^{k}(\Omega)}^2&=-||\nabla\psi||_{\dot{H}^{k}(\Omega)}^2+I_2,
\end{align*}
so summing and applying (\ref{I2}), we have achieved our target.
\end{proof}

As we want to prove a global existence in time result for small data, this is $||\rho||_{H^{k}(\Omega)}(t)<< 1$. Then, the second term in the energy estimate (\ref{energy_estimate}) is a ``good'' one, because it has the right sign. In consequence, we fix our attention in the first term. If we have a ``good'' time decay of the $L^{\infty}(\Omega)$-norm of $\partial u_2$, then we will be able to prove that $||\rho||_{H^{k}(\Omega)}(t)$ remains small for all time by a boostraping argument.

\subsection{Linear \& Non-Linear estimates}\label{Sec_5.2}
\noindent
Our goal for the rest of the paper is to obtain time decay estimates for $||\partial u_2||_{L^{\infty}(\Omega)}(t).$
As we will see in the next Section \ref{Sec_5.3}, to close the energy estimate and finish the proof is enough to get an integrable rate.\\

We approach the question of global well-posedness for a small initial data from a perturbative point of view, i.e., we see (\ref{System_B}) as a non-linear perturbation of the linear problem. The linearized system of (\ref{System_B}) around the trivial solution $(\rho,\mathbf{u})=(0,0)$ reads
\begin{align*}
\left\{
\begin{array}{rl}
\partial_t\bar{\rho}  &=  u_2  \\
\partial_t\tilde{\rho} &=0\\
\mathbf{u}&=-\nabla\Pi-(0,\bar{\rho})\\
\nabla\cdot\mathbf{u}&=0
\end{array}
\right.
\end{align*}
together with the no-slip condition $\mathbf{u}\cdot \mathbf{n}=0$ on $\partial\Omega$  and initial data $\rho(0)\in X^k(\Omega)$ such that $\rho(0)=\bar{\rho}(0)+\tilde{\rho}(0)$.

It is not difficult to prove that $\bar{\rho}$ will decay in time and $\tilde{\rho}$ will just remain bounded at linear order. Consequently, the linearized problem has a very large set of stationary (undamped) modes.

Now, we return to our non-linear problem:
\begin{equation*}
\left\{
\begin{array}{rl}
\partial_t \bar{\rho}+\overline{\textbf{u}\cdot \nabla \bar{\rho}}+\partial_y\tilde{\rho} \,u_2&=u_2\\
\partial_t \tilde{\rho}+ \widetilde{\textbf{u}\cdot \nabla \bar{\rho}}\hspace{1.4 cm }&=0\\
\textbf{u}&=-\nabla\Pi-(0,\bar{\rho})\\
\nabla\cdot\mathbf{u}&=0
\end{array}
\right.
\end{equation*}
together with the no-slip condition $\mathbf{u}\cdot \mathbf{n}=0$ on $\partial\Omega$. Since $\bar{\rho}$ is decaying, the term $\overline{\mathbf{u}\cdot\nabla\bar{\rho}}$ should be small and controllable. The term $\partial_y\tilde{\rho}\, u_2$, however, acts like a second linear operator since $\tilde{\rho}$ is not decaying. It is conceivable that this extra quasi-linear operator could compete with the damping coming from the linear term. This makes the problem of long-time behavior more difficult.

As in \cite{Elgindi} we solve this by, more or less, doing a second linearization around the undamped modes and showing that the stationary modes can be controlled. Then, we wish to prove decay estimates for $\bar{\rho}$ in the following system:
\begin{equation*}
\left\{
\begin{array}{rl}
\partial_t \bar{\rho}&=(1-\partial_y\tilde{\rho}) \,u_2\\
\partial_t \tilde{\rho}&=0\\
\textbf{u}&=-\nabla\Pi-(0,\bar{\rho})\\
\nabla\cdot\mathbf{u}&=0
\end{array}
\right.
\end{equation*}
assuming that the initial data $\rho(0)=\bar{\rho}(0)+\tilde{\rho}(0)$ is sufficiently small. By showing that, we find the decay mechanism is ``stable'' with respect to the sort of perturbations which
this second linear operator introduces, and we are able to keep the decay mechanism
and close a decay estimate for $\bar{\rho}$ and show that $\tilde{\rho}$, while not decaying, converges as $t\to\infty$.\\

We notice that the second equation $\partial_t\tilde{\rho}(t)=0$ reduces to a condition at time $t=0$, i.e. $\tilde{\rho}(y,t)=\tilde{\rho}(y,0)$. In consequence $\tilde{\rho}$ will just remain bounded and our goal is to solve the following system in $\Omega$:
\begin{align}\label{Lineal_u2}
\left\{
\begin{array}{rl}
\partial_t\bar{\rho}  &=(1-\partial_y\tilde{\rho}) \,  u_2  \\
\mathbf{u}&=-\nabla\Pi-(0,\bar{\rho})\\
\nabla\cdot\mathbf{u}&=0
\end{array}
\right.
\end{align}
besides the no-slip  condition $\mathbf{u}\cdot \mathbf{n}=0$ on $\partial\Omega$. Using the \textit{stream formulation} (\ref{Stream_1}), we can rewrite (\ref{Lineal_u2}) in a  more adequate way as:
\begin{equation}\label{Linear_Psi}
\left\{
\begin{array}{ll}
\partial_t\bar{\rho}&=(1-\partial_y\tilde{\rho}) \,\partial_x\psi\\
\bar{\rho}|_{t=0}&=\bar{\rho}(0)
\end{array}
\right.
\end{equation}
where $\psi$ is the solution of the Poisson problem (\ref{Poisson_Homogeneous_Neumann_psi}) and $\rho(0)\in X^k(\Omega).$

\subsubsection{Quasi-Linear Decay}
In this subsection we prove $L^2(\Omega)$ decay estimates for the quasi-linear equation:
\begin{equation}\label{new_lineal}
\partial_t\bar{\rho}=(1-G(y,t)) \,\partial_x\psi
\end{equation}
where $\psi$ is the  solution of (\ref{Poisson_Homogeneous_Neumann_psi}) given by (\ref{expresion_psi}) and $G(y,t)$, which plays the role of $\partial_y\tilde{\rho}(y,t)$, is sufficiently small.\\

\noindent
\textbf{Remark:} We cannot extract  an exact formula for the solution by taking the analog of Fourier transform given by the eigenfunction expansion, because the $G(y,t)$ term mixes the effects of all the Fourier coefficients.
\begin{lemma} There exists $\varepsilon>0$ small enough such that if $||G||_{H^2([-1,1])}(t)\leq \varepsilon$ for all time, then the solution of equation (\ref{new_lineal}) satisfies that:
$$\partial_t||\bar{\rho}||_{L^2(\Omega)}^2(t)\lesssim -||\nabla \psi||_{L^2(\Omega)}^2(t)$$
where $\psi$ is the  solution of (\ref{Poisson_Homogeneous_Neumann_psi}).
\end{lemma}
\begin{proof}
Upon multiplying (\ref{new_lineal}) by $\bar{\rho}$ and integrating we see:
\begin{align*}
\tfrac{1}{2}\partial_t ||\bar{\rho}||_{L^2(\Omega)}^2=\int_{\Omega} \left(1-G(y)\right) \partial_x\psi \bar{\rho}\,dx dy.
\end{align*}
After integrating by parts and use the \textit{stream function} $\psi$, which is solution of the Poisson's problem (\ref{Poisson_Homogeneous_Neumann_psi}), we arrive at:
\begin{align*}
\tfrac{1}{2}\partial_t ||\bar{\rho}||_{L^2(\Omega)}^2&=\int_{\Omega} \left(1-G(y)\right) \,\psi \Delta\psi\,dx dy\\
&=-\int_{\Omega} \left(1-G(y)\right) \, |\nabla \psi|^2\,dxdy+\int_{\Omega}G'(y)\,\psi\,\partial_y\psi\,dxdy.
\end{align*}
Now, applying the Sobolev embedding $L^{\infty}([-1,1])\hookrightarrow H^1([-1,1])$  and the Poincar\'e inequality we get:
$$\tfrac{1}{2}\partial_t ||\bar{\rho}||_{L^2(\Omega)}^2(t)\leq -\left[1-C\,||G||_{H^2([-1,1])}(t)\right]\,||\nabla\psi||_{L^2(\Omega)}^2(t).$$
Then, as $||G||_{H^2([-1,1])}(t)$ is small enough for all time, we get that $||\bar{\rho}||_{L^2(\Omega)}(t)$ is bounded by its initial \nolinebreak data.
\end{proof}

As in \cite{Elgindi}, due to the fact that the Laplacian has discrete spectrum on $\Omega$ we can actually deduce that $\bar{\rho}$ decays in $L^2(\Omega)$ so long as its higher derivatives are controlled.

\begin{lemma}Let $\alpha\in\N$ and $N:\R^{+}\longrightarrow \R^{+}$.
The solution of (\ref{Poisson_Homogeneous_Neumann_psi}) satisfies the following lower bound:
\begin{equation}{\label{lower_bound}}
||\nabla\psi||_{L^2(\Omega)}^2(t)\geq \frac{1}{N(t)}||\bar{\rho}||_{L^2(\Omega)}^2(t)-\frac{1}{N(t)^{1+\alpha}}||\bar{\rho}||_{H^{\alpha}(\Omega)}^2(t)
\end{equation}
\end{lemma}
\begin{proof} The solution of (\ref{Poisson_Homogeneous_Neumann_psi}) is given by:
\begin{equation*}
\psi(x,y)=\, \sum_{p\in\Z}\sum_{q\in\N}\left( \frac{ip}{p^2+ \left(q\tfrac{\pi}{2}\right)^2 }\right)\, \mathcal{F}_{\omega}[\bar{\rho}](p,q)\, \omega_{p,q}(x,y).
\end{equation*}
Moreover, as $||\nabla \psi ||_{L^2(\Omega)}^2(t)=-(\psi,\Delta\psi)=(\psi,\partial_x\bar{\rho})$, it is clear that:
\begin{align*}
||\nabla \psi ||_{L^2(\Omega)}^2(t)&=\sum_{p\in\Z}\sum_{q\in\N}\left( \frac{p^2}{p^2+ \left(q\tfrac{\pi}{2}\right)^2 }\right)^2\, \big|\mathcal{F}_{\omega}[\bar{\rho}](p,q)\big|^2.
\end{align*}
Now, on one hand, we introduce the auxiliary function $N:\R^{+}\longrightarrow \R^{+}$ to obtain that:
\begin{align}\label{part_1_lemma}
||\nabla \psi ||_{L^2(\Omega)}^2(t)&\geq \frac{1}{N(t)}||\bar{\rho}||_{L^2(\Omega)}^2(t)+\sum_{(p,q)\in\Z_{\neq 0}\times\N} \left(\frac{1}{p^2+ \left(q\tfrac{\pi}{2}\right)^2 }-\frac{1}{N(t)}\right)\, \big|\mathcal{F}_{\omega}[\bar{\rho}](p,q)\big|^2\nonumber\\
&\geq \frac{1}{N(t)}\left(||\bar{\rho}||_{L^2(\Omega)}^2(t)-\sum_{p^2+\left(q\tfrac{\pi}{2}\right)^2\geq N(t)} \big|\mathcal{F}_{\omega}[\bar{\rho}](p,q)\big|^2 \right)
\end{align}
On the other hand, by Corollary (\ref{thm_norma_expansion}) we have that:
\begin{align}\label{part_2_lemma}
\sum_{p^2+\left(q\tfrac{\pi}{2}\right)^2\geq N(t)} \big|\mathcal{F}_{\omega}[\bar{\rho}](p,q)\big|^2 &\leq \frac{1}{N(t)^{\alpha}}\sum_{p^2+\left(q\tfrac{\pi}{2}\right)^2\geq N(t)}\left(p^2+\left(q\tfrac{\pi}{2}\right)^2\right)^{\alpha} \big|\mathcal{F}_{\omega}[\bar{\rho}](p,q)\big|^2\nonumber\\
&\leq \frac{1}{N(t)^{\alpha}} ||\bar{\rho}||_{H^{\alpha}(\Omega)}^2(t)
\end{align}
Combining the preceding estimates (\ref{part_1_lemma}) and (\ref{part_2_lemma}) we arrive at (\ref{lower_bound}).
\end{proof}

This gives that:
\begin{equation*}
\partial_t||\bar{\rho}||_{L^2(\Omega)}^2(t)\lesssim -\frac{1}{N(t)}||\bar{\rho}||_{L^2(\Omega)}^2(t)+\frac{1}{N(t)^{1+\alpha}}||\bar{\rho}||_{H^{\alpha}(\Omega)}^2(t)
\end{equation*}
and assuming that $N:\R^{+}\longrightarrow \R^{+}$ satisfies that $N'(t)N(t)\geq 1$ we obtain:
\begin{equation}\label{lema_decaimiento}
||\bar{\rho}||_{L^2(\Omega)}^2(t)\lesssim e^{-(N(t)-N(0))}||\bar{\rho}||_{L^2(\Omega)}^2(0)+\int_{0}^{t}\frac{e^{-(N(t)-N(s))}}{N(s)^{1+\alpha}}\,||\bar{\rho}||_{H^{\alpha}(\Omega)}^2(s)\, ds.
\end{equation}
For simplicity, we take $N(t):=2\sqrt{1+t}$ in (\ref{lema_decaimiento}), which give us:
$$||\bar{\rho}||_{L^2(\Omega)}^2(t)\lesssim e^{-2\sqrt{1+t}}||\bar{\rho}||_{L^2(\Omega)}^2(0)+\left(\int_{0}^{t}\frac{e^{-2(\sqrt{1+t}-\sqrt{1+s})}}{(1+s)^{\tfrac{1+\alpha}{2}}}\, ds\right)||\bar{\rho}||_{L^{\infty}([0,t],H^{\alpha}(\Omega))}^2.$$
Now, we use the following calculus lemma.
\begin{lemma}Let $\alpha\in \N$, we have that:
$$\int_{0}^{t}\frac{e^{-2(\sqrt{1+t}-\sqrt{1+s})}}{(1+s)^{\tfrac{1+\alpha}{2}}}\, ds \lesssim \frac{1}{(1+t)^{\tfrac{\alpha}{2}}}$$
\end{lemma}
\begin{proof}
The proof of this lemma is simple and basically follows after we split the integral into two pieces: one from 0 to $t/2$ and the other from $t/2$ to $t$. The integral from $0$ to $t/2$ decays exponentially. The second integral decays like $(1+t)^{-\frac{1+\alpha}{2}}$ multiplied by the following factor:
$$\int_{t/2}^{t}e^{-2(\sqrt{1+t}-\sqrt{1+s})}\, ds=\int_{0}^{2\left(\sqrt{1+t}-\sqrt{1+t/2}\right)}e^{-\tau}\left(\sqrt{1+t}-\frac{\tau}{2}\right)\,d\tau\lesssim \sqrt{1+t}$$
This completes the proof.
\end{proof}
Then, applying the previous lemma we see that:
\begin{equation}\label{decay_l2_tarek}
||\bar{\rho}||_{L^2(\Omega)}^2(t)\lesssim \frac{||\bar{\rho}||_{L^{\infty}([0,t],H^{\alpha}(\Omega))}^2}{(1+t)^{\tfrac{\alpha}{2}}}.
\end{equation}
Now, we wish to prove a similar decay estimate for the higher derivatives. The idea is then to show that $||\bar{\rho}||_{H^{\alpha}(\Omega)}^2(t)$ is bounded by its initial data and then this would give (\ref{decay_l2_tarek}) with $L^2(\Omega)$ replaced by $H^{k}(\Omega)$ and $H^{\alpha}(\Omega)$ replaced by $H^{k+\alpha}(\Omega)$.
\begin{lemma} Let $k\in \N\cup\{0\}$ and fix an auxiliary parameter $\alpha\in \N$.
There exists $\varepsilon>0$ small enough such that if $||G||_{H^{k+\alpha+2}([-1,1])}(t)\leq \varepsilon$ for all time and $\bar{\rho}(0)\in H^{k+\alpha}(\Omega)$, then the solution of equation (\ref{new_lineal}) satisfies:
$$||\bar{\rho}||_{H^{k}(\Omega)}^2(t)\lesssim \frac{||\bar{\rho}||_{H^{k+\alpha}(\Omega)}^2(0)}{(1+t)^{\tfrac{\alpha}{2}}}.$$
\end{lemma}
\begin{proof}Fix $n\in\N\cup\{0\}$ such that $n\leq k+\alpha$. First, we will prove that $||\bar{\rho}||_{H^n(\Omega)}^2(t)\leq ||\bar{\rho}||_{H^n(\Omega)}^2(0)$. Proceeding as before, after integrating by parts and using the \textit{stream function} $\psi$, we arrive at:
\begin{align*}
\tfrac{1}{2}\partial_t||\bar{\rho}||_{H^n(\Omega)}^2=\int_{\Omega}\partial^n\left[(1-G(y))\psi\right]\partial^n\Delta\psi\,dxdy.
\end{align*}
By Leibniz's rule we have that:
\begin{align*}
\tfrac{1}{2}\partial_t||\bar{\rho}||_{H^n(\Omega)}^2=\int_{\Omega}(1-G(y))\partial^n\psi \partial^n\Delta\psi\,dxdy +\sum_{i=1}^{n} \binom{n}{i} \int_{\Omega}\partial^{i}(1-G(y))\partial^{n-i}\psi\, \partial^n\Delta\psi\,dxdy
\end{align*}
As before, applying the Sobolev embedding $L^{\infty}([-1,1])\hookrightarrow H^1([-1,1])$  and the Poincar\'e inequality we get:
$$\tfrac{1}{2}\partial_t ||\bar{\rho}||_{H^n(\Omega)}^2(t)\leq -\left[1-C\,||G||_{H^{n+2}([-1,1])}(t)\right]\,||\nabla\psi||_{H^n(\Omega)}^2(t).$$
Then, as $||G||_{H^{n+2}([-1,1])}(t)$ is small enough for all time, we get that $||\bar{\rho}||_{H^n(\Omega)}(t)$ is bounded by its initial \nolinebreak data.
Applying this in (\ref{decay_l2_tarek}), we have proved our goal for the case $k=0$. Arguing as we did above when we proved the $L^2(\Omega)\equiv H^{0}(\Omega)$ decay, we can extend the result for general $k\in\N$.
\end{proof}

\subsubsection{Non-Linear Decay}
Next, we will show how this decay of the quasi-linear solutions can be used to establish the stability of the stationary solution $(\rho,\mathbf{u})=(0,0)$ for the general problem (\ref{System_B}). When perturbing around the stationary solution, we get the following system:

\begin{equation}\label{Duhamel}
\begin{cases}
\partial_t\bar{\rho}-(1-\partial_y\tilde{\rho}) u_2&=\quad -\overline{\mathbf{u}\cdot\nabla\bar{\rho}}\\
\partial_t \tilde{\rho}&=\quad -\widetilde{\mathbf{u}\cdot\nabla\bar{\rho}}
\end{cases}
\end{equation}
where $\mathbf{u}=\nabla^{\perp}\psi$ and $\psi$ is the solution of (\ref{Poisson_Homogeneous_Neumann_psi}).

Using Duhamel's formula, with $G(y,t)\equiv \partial_y\tilde{\rho}(y,t)$ small enough in the adequate norm, we write the solution of (\ref{Duhamel}) as:
$$\bar{\rho}(t)=e^{\mathscr{L}(t,0)}\bar{\rho}(0)-\int_{0}^{t}e^{\mathscr{L}(t,s)}\left[\overline{\mathbf{u}\cdot\nabla\bar{\rho}}\right](s)\,ds \qquad \text{and} \qquad \tilde{\rho}(t)= \tilde{\rho}(0)-\int_{0}^{t}\widetilde{\mathbf{u}\cdot\nabla\bar{\rho}}(s)\,ds$$
where $e^{\mathscr{L}(t,s)}$ denotes  the solution operator of the associated quasi-linear problem (\ref{Linear_Psi}) from $s$ to $t$.
Therefore, we have:
$$||\bar{\rho}||_{H^{n}(\Omega)}(t)\lesssim \frac{||\bar{\rho}||_{H^{n+\alpha}(\Omega)}(0)}{(1+t)^{\frac{\alpha}{4}}}+\int_{0}^{t}\frac{1}{(1+(t-s))^{\frac{\alpha}{4}}}\, ||\overline{\mathbf{u}\cdot\nabla\bar{\rho}}||_{H^{n+\alpha}(\Omega)}(s)\,ds.\\$$

\subsection{The Bootstraping}\label{Sec_5.3}
We now demonstrate the bootstrap argument used to prove our goal. The general approach here is a
typical continuity argument that has been used successfully in a plethora of other cases. Theorem (\ref{Energy_Estimate}) tell us that the following estimate holds for $k\geq 6$:
\begin{equation}\label{main_estimate}
\tfrac{1}{2}\partial_t ||\rho||_{H^{k}(\Omega)}^2(t)\leq C\,||\partial u_2||_{L^{\infty}(\Omega)}(t)\, ||\rho||_{H^{k}(\Omega)}^2(t)- \left(1-C\,||\rho||_{H^{k}(\Omega)}(t)\right)\, ||\mathbf{u}||_{H^{k}(\Omega)}^2(t).
\end{equation}

\noindent

\noindent
We need to prove:
\begin{lemma}\label{bootstrap_lemma}
If $||\rho||_{H^{\kappa}(\Omega)}(0)<\varepsilon$ and $||\rho||_{H^{\kappa}(\Omega)}(t)\leq 4\,\varepsilon$ on the interval $[0,T]$ with $0<\varepsilon\leq \nolinebreak \varepsilon_0$ small enough. Then $||\rho||_{H^{\kappa}(\Omega)}(t)$ remains uniformly bounded by $2\,\varepsilon$ on the same interval $[0,T]$.
\end{lemma}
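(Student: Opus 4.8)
The plan is to feed the time decay of $\|\partial u_2\|_{L^\infty(\Omega)}$ into the energy estimate \eqref{main_estimate} and close by a Gronwall argument. First I would exploit the standing bootstrap hypothesis $\|\rho\|_{H^\kappa(\Omega)}(t)\le 4\varepsilon$: choosing $\varepsilon_0$ so small that $4C\varepsilon_0\le 1$, the factor $1-C\|\rho\|_{H^\kappa(\Omega)}(t)$ is nonnegative, so the last term in \eqref{main_estimate} carries the good sign and may simply be dropped. This reduces matters to the differential inequality $\partial_t\|\rho\|_{H^\kappa(\Omega)}^2(t)\le 2C\,\|\partial u_2\|_{L^\infty(\Omega)}(t)\,\|\rho\|_{H^\kappa(\Omega)}^2(t)$, whose integration yields $\|\rho\|_{H^\kappa(\Omega)}^2(t)\le \|\rho\|_{H^\kappa(\Omega)}^2(0)\,\exp\big(2C\int_0^t\|\partial u_2\|_{L^\infty(\Omega)}(s)\,ds\big)$. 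Since $\|\rho\|_{H^\kappa(\Omega)}(0)<\varepsilon$, it suffices to prove a bound of the form $\int_0^\infty\|\partial u_2\|_{L^\infty(\Omega)}(s)\,ds\le C'\varepsilon$, because then for $\varepsilon$ small enough the exponential factor is strictly below $4$, and we recover $\|\rho\|_{H^\kappa(\Omega)}(t)<2\varepsilon$ on $[0,T]$.

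The heart of the matter is therefore the integrability in time of $\|\partial u_2\|_{L^\infty(\Omega)}$. Using the Sobolev embedding $L^\infty(\Omega)\hookrightarrow H^2(\Omega)$ together with the elliptic bound $\|\mathbf{u}\|_{H^{k}(\Omega)}\lesssim\|\bar\rho\|_{H^{k}(\Omega)}$ furnished by Lemma \ref{Solve_Poisson}, I would estimate $\|\partial u_2\|_{L^\infty(\Omega)}(t)\lesssim\|u_2\|_{H^3(\Omega)}(t)\lesssim\|\bar\rho\|_{H^3(\Omega)}(t)$, transferring the whole question to an integrable decay rate for $\bar\rho$ in a fixed low norm. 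Here the nonlinear decay estimates of Section \ref{Sec_5.2} enter: the Duhamel bound for $\|\bar\rho\|_{H^3(\Omega)}$, applied with the loss parameter $\alpha=\tilde\gamma$, produces a linear contribution decaying like $(1+t)^{-(\tilde\gamma-(1/4)^+)}$; this is integrable precisely because $\tilde\gamma\ge 2$ pushes the exponent above $1$. Note that on the bounded domain $\Omega$ one has $H^m(\Omega)\hookrightarrow W^{m,1}(\Omega)$, so the required $W^{3+2\tilde\gamma,1}(\Omega)$ data norm of $\bar\rho(0)$ is controlled by $\|\rho\|_{H^\kappa(\Omega)}(0)$ thanks to $\kappa\ge 3+2\tilde\gamma$.

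The main obstacle is the control of the nonlinear source $\overline{\mathbf{u}\cdot\nabla\bar\rho}+\partial_y\tilde\rho\,u_2$ inside the Duhamel convolution, and showing that this convolution does not spoil the integrable decay. I would bound its $W^{3+2\tilde\gamma,1}(\Omega)$ norm by the product estimate \eqref{Leibniz}, writing each piece as (bounded factor)$\,\times\,$(decaying factor): the term $\partial_y\tilde\rho\,u_2$ pairs $\|\tilde\rho\|_{H^\kappa(\Omega)}\lesssim\varepsilon$ against the decaying $u_2$, while $\overline{\mathbf{u}\cdot\nabla\bar\rho}$ is quadratic in the decaying quantity $\bar\rho$. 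Under the hypothesis $\|\rho\|_{H^\kappa(\Omega)}(t)\le 4\varepsilon$ both are of size $\lesssim\varepsilon\cdot(\text{decaying factor})$, and the derivative budget $\kappa\ge 3+2(\gamma+2\tilde\gamma)$ is exactly what permits these products to be estimated in $W^{3+2\tilde\gamma,1}(\Omega)$ without ever exceeding the available $H^\kappa(\Omega)$ regularity. A standard convolution lemma — the convolution of two weights each decaying faster than $(1+t)^{-1}$ inherits the slower of the two rates — then yields a self-consistent integrable decay for $\|\bar\rho\|_{H^3(\Omega)}$, hence $\int_0^\infty\|\partial u_2\|_{L^\infty(\Omega)}\lesssim\varepsilon$. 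Inserting this into the Gronwall inequality above improves $4\varepsilon$ to $2\varepsilon$ and closes the bootstrap.
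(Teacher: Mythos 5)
Your first stage is exactly the paper's: under the bootstrap hypothesis you choose $\varepsilon_0$ so that $1-C\|\rho\|_{H^{\kappa}(\Omega)}(t)\geq 0$, drop the good term in \eqref{main_estimate}, and Gr\"onwall reduces everything to an integrable-in-time decay of $\|\partial u_2\|_{L^{\infty}(\Omega)}$, obtained through the Duhamel formula and the convolution Lemma \ref{Basic_Lemma}. But there is a genuine gap at the heart of your argument, where you bound the nonlinear source in the Duhamel integral. You claim that under the hypothesis $\|\rho\|_{H^{\kappa}(\Omega)}(t)\leq 4\varepsilon$ the terms $\overline{\mathbf{u}\cdot\nabla\bar\rho}$ and $\partial_y\tilde\rho\,u_2$ are of size $\lesssim \varepsilon\cdot(\text{decaying factor})$ in $W^{3+2\tilde\gamma,1}(\Omega)$. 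This is circular: the bootstrap hypothesis gives only \emph{boundedness} of high norms, never decay, and the ``decaying factor'' you invoke is a decaying bound on $\bar\rho$ or $u_2$ in norms of order roughly $H^{4+2\tilde\gamma}$ --- which is precisely what has not been proven (the low-norm decay of $\|\bar\rho\|_{H^3}$ you are in the middle of establishing does not control these). If you only use the hypothesis, the source is merely $O(\varepsilon^2)$ uniformly in time, and the convolution $\int_0^t (1+(t-s))^{-\delta}\,\varepsilon^2\,ds$ yields a bounded but \emph{non-decaying} contribution; the scheme then fails to produce any integrable decay, let alone close self-consistently.

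The paper fills exactly this hole with a second, nested bootstrap (Lemma \ref{decay_u2_rho}): one runs Duhamel at the intermediate level $H^{3+2\gamma}$ with loss $2\tilde\gamma$, and then uses Gagliardo--Nirenberg interpolation, \eqref{GN_rho}--\eqref{GN_u}, to trade the unavailable high-norm decay for the geometric mean of the top norm $H^{\kappa}$ (bounded by $4\varepsilon$, no decay --- note $\tilde\rho$ never decays) and the intermediate norm whose decay is the unknown. This produces a self-referential inequality in which the unknown quantities enter with power $1/4$, closed by a continuity argument on a maximal interval $[0,T^{\star}]$ together with Lemma \ref{Basic_Lemma}; the numerology $\tilde\gamma>7/4$ arises precisely because the quartered decay rate $\tilde\gamma/2+(1/8)^{-}$ must exceed $1$. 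Two further points of the paper's proof are worth noting. First, the paper tracks $u_2$ directly rather than reducing to $\|\bar\rho\|_{H^3}$ as you do: the semigroup gives $u_2$ a full extra power, $(1+t)^{-(\alpha+(3/4)^{-})}$ versus $(1+t)^{-(\alpha-(1/4)^{+})}$, and this is exploited through Lemma \ref{u1+u2_H^k}, $\|\mathbf{u}\|_{H^n}\leq \|u_2\|_{H^n}^{1/2}\|\bar\rho\|_{H^n}^{1/2}$, which your proposal never uses; your lossier reduction could still be made to close, but only with a larger $\tilde\gamma$, and in any case only \emph{after} supplying the interpolation-plus-continuity mechanism above. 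Second, your phrase ``quadratic in the decaying quantity $\bar\rho$'' is misleading even granting decay: in the product estimate one factor must be placed in a norm near $H^{\kappa}$ that is only bounded, so the source is at best linear, not quadratic, in the decaying quantity --- which is why the self-consistent structure with fractional powers is unavoidable.
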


We will prove Lemma (\ref{bootstrap_lemma}) through a bootstrap argument, where the main ingredient is the estimate (\ref{main_estimate}). We will work with the following bootstrap hypothesis, to assume that $||\rho||_{H^{\kappa}(\Omega)}(t)\leq 4\varepsilon$ on the interval $[0,T]$, where $\kappa$ is big enought and $0<\varepsilon<<1$ such that:
 $$\left(1-C\,||\rho||_{H^{\kappa}(\Omega)}(t)\right)\geq 0 \quad  \text{on} \quad  [0,T].$$

Then, by Gr\"onwall's inequality we have:
$$||\rho||_{H^{\kappa}(\Omega)}(t)\leq ||\rho||_{H^{\kappa}(\Omega)}(0)\, \exp\left(C \int_{0}^{t}||\partial u_2||_{L^{\infty}(\Omega)}(s)\, ds\right) \qquad t\in[0,T].$$
Our goal is to prove that $||\partial u_2||_{L^{\infty}(\Omega)}(t)$ decays on time at an integrable rate. As $L^{\infty}(\Omega)\hookrightarrow H^{2}(\Omega)$ by the Sobolev embedding, it is enough to prove it for $||u_2||_{H^{3}(\Omega)}(t)$. This will allow us to close the energy estimate and finish the proof.

\subsubsection{Integral decay of $||u_2||_{H^{3}(\Omega)}$}
In order to control $||u_2||_{H^{3}(\Omega)}$ in time it is enough to control $||\bar{\rho}||_{H^3(\Omega)}$. We have the following result.
\begin{lemma} Assume that $||\rho||_{H^{\kappa}(\Omega)}(t)\leq 4\,\varepsilon$  for all $t\in[0,T]$ where $\kappa\geq 5+2\gamma$ with $\gamma>4$. Then
$$||\bar{\rho}||_{H^{3}(\Omega)}(t)\lesssim \frac{\varepsilon}{(1+t)^{\frac{\gamma}{4}}} \qquad \text{for all}\quad  t\in[0,T].$$
\end{lemma}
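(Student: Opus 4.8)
The plan is to feed the Duhamel representation for $u_2$ established at the end of Section~\ref{Sec_5.2}, specialized to $n=3$ and $\alpha=\gamma$, into the bootstrap hypothesis. That formula reads
$$||u_2||_{H^{3}(\Omega)}(t)\lesssim \frac{||\bar{\rho}||_{W^{3+2\gamma,1}(\Omega)}(0)}{(1+t)^{\gamma+(3/4)^{-}}}+\int_{0}^{t}\frac{||\overline{\mathbf{u}\cdot\nabla\bar{\rho}}+\partial_y\tilde{\rho}\,u_2||_{W^{3+2\gamma,1}(\Omega)}(s)}{(1+(t-s))^{\gamma+(3/4)^{-}}}\,ds ,$$
so I would control the two contributions separately. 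For the linear piece I use that $L^2(\Omega)\hookrightarrow L^1(\Omega)$ on the bounded domain gives $||f||_{W^{m,1}(\Omega)}\lesssim||f||_{H^{m}(\Omega)}$; hence $||\bar{\rho}||_{W^{3+2\gamma,1}(\Omega)}(0)\lesssim ||\rho||_{H^{3+2\gamma}(\Omega)}(0)\le ||\rho||_{H^{\kappa}(\Omega)}(0)<\varepsilon$ because $3+2\gamma\le\kappa$, and this term already has the desired form.

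The crux is the time integral, and the main obstacle is to prove that the nonlinear forcing decays integrably and no slower than the linear rate, i.e. $||\mathrm{NL}||_{W^{3+2\gamma,1}(\Omega)}(s)\lesssim \varepsilon^{2}(1+s)^{-b}$ with $b>1$ and $b\ge\gamma+(3/4)^{-}$. I would bound each quadratic term by a Leibniz expansion followed by Cauchy--Schwarz in the $L^{2}\times L^{2}\to L^{1}$ form, schematically $||fg||_{W^{m,1}(\Omega)}\lesssim ||f||_{H^{m}(\Omega)}||g||_{H^{m}(\Omega)}$, and then distribute the decay. For $\overline{\mathbf{u}\cdot\nabla\bar{\rho}}$ both factors are controlled by $\bar{\rho}$ through the Biot--Savart bound $||\mathbf{u}||_{H^{s}(\Omega)}\lesssim||\bar{\rho}||_{H^{s}(\Omega)}$ (Corollary to Lemma~\ref{Solve_Poisson}); I would let one factor carry the decaying norm $||\bar{\rho}||_{H^{3+2\gamma}(\Omega)}(s)\lesssim\varepsilon(1+s)^{-\tilde\gamma+(1/4)^{+}}$ furnished by the companion decay estimate (conclusion~(1) of Theorem~\ref{main_thm}, obtained in tandem within the coupled bootstrap), and the other a slightly higher norm $\le||\rho||_{H^{\kappa}(\Omega)}$, interpolating between $H^{3+2\gamma}(\Omega)$ and $H^{\kappa}(\Omega)$ so as to retain almost all of its decay. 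This renders $\overline{\mathbf{u}\cdot\nabla\bar{\rho}}$ quadratically small in the decaying norm, hence amply integrable since $\tilde\gamma>7/4$.

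The genuinely delicate term is $\partial_y\tilde{\rho}\,u_2$, because $\tilde{\rho}$ only satisfies the non-decaying bound $||\tilde{\rho}||_{H^{\kappa}(\Omega)}\le 2\varepsilon$, so \emph{all} the decay must be extracted from $u_2$, and at the regularity level $H^{3+2\gamma}(\Omega)$ rather than merely $H^{3}(\Omega)$. I would obtain this from the linear decay estimates of Section~\ref{Sec_5.2} applied to $u_2$ at order $3+2\gamma$, which costs additional derivatives beyond $3+2\gamma$; tracking these losses through the coupled estimates is exactly what fixes the budget $\kappa\ge 3+2(\gamma+2\tilde\gamma)$ and the thresholds $\gamma>1/4$ and $\tilde\gamma>7/4$. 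Granting these decays, the forcing obeys $||\mathrm{NL}||_{W^{3+2\gamma,1}(\Omega)}(s)\lesssim\varepsilon^{2}(1+s)^{-b}$ with $b>1$ and $b\ge\gamma+(3/4)^{-}$. I would then close with the elementary convolution estimate: splitting $\int_0^t$ at $s=t/2$ and using $b>1$ on $[0,t/2]$ together with $\gamma+(3/4)^{-}>1$ (here $\gamma>1/4$ is used) on $[t/2,t]$, one gets $\int_{0}^{t}(1+(t-s))^{-\gamma-(3/4)^{-}}(1+s)^{-b}\,ds\lesssim(1+t)^{-\min(\gamma+(3/4)^{-},\,b)}=(1+t)^{-\gamma-(3/4)^{-}}$. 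Combining this with the linear estimate yields $||u_2||_{H^{3}(\Omega)}(t)\lesssim \varepsilon(1+t)^{-\gamma-(3/4)^{-}}$, which absorbs the quadratic constant for $\varepsilon$ small and establishes the claim.
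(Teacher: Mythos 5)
Your outer skeleton is exactly the paper's: Duhamel at level $H^3$ with $\alpha=\gamma$, the product estimate $||fg||_{W^{3+2\gamma,1}(\Omega)}\lesssim||f||_{H^{3+2\gamma}(\Omega)}||g||_{H^{4+2\gamma}(\Omega)}$ on the forcing, control of the linear term by $||\rho||_{H^{\kappa}(\Omega)}(0)<\varepsilon$ via Cauchy--Schwarz on the bounded domain, and the convolution lemma to close (your split-at-$t/2$ version with the exponent $\min(\gamma+(3/4)^{-},b)$ is in fact a slightly more careful statement than the paper's Lemma \ref{Basic_Lemma}, which is applied there with forcing rate $\tilde{\gamma}+(1/4)^{-}>1$). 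You also correctly identify the delicate point: $\tilde{\rho}$ does not decay, so in $\partial_y\tilde{\rho}\,u_2$ all decay must come from $u_2$ measured in $H^{3+2\gamma}(\Omega)$, and you correctly read off the derivative budget $\kappa\geq 3+2(\gamma+2\tilde{\gamma})$.

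However, there is a genuine gap precisely at what you call the crux: you never establish the intermediate decays $||\bar{\rho}||_{H^{3+2\gamma}(\Omega)}(t)\lesssim\varepsilon(1+t)^{-\tilde{\gamma}+(1/4)^{+}}$ and $||u_2||_{H^{3+2\gamma}(\Omega)}(t)\lesssim\varepsilon(1+t)^{-\tilde{\gamma}-(3/4)^{-}}$, and the two justifications you offer do not work as stated. Citing conclusion (1) of Theorem \ref{main_thm} is circular, since that theorem is what the present lemma is proving; and ``the linear decay estimates of Section \ref{Sec_5.2} applied to $u_2$ at order $3+2\gamma$'' do not apply to the nonlinear solution --- applying Duhamel at order $3+2\gamma$ produces a forcing controlled by norms at order $3+2(\gamma+\tilde{\gamma})$, whose decay is again unknown, so your plan regresses without a terminating mechanism. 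The paper closes this loop in the nested Lemma \ref{decay_u2_rho}: after Duhamel at order $3+2\gamma$, it splits the velocity by Lemma \ref{u1+u2_H^k} as $||\mathbf{u}||_{H^{n}(\Omega)}\leq||u_2||_{H^{n}(\Omega)}^{1/2}||\bar{\rho}||_{H^{n}(\Omega)}^{1/2}$ (a tool you never invoke, and which is what produces the combined forcing rate $\tilde{\gamma}+(1/4)^{-}$), then uses the Gagliardo--Nirenberg interpolations \eqref{GN_rho}--\eqref{GN_u} to bound the order-$3+2(\gamma+\tilde{\gamma})$ norms by $||\cdot||_{H^{3+2(\gamma+2\tilde{\gamma})}(\Omega)}^{1/2}||\cdot||_{H^{3+2\gamma}(\Omega)}^{1/2}$, where the top norms are $\leq||\rho||_{H^{\kappa}(\Omega)}\leq 4\varepsilon$ by the bootstrap hypothesis alone --- this is where $\kappa\geq 3+2(\gamma+2\tilde{\gamma})$ actually enters. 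The result is a \emph{closed} integral inequality in the unknown pair $\bigl(||\bar{\rho}||_{H^{3+2\gamma}(\Omega)},||u_2||_{H^{3+2\gamma}(\Omega)}\bigr)$, entering the right-hand side only at power $1/4+1/4$, which is then resolved by a secondary continuity argument on a maximal interval $[0,T^{\star}]$ together with Lemma \ref{Basic_Lemma}, and extended to $[0,T]$. Your interpolation ``between $H^{3+2\gamma}(\Omega)$ and $H^{\kappa}(\Omega)$ so as to retain almost all of its decay'' is the right spirit but is deployed only on the transport term and never assembled into such a self-consistent inequality; without that mechanism the proposal remains open exactly where the paper does its real work.
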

\begin{proof} By assumption $\partial_y\tilde{\rho}(t)$ is small in $H^{\kappa-1}(\Omega)$ for all $t\in[0,T]$. This implies that $e^{\mathscr{L}(t,s)}$ has nice decay properties for $s\leq t$ and $t\in[0,T]$ in $H^{3}(\Omega)$ if $\kappa\geq 6+\gamma$. Hence, Duhamel's formula give us:
$$||\bar{\rho}||_{H^{3}(\Omega)}(t)\lesssim \frac{||\bar{\rho}||_{H^{3+\gamma}(\Omega)}(0)}{(1+t)^{\frac{\gamma}{4}}}+\int_{0}^{t}\frac{1}{(1+(t-s))^{\frac{\gamma}{4}}}\, ||\overline{\mathbf{u}\cdot\nabla\bar{\rho}}||_{H^{3+ \gamma}(\Omega)}(s)\,ds$$
and we have that:
\begin{align*}
||\overline{\mathbf{u}\cdot\nabla\bar{\rho}}||_{H^{3+\gamma}}&\leq ||\mathbf{u}\cdot\nabla\bar{\rho}||_{H^{3+\gamma}}\lesssim ||\mathbf{u}||_{H^{3+\gamma}(\Omega)}\,||\bar{\rho}||_{H^{4+\gamma}(\Omega)}\lesssim ||\bar{\rho}||_{H^{4+\gamma}(\Omega)}^2.
\end{align*}
Hence
$$||\bar{\rho}||_{H^{3}(\Omega)}(t)\lesssim \frac{||\bar{\rho}||_{H^{3+\gamma}(\Omega)}(0)}{(1+t)^{\frac{\gamma}{4}}}+\int_{0}^{t}\frac{1}{(1+(t-s))^{\frac{\gamma}{4}}}\, ||\bar{\rho}||_{H^{4+ \gamma}(\Omega)}^2(s)\,ds$$
and, in conclusion, we need a control in time of $||\bar{\rho}||_{H^{4+\gamma}(\Omega)}$.

However, due to the well-known Gagliardo-Nirenberg interpolation inequalities:
$$||D^{j}f||_{L^2(\Omega)}\leq C\, ||D^{2\,j}f||_{L^2(\Omega)}^{1/2}\, ||f||_{L^2(\Omega)}^{1/2}+\tilde{C}\,||f||_{L^2(\Omega)}$$
we get
\begin{equation}\label{GN_rho}
||\bar{\rho}||_{H^{4+\gamma}(\Omega)}\lesssim  ||\bar{\rho}||_{H^{3+2\left(1+\gamma\right)}(\Omega)}^{1/2}\, ||\bar{\rho}||_{H^{3}(\Omega)}^{1/2}
\end{equation}
\noindent
Therefore, if we apply (\ref{GN_rho}) in the previous inequalities, we get:
\begin{align*}
\hspace{-0.5 cm}||\bar{\rho}||_{H^{3}(\Omega)}(t)&\lesssim \frac{||\bar{\rho}||_{H^{3+\gamma}(\Omega)}(0)}{(1+t)^{\frac{\gamma}{4}}}+\int_{0}^{t}\frac{ ||\bar{\rho}||_{H^{\kappa}(\Omega)}(s)}{(1+(t-s))^{\frac{\gamma}{4}}}\,||\bar{\rho}||_{H^{3}(\Omega)}(s)\,ds
\end{align*}
where, we have defined $\kappa\in\N$ so that $\kappa\geq \max\{5+2\gamma,6+\gamma\}$.\\

\noindent
By hypothesis, we have that $||\rho||_{H^{\kappa}(\Omega)}(t)\leq 4\varepsilon$ on the interval $[0,T]$. Then, we obtain that:
\begin{align*}
\hspace{-0.5 cm}||\bar{\rho}||_{H^{3}(\Omega)}(t)&\leq \frac{C\varepsilon}{(1+t)^{\frac{\gamma}{4}}}+\int_{0}^{t}\frac{C\varepsilon}{(1+(t-s))^{\frac{\gamma}{4}}}\,||\bar{\rho}||_{H^{3}(\Omega)}(s)\,ds
\end{align*}
\noindent
In particular, there exist $0<T^{\star}(C)\leq T$ such that for $t\in[0,T^{\star}(C)]$ we have:
$$||\bar{\rho}||_{H^{3}}(t)\leq 4 \,\frac{C\,\varepsilon}{(1+t)^{\frac{\gamma}{4}}}.$$

\noindent
The following basic  lemma is stated without proof (for a proof see \cite[p.~584]{Elgindi}).
\begin{lemma}\label{Basic_Lemma}
Let $\delta,q>0$, then:
$$\int_{0}^{t}\frac{ds}{(1+(t-s))^{\delta}\,(1+s)^{1+q}}\leq \frac{\mathcal{C}_{\delta,q}}{(1+t)^{\min\{\delta,1+q\}}}$$
\end{lemma}
\noindent
If we restrict to $0\leq t\leq T^{\star}(C)$ and we apply  the previous Lemma (\ref{Basic_Lemma}), we have:
\begin{align*}
||\bar{\rho}||_{H^{3}(\Omega)}(t)&\leq \frac{C\,\varepsilon}{(1+t)^{\frac{\gamma}{4}}}+\int_{0}^{t}\frac{ C\,\varepsilon}{(1+(t-s))^{\frac{\gamma}{4}}}\,\frac{4\,C\,\varepsilon}{(1+s)^{\frac{\gamma}{4}}}\,ds\\
&\leq \frac{C\,\varepsilon}{(1+t)^{\frac{\gamma}{4}}}+\frac{4\,\tilde{C}\,\varepsilon^2}{(1+t)^{\frac{\gamma}{4}}}
\end{align*}
The last term in the expression above is quadratic in $\varepsilon$, it is enough to find $0<\epsilon<<1$ small enough so that
$$||\bar{\rho}||_{H^{3}(\Omega)}(t)\leq 2 \,\frac{C\,\varepsilon}{(1+t)^{\frac{\gamma}{4}}}$$
for all $t\in[0,T^{\star}(C)]$ and, by continuity, for all $t\in[0,T]$.\\
\end{proof}


\noindent
So, with $\gamma>4$ we have proved the integrable decay of $u_2$, then we are able to close our energy estimate.
We are now in the position to show how the bootstrap can be closed. This is merely a matter of collecting the conditions established above and showing that they can indeed be satisfied.

In conclusion, if $||\rho||_{H^{\kappa}(\Omega)}(t)\leq 4\,\varepsilon$ for all $t\in[0,T]$ we have that
\begin{align*}
||\rho||_{H^{\kappa}(\Omega)}(t)&\leq ||\rho||_{H^{\kappa}(\Omega)}(0)\, \exp\left(C\int_{0}^{t}||\partial u_2||_{L^{\infty}(\Omega)}(s)\, ds\right) \\
&\leq\varepsilon\,\exp \left(C\int_{0}^{t}\frac{\tilde{C}\varepsilon}{(1+s)^{\frac{\gamma}{4}}}\,ds\right)\leq \varepsilon \exp\left(C^{\star}\varepsilon\right)
\end{align*}
and $||\rho||_{H^{\kappa}(\Omega)}(t)\leq 2\,\varepsilon$ for all $t\in[0,T]$ if we consider $\varepsilon$ small enough, which allows us to prolong the solution and then repeat the argument for all time.\\

\noindent
\textbf{Funding:} The authors are supported by Spanish National Research Project MTM2014-59488-P and ICMAT Severo Ochoa projects SEV-2011-0087 and SEV-2015-556.  AC was partially supported by the ERC grant 307179-GFTIPFD, DC was partially supported by the ERC Advanced Grant 788250 and DL was supported by La Caixa-Severo Ochoa grant.\\

\noindent
\textbf{Acknowledgements:} The authors acknowledges helpful conversations with Tarek M. Elgindi.
The authors would like to thank the anonymous referee for their insightful comments and suggestions. \\

\noindent
\textbf{Conflict of Interest}: The authors declare that they have no conflict of interest.

\bibliography{bibliografia}
\bibliographystyle{plain}

\Addresses

\end{document}